\documentclass{article}
	\usepackage[margin=25truemm]{geometry}
	\usepackage{amsthm}
	\newtheorem{theorem}{Theorem}
	\newtheorem{lemma}[theorem]{Lemma}
	\newtheorem{proposition}[theorem]{Proposition}
	\newtheorem{corollary}[theorem]{Corollary}
	\theoremstyle{definition}
	\newtheorem{definition}{Definition}

	\usepackage{amsmath,amssymb}
	\allowdisplaybreaks[3]
	\usepackage{mathtools}
	\mathtoolsset{showonlyrefs=true,showmanualtags=true}

	\renewcommand{\tilde}{\widetilde}
	\renewcommand{\hat}{\widehat}

	\DeclarePairedDelimiter{\HKparen}{(}{)}
	\DeclarePairedDelimiter{\HKbrack}{[}{]}
	\DeclarePairedDelimiter{\HKbrace}{\{}{\}}
	\DeclarePairedDelimiter{\HKangle}{\langle}{\rangle}
	\DeclarePairedDelimiter{\HKvert}{\lvert}{\rvert}
	\DeclarePairedDelimiter{\HKVert}{\lVert}{\rVert}

	\newcommand{\prn}{\HKparen}
	\newcommand{\sqb}{\HKbrack}
	\newcommand{\crb}{\HKbrace}
	\newcommand{\agb}{\HKangle}
	\newcommand{\abs}{\HKvert}
	\newcommand{\norm}{\HKVert}
	\DeclarePairedDelimiter{\lopen}{(}{]}
	
	\DeclarePairedDelimiterX{\setdef}[2]{\{}{\}}{#1\,\delimsize|\,#2}
	\makeatletter
	\newcommand{\DeclareDerivative}[2]{
		\def#1{\@ifnextchar[
			{\HK@derivative@a{#2}}
			{\HK@derivative@b{#2}}
		}}
	\def\HK@derivative@a#1[#2]#3#4{\frac{#1^{#2}#3}{#1 #4^{#2}}}
	\def\HK@derivative@b#1#2#3{\frac{#1 #2}{#1 #3}}
	\makeatother
	\DeclareDerivative{\dv}{\ddsymb}
	\DeclareDerivative{\pdv}{\partial}
	
	\newcommand{\bbR}{\mathbb{R}}
	\newcommand{\calA}{\mathcal{A}}
	\DeclareMathOperator{\supp}{supp}
	\DeclareMathOperator{\tr}{tr}

	\DeclareMathOperator{\HKdiv}{div}
	
	\renewcommand{\div}{\HKdiv}
	\newcommand{\ce}{e}
	\newcommand{\ddsymb}{d}
	\newcommand{\dd}{\mathop{}\!\ddsymb}	
	\newcommand{\iu}{i}
	\newcommand{\sub}[1]{_{\mathrm{#1}}}
	\newcommand{\laplacian}{\Delta}
	\newcommand{\fourier}{\mathcal{F}}
	\newcommand{\inv}{^{-1}}				
	
	\newcommand{\dummydot}{\>\cdot\>}
	
	\newcommand{\trp}[1]{{#1}^{\intercal}}
	\newcommand{\fractrans}{(\abs{\partial_t+y\cdot\nabla_x}^{6}+\abs{\nabla_x}^4)^{1/12}}
	\newcommand{\bddlin}{\mathbf{B}}

	\usepackage{hyperref}
\begin{document}

\title{Anisotropic maximal $L^p$-regularity estimates for a hypoelliptic operator}
\author{Kazuhiro Hirao\thanks{Department of Mathematics, 
	Graduate School of Science,
	Kyoto University, Kyoto, 606-8502, Japan\\
	E-mail address: hirao.kazuhiro.54s@st.kyoto-u.ac.jp}}
\date{}
\maketitle
\begin{abstract}
	We consider the maximal regularity 
	of a specific Vlasov-Fokker-Planck equation $\mathcal{A}u=f$ in
	the Euclidean space.
	The operator $\mathcal{A}=\Delta_{y}u-y\cdot \nabla_x{u}$ 
	is an example of the Ornstein-Uhlenbeck operators.
	We prove the existence of a solution 
	that satisfies the anisotropic maximal regularity estimates.
	To prove this we also show a similar estimates 
	and a weak (1, 1) estimate for $L=\partial_t-\mathcal{A}$, 
	which is of independent interest.
	Moreover, we show a maximal regularity estimate
	containing a fractional transport operator.
	These results rely on the pointwise estimates of 
	the fundamental solution of $L$.
\end{abstract}
\section{Introduction}
\subsection{Background and main results}
	Let us consider the Ornstein-Uhlenbeck operator in $\bbR^N$:
	\begin{equation}
		\calA=\div(A\nabla)+\agb{x,B\nabla}
		=\sum_{i,j=1}^N a_{ij}\partial_{x_ix_j}^2
		+\sum_{i,j=1}^N b_{ij}x_i\partial_{x_j},
	\end{equation}
	where $A=(a_{ij})$ is an $N\times N$ constant matrix
	that is symmetric positive semi-definite and
	$B=(b_{ij})$ is an $N\times N$ constant matrix.
	The operator $\calA$ is the infinitesimal generator of
	the Uhlenbeck-Ornstein semigroup, which is
	the Markov semigroup associated to a stochastic
	differential equation that describes a random motion
	of a particle in a fluid.

	H\"{o}rmander in \cite{Hormander1967} studied when
	$L\coloneqq \partial_t-\calA$ has hypoellipticity.
	Roughly speaking, a differential operator $P$ is called hypoelliptic
	if the smoothness of $P{u}$ implies the smoothness of $u$ itself.
	In \cite{Hormander1967},
	H\"{o}rmander showed that the operator $L$ is hypoelliptic
	if and only if
	\begin{equation}
		\int_0^t \exp(-s\trp{B}) A \exp(-s{B})\dd{s}>0
	\end{equation}
	for all $t>0$. Here, $\trp{B}$ denotes the transpose of $B$.
	Lanconelli and Polidoro~\cite{LanconelliPolidoro1994} proved that
	if $L$ is hypoelliptic,
	then we can set
	\begin{equation}
		A=\begin{pmatrix}
			A_0 & 0 \\ 0 & 0
		\end{pmatrix}
	\end{equation}
	with a $p_0\times p_0$ constant matrix $A_0$ $(p_0\le N)$ that is
	symmetric and positive definite
	and
	\begin{equation}
		B=\begin{pmatrix}
			*		& B_1		& 0		 & \dots	& 0		\\
			*  		& *			& B_2	 & \ddots	& \vdots\\
			\vdots	&  			& \ddots & \ddots 	& 0 	\\
			\vdots 	& 			& 		 & \ddots	& B_r	\\
			*		& \dots		& \dots	 & \dots	& *
		\end{pmatrix}
	\end{equation}
	where $B_j$ is a $p_{j-1}\times p_j$ block with
	rank $p_j$, $p_0\ge p_1\ge \dots \ge p_r\ge 1$ and
	$p_0+p_1+\dotsm+p_r=N$ for some basis of $\bbR^N$.
	We say that $L$ is degenerate if $p_0<N$ and
	is non-degenerate otherwise.

	In this paper,
	we study a special case of the Ornstein-Uhlenbeck operators,
	$N=2d$ and
	\begin{equation}
		A=
		\begin{pmatrix}
			0 & 0 \\ 0 & I
		\end{pmatrix},
		\qquad
		\text{and}
		\qquad
		B=
		\begin{pmatrix}
			0 & -I \\ 0 & 0
		\end{pmatrix}.
	\end{equation}
	Here $I$ is the $d\times d$ identity matrix.
	Then $L$ is hypoelliptic and degenerate.
	To focus on this case, we introduce the notations
	\begin{gather}
		x=(x_1,\dotsc,x_d),\qquad y=(y_1,\dotsc,y_d)=(x_{d+1},\dotsc,x_{2d}),\\
		\nabla_x=(\partial_{x_1},\dotsc,\partial_{x_d}),\qquad
		\nabla_y=(\partial_{y_1},\dotsc,\partial_{y_d}),\qquad
		\laplacian_y=\nabla_y\cdot \nabla_y.
	\end{gather}
	Then $\calA$ is written as
	\begin{equation}
		\calA{u}=\laplacian_{y}u-y\cdot \nabla_x{u}
		=\sum_{i=1}^d \partial_{y_i}^2u-\sum_{i=1}^dy_i\partial_{x_i}u.
	\end{equation}

	\begin{definition}
		(1)
		For given $f\in L\sub{loc}^1(\bbR^{2d}_{x,y})$,
		a function $u\in L\sub{loc}^1(\bbR^{2d}_{x,y})$ is called
		a weak solution of $\calA{u}=f$ in $\bbR^{2d}$ if
		\begin{equation}
			\agb{u,\laplacian_y\phi+y\cdot \nabla_x\phi}
			=\agb{f,\phi}
		\end{equation}
		holds for any test function
		$\phi\in C\sub{c}^{\infty}(\bbR^{2d}_{x,y})$.
		Here $\agb{f,g}=\int_{\bbR^{2d}}f(x,y)g(x,y)\dd{x}\dd{y}$.

		\noindent
		(2)
		For given $f\in L\sub{loc}^1(\bbR^{2d}_{x,y}\times\bbR_t)$,
		a function $u\in C(\bbR_t;L\sub{loc}^1(\bbR^{2d}_{x,y}))$
		is called a weak solution of $L{u}=f$ in $\bbR_{x,y}^{2d}\times\bbR_t$
		if
		\begin{equation}
			\agb{u(t),\phi(t)}
			+\int_s^t \agb{u,-\partial_{\tau}\phi-\laplacian_y{\phi}
				-y\cdot \nabla_x\phi}\dd{\tau}
			=\agb{u(s),\phi(s)}+\int_s^t\agb{f,\phi}\dd{\tau}
		\end{equation}
		holds for any $s,t\in \bbR$ with $s<t$ and
		any test function
		$\phi\in C\sub{c}^{\infty}(\bbR^{2d}_{x,y}\times\bbR_t)$.
	\end{definition}

	\begin{definition}
		For $s\in (0,2)$ and $f\in L^p(\bbR^d)$ with $1\le p<\infty$,
		$\abs{\nabla_x}^{s} f\in L^p(\bbR^d)$ means that
		\begin{equation}
			\abs{\nabla_x}^{s} f(x)
			=\frac{2^{s/2}\Gamma(\frac{d+s}{2})}{\pi^{s/2}\Gamma(-\frac{s}{2})}
			\lim _{r\to 0}
			\int_{\abs{y}>r}\frac{f(x+y)-f(x)}{\abs{y}^{d+s}}\dd{y}
			\qquad \text{in $L^p(\bbR^d)$}.
		\end{equation}
		This is equivalent to the definition by the Fourier transform
		if $1\le p\le 2$.
		See Kwa\'{s}incki~\cite{Kwasincki2017}, for example.
	\end{definition}

	\begin{definition}
		For $f\in L^2_{x,t}(\bbR^{d}\times\bbR)$ and 
		$y\in \bbR^d$, we write
		\begin{equation}
			(\abs{\partial_t+y\cdot\nabla_x}^{6}+\abs{\nabla_x}^4)^{1/12}f
			=\fourier\inv_{\xi,\tau}
			(\abs{\tau+y\cdot \xi}^6+\abs{\xi}^4)^{1/12} \fourier_{x,t}f.
		\end{equation}
	\end{definition}

	For $p,q\ge 1$,
	the $L_y^qL_{x,t}^p$-norm of a function $f$ is defined by
	\begin{equation}
		\norm{f}_{L_y^qL_{x,t}^p}
		=\norm*{\norm{f}_{L_{x,t}^p}}_{L_y^q}.
	\end{equation}
	The $L_y^qL_{x}^p$-norm is also defined in the similar way.

	As a main result of this paper,
	we shall prove the following maximal regularity estimates
	in the anisotropic $L^p$-spaces.
	\begin{theorem}\label{thm:main result A}
		Let $p,q\in (1,\infty)$.
		Assume that
		$f\in C\sub{c}^{\infty}(\bbR^{2d})$.
		Then there exists a weak solution of $\calA{u}=f$
		satisfying
		\begin{equation}\label{eq:main result A}
			\norm{\laplacian_y{u}}_{L_y^{q}L_{x}^{p}}
			+\norm{\abs{\nabla_x}^{2/3}u}_{L_y^{q}L_{x}^{p}}
			+\norm{\abs{\nabla_x}^{1/3}\nabla_y u}_{L_y^{q}L_{x}^{p}}
			\le C\norm{f}_{L_y^{q}L_{x}^{p}}
		\end{equation}
		for some constant $C=C(p,q,d)>0$.
	\end{theorem}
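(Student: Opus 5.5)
The plan is to pass through the time-dependent operator $L=\partial_t-\calA$, as the abstract indicates. Let $\Gamma(x,y,t;\xi,\eta,s)$ be the explicit Kolmogorov fundamental solution of $L$. It is Gaussian, and after the change of variables $(x,y)\mapsto(x-\xi+(t-s)\eta,\,y-\eta)$ it depends only on the difference of the points. For $\calA u=f$ I will take the candidate solution
\begin{equation}
	u(x,y)=-\int_0^\infty\!\!\int_{\bbR^{2d}}\Gamma(x,y,t;\xi,\eta,0)f(\xi,\eta)\dd{\xi}\dd{\eta}\dd{t},
\end{equation}
that is, $u=-\int_0^\infty e^{t\calA}f\dd{t}$.

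Convergence of this integral is delicate in low dimensions. Since $\norm{e^{t\calA}f}_\infty\lesssim t^{-2d}$ (the homogeneous dimension is $4d$ with time weight $2$), the integral converges for every $d\ge1$. For the derivative terms in \eqref{eq:main result A} one gains extra decay, $t^{-1}$ or $t^{-1/2}$ respectively, so all the derivatives of $u$ appearing on the left are well defined. The weak formulation then follows by integrating $\partial_t e^{t\calA}f=\calA e^{t\calA}f$ in $t$ and using $e^{t\calA}f\to0$ as $t\to\infty$ locally. To be safe, I could instead define $u$ as the limit of the damped solutions $u_\lambda=-\int_0^\infty e^{-\lambda t}e^{t\calA}f\dd{t}$ and pass to the limit $\lambda\to0$ using the uniform estimates below.

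The estimates themselves come from a transference argument from the time-dependent problem. Fix a cutoff $\chi\in C\sub{c}^\infty(\bbR_t)$ and set $w(x,y,t)=\chi(t/T)u(x,y)$. Then
\begin{equation}
	Lw=T^{-1}\chi'(t/T)u-\chi(t/T)f.
\end{equation}
The key input is the maximal regularity estimate for $L$ in $L_y^qL_{x,t}^p$, namely
\begin{equation}
	\norm{\laplacian_y w}+\norm{\abs{\nabla_x}^{2/3}w}+\norm{\abs{\nabla_x}^{1/3}\nabla_y w}\le C\norm{Lw}_{L_y^qL_{x,t}^p}.
\end{equation}
I would prove this first, since it is the main obstacle. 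Each operator in it is convolution, in the group law of the Kolmogorov (Lie) group, with a kernel $K=D\Gamma$ homogeneous of degree $-4d-2$ with respect to the dilations $(x,y,t)\mapsto(r^3x,ry,r^2t)$. This makes each operator a Calder\'on--Zygmund operator on the corresponding space of homogeneous type. The steps are:
\begin{enumerate}
	\item Prove $L^2$ boundedness via Fourier transform in $(x,t)$ together with an explicit computation in $y$. Alternatively, use Plancherel on the symbol of $\abs{\nabla_x}^{2/3}$ combined with the Hörmander-type hypoelliptic bound.
	\item Establish the pointwise H\"ormander smoothness bounds $\abs{K(z)-K(z')}\lesssim \rho(z'^{-1}z)\rho(z)^{-4d-3}$ directly from the explicit Gaussian.
	\item Deduce a weak (1,1) estimate, and hence $L^p$ boundedness for $p=q$.
	\item Obtain mixed norms $q\ne p$ by vector-valued (Banach-valued, $L^p_{x,t}$-valued) Calder\'on--Zygmund theory in the $y$ variable. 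Since the kernel is not translation invariant in $y$ alone, this means treating the operator as $\bddlin(L^p_{x,t})$-valued and checking the smoothness in $y,\eta$ of the operator norms from the pointwise bounds; Rubio de Francia extrapolation with weights in $y$ is a fallback.
\end{enumerate}

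Finally, apply the $L$-estimate to $w$ and divide by $T^{1/p}$. The left side becomes $\norm{\chi}_p$ times the desired $L_y^qL_x^p$ norms of $u$. On the right, the $\chi f$ term gives $C\norm{\chi}_p\norm{f}$, while the error term is $T^{-1}\norm{\chi'}_p\norm{u}_{L_y^qL_x^p}$. If $u\in L_y^qL_x^p$, letting $T\to\infty$ gives \eqref{eq:main result A}. When $u$ itself fails to be in this space, which can happen for small $p,q$, I would instead run the argument with $u_\lambda$. There the error term is handled by $\norm{u_\lambda}\le\lambda^{-1}\norm{f}$, so it is bounded by $(\lambda T)^{-1}$, and choosing $T=\lambda^{-2}$ makes it vanish. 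The resulting uniform bounds on the derivatives of $u_\lambda$ then pass to $u$ by weak-$*$ compactness.
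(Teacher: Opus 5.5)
The gap is in your Step~2, and it affects two of the three quantities in \eqref{eq:main result A}. Homogeneity of degree $-4d-2$ yields a Calder\'on--Zygmund kernel only if the kernel is also smooth away from the origin. The kernels $\gamma_2=\abs{\nabla_x}^{2/3}\gamma$ and $\gamma_3=\abs{\nabla_x}^{1/3}\nabla_y\gamma$ are not: they are singular along the whole set $\{y=0,\ t=0\}$. Completing the square, $\gamma(\cdot,y,t)$ is a Gaussian of width $t^{3/2}$ centred at $ty/2$, and its $x$-integral is the heat kernel $(4\pi t)^{-d/2}\exp(-\abs{y}^2/4t)$. The nonlocal operator $\abs{\nabla_x}^{2/3}$ sees this mass from afar. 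Hence for $\abs{x}\sim1$ and $(y,t)\to(0,0^+)$ one gets $\gamma_2(x,y,t)\approx c_d\abs{x}^{-d-2/3}(4\pi t)^{-d/2}\exp(-\abs{y}^2/4t)$ with $c_d\neq0$, which is unbounded.

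So even the size bound $\norm{z}^{-4d-2}$ fails, and the pointwise smoothness bound you plan to read off from the Gaussian is false. Only $\laplacian_y\Gamma$ (and $\nabla_x\Gamma$) behave as you assume. The true bounds are anisotropic, for example $\abs{\partial_{x_i}^l\partial_{y_j}^m\partial_t^n\gamma_2(z)}\lesssim\norm{z}^{-3d-2-3l}(\abs{y}+\abs{t}^{1/2})^{-d-m-2n}$ for $l+m+n\le1$. The analogue for $\gamma_3$ has $3d+1$ and $d+1$ in place of $3d+2$ and $d$. These bounds have to be proved on the Fourier side, as in Lemma~\ref{lem:decay of s-derivative}.

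With these bounds, the mean value theorem in $y$ or $t$ produces factors such as $(\abs{y}+\abs{t}^{1/2})^{-d-2}$. These are not integrable near $(y,t)=0$ in $\bbR^{d+1}$, so the integral H\"ormander condition of Theorem~\ref{thm:SI Lp} must be checked by hand:
\begin{enumerate}
\item split according to whether $\abs{x}^{1/3}$ dominates $\norm{z}$;
\item treat the $x$-, $y$- and $t$-increments separately;
\item for the $y$-increment (likewise $t$), use $\abs{a-b}\le\abs{a-b}^{\sigma}(\abs{a}+\abs{b})^{1-\sigma}$ on $\{\abs{y}\ge2\abs{y'}\}$, and bound $a$ and $b$ separately on the small complementary region.
\end{enumerate}
Your Step~4 operator-norm bounds must likewise be derived from these non-standard estimates, integrating in $(x,t)$ first. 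This is the main technical content of the paper, and your plan does not supply it.

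Your passage from $L$ to $\calA$ is a valid alternative to the paper's, after two small repairs. The paper never cuts off a stationary solution. It takes $u_R=\Gamma*(f\chi_R)$, to which the $L$-estimate applies by construction, and averages $U_R=\frac{1}{2R}\int_{-R}^{R}u_R\dd{t}$. It then bounds derivatives of $U_R$ by Minkowski and H\"older in $t$, and shows $U_R\to u_\infty$ in $L^\infty$. In your resolvent version the repairs are these.

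\textbf{(i)} Since $\calA u_\lambda=f+\lambda u_\lambda$, $Lw$ contains the extra term $-\lambda\chi(t/T)u_\lambda$. It is harmless because $e^{t\calA}$ is a contraction on $L^q_yL^p_x$. For fixed $y,y'$ its kernel is an $x$-translate of a nonnegative function whose $x$-integral is the heat kernel in $y-y'$, so Minkowski in $x$ and Young in $y$ apply. This gives $\lambda\norm{u_\lambda}_{L^q_yL^p_x}\le\norm{f}_{L^q_yL^p_x}$, and at fixed $\lambda$ you can simply let $T\to\infty$.

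\textbf{(ii)} What you prove for $L$ is a bound for $g\mapsto\Gamma*g$, not an a priori inequality. So you need $w=\Gamma*(Lw)$, which follows from Duhamel's formula since $w$ is bounded and vanishes for $t\ll0$.

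Then uniform convergence $u_\lambda\to u$ (from $\int_0^\infty\norm{e^{t\calA}f}_{\infty}\dd{t}<\infty$) and weak compactness in the reflexive space $L^q_yL^p_x$ finish the argument. The paper's construction gets (i) and (ii) for free, at the price of the $L^\infty$ comparison of $U_R$ with $u_\infty$.
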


	The key step of the proof of Theorem \ref{thm:main result A} is
	the following maximal regularity estimates for the non-stationary problem,
	which has its own interest.

	\begin{theorem}\label{thm:main result L}
		Let $p,q\in (1,\infty)$.
		Assume that
		$f\in C\sub{c}^{\infty}(\bbR^{2d+1})$.
		Then there exists a weak solution of $L{u}=f$
		satisfying
		\begin{align}\label{eq:mani result L}
			\MoveEqLeft
			\norm{\laplacian_y{u}}_{L_y^{q}L_{x,t}^{p}}
			+\norm{\abs{\nabla_x}^{2/3}u}_{L_y^{q}L_{x,t}^{p}}
			+\norm{\abs{\nabla_x}^{1/3}\nabla_yu}_{L_y^{q}L_{x,t}^{p}}
			+\norm{(\abs{\partial_t+y\cdot\nabla_x}^{6}+\abs{\nabla_x}^4)^{1/12}
			\nabla_yu}_{L_y^{q}L_{x,t}^{p}}\\
			&\le C\norm{f}_{L_y^{q}L_{x,t}^{p}}
		\end{align}
		for some constant $C=C(p,q,d)>0$.
		Moreover, the corresponding weak (1,1) estimate also holds:
		\begin{align}\MoveEqLeft
			\norm{\laplacian_y{u}}_{L^{1,\infty}}
			+\norm{\abs{\nabla_x}^{2/3}u}_{L^{1,\infty}}
			+\norm{\abs{\nabla_x}^{1/3}\nabla_yu}_{L^{1,\infty}}
			+\norm{(\abs{\partial_t+y\cdot\nabla_x}^{6}+\abs{\nabla_x}^4)^{1/12}\nabla_yu}_{L^{1,\infty}}\\
			&\le C'\norm{f}_{L^1}.
		\end{align}
		Here, $C'$ is a positive constant depending only on the dimension.
	\end{theorem}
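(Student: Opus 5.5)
The solution is given by the explicit fundamental solution of $L=\partial_t-\laplacian_y+y\cdot\nabla_x$ (Kolmogorov's kernel). Writing $z=(x,y,t)$, set $u=\Gamma*_{\mathrm{K}}f$, where
\begin{equation}
	\Gamma(x,y,t)=\frac{c_d}{t^{2d}}\exp\Bigl(-\frac{\abs{y}^2}{4t}-\frac{3\abs{x-ty/2}^2}{t^3}\Bigr)\mathbf{1}_{t>0}
\end{equation}
(up to the sign convention for the drift), and $*_{\mathrm{K}}$ is convolution with respect to the non-commutative group law $(x,y,t)\circ(x',y',t')=(x+x'+t'y,\,y+y',\,t+t')$. $L$ is left-invariant under this law and homogeneous of degree $2$ under the dilations $\delta_\lambda(x,y,t)=(\lambda^3x,\lambda y,\lambda^2t)$. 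For $f\in C\sub{c}^\infty$ one checks directly that $u$ is continuous in $t$ with values in $L^1\sub{loc}$ and satisfies the weak formulation. The various operators in the theorem are then represented as $T_jf=K_j*_{\mathrm{K}}f$, with kernels
\begin{equation}
	K_j=\laplacian_y\Gamma,\quad \abs{\nabla_x}^{2/3}\Gamma,\quad \abs{\nabla_x}^{1/3}\nabla_y\Gamma,\quad (\abs{\partial_t+y\cdot\nabla_x}^6+\abs{\nabla_x}^4)^{1/12}\nabla_y\Gamma.
\end{equation}
Each $K_j$ is homogeneous of degree $-Q$ with $Q=4d+2$, so each $T_j$ is a singular integral on a homogeneous group.

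\textbf{Step 1: $L^2$ boundedness.} Take the Fourier transform in $(x,t)$, which is legitimate because the coefficients depend only on $y$. Alternatively, apply the Fourier transform in all variables along the characteristics. Either way the multipliers become explicit: $\hat\Gamma$ equals $1/(\iu\tau+\iu y\cdot\xi+\dots)$ expressed along the flow. The plan is to get the $L^2$ bounds from energy identities:
\begin{itemize}
	\item Testing $Lu=f$ against $u$ gives $\norm{\nabla_yu}$.
	\item Testing against $-\laplacian_y u$ gives $\norm{\laplacian_y u}_{L^2}\le C\norm{f}_{L^2}$.
	\item The commutator $[\nabla_y,y\cdot\nabla_x]=\nabla_x$ yields, by a Hörmander-type interpolation in the spirit of Bouchut, the fractional gains $\abs{\nabla_x}^{2/3}u$ and $\abs{\nabla_x}^{1/3}\nabla_yu$.
	\item For the transport term, note that $\partial_t+y\cdot\nabla_x$ applied to $u$ equals $f+\laplacian_y u$. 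This controls $\abs{\partial_t+y\cdot\nabla_x}^{1/2}\nabla_y u$ by interpolating between $\nabla_y^2u$ and $(\partial_t+y\cdot\nabla_x)u$, uniformly in $y$.
\end{itemize}

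\textbf{Step 2: Hörmander condition and Calderón–Zygmund theory.} Show that $\abs{K_j(z)}\le C\rho(z)^{-Q}$ and $\abs{\nabla K_j}$ decays one homogeneous order faster, where $\rho$ is the homogeneous quasi-norm. These bounds should come from the pointwise estimates of $\Gamma$ and its derivatives via scaling. For the nonlocal operators $\abs{\nabla_x}^{s}$, I would write the fractional power through its singular-integral formula in the Definition, or through the subordination $\abs{\nabla_x}^{s}=c\int_0^\infty \lambda^{-1-s/2}(I-\ce^{\lambda\laplacian_x})\dd\lambda$, and then estimate the resulting integrals of $\Gamma$. These estimates give the Hörmander integral condition
\begin{equation}
	\int_{\rho(z)>C\rho(w)}\abs{K(w\inv\circ z)-K(z)}\dd z\le C,
\end{equation}
with respect to the quasi-metric on the space of homogeneous type. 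Calderón–Zygmund theory on spaces of homogeneous type (Coifman–Weiss) then yields the weak $(1,1)$ estimate and, by interpolation and duality, the $L^p$ bounds for $p=q$.

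\textbf{Step 3: mixed norms $L_y^qL_{x,t}^p$ with $p\ne q$.} Treat $T_j$ as an operator in $y$ with values in operators on $L^p_{x,t}$. For fixed $y,y'$, the kernel acts in $(x,t)$ as convolution, with an affine shear. So I would apply vector-valued (Banach-valued) Calderón–Zygmund theory in the $y$ variable on $L^q(\bbR^d_y;L^p_{x,t})$, using Benedek–Calderón–Panzone. The input is the operator-norm Hörmander condition on the kernel $y\mapsto\mathcal K(y,y')\in\bddlin(L^p_{x,t})$. By Young's inequality, the $\bddlin(L^p_{x,t})$ norm of $\mathcal K(y,y')$ is bounded by its $L^1_{x,t}$ norm, and this is what the pointwise bounds from Step 2 should control. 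The starting point, boundedness on $L^p_yL^p_{x,t}$, comes from Step 2.

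\textbf{Main obstacle.} I expect two difficulties. The first is the $L^1_{x,t}$ integrals in Step 3: integrating the bounds out in $(x,t)$ may leave singularities near $y=y'$, so the regularity must be quantified in $y$ alone. The second is the pointwise bounds for the fractional transport kernel, whose multiplier is not smooth. For that kernel I would first try changing variables $x\mapsto x-ty$, which conjugates $\partial_t+y\cdot\nabla_x$ to $\partial_t$. This reduces the symbol to $(\abs{\tau}^6+\abs{\xi}^4)^{1/12}$, which is smooth away from the origin and homogeneous, so its kernel estimates follow by standard parabolic-type arguments.
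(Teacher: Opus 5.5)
Your Step 2 has a genuine gap, and it is the main difficulty of the theorem. You assert $\abs{K_j(z)}\lesssim\rho(z)^{-Q}$ with $\abs{\nabla K_j}$ one homogeneous order better, obtained ``via scaling''. Homogeneity of degree $-Q$ only reduces this to $K_j$ being bounded and smooth on the unit quasi-sphere, and for $j=2,3,4$ it is not. These operators are nonlocal in $x$ (resp.\ in $(x,t)$) but act pointwise in $y$. For example, $\int\gamma(x,y,t)\dd{x}=(4\pi t)^{-d/2}\ce^{-\abs{y}^2/4t}$ is the heat kernel in $y$. So for fixed $x_0\neq0$ and $t\to0^+$, the function $\gamma(\cdot,0,t)$ concentrates at $x=0$ with mass $\sim t^{-d/2}$. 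Hence $\abs{\gamma_2(x_0,0,t)}\approx c\,t^{-d/2}\abs{x_0}^{-d-2/3}\to\infty$, while $\norm{(x_0,0,t)}\to\abs{x_0}^{1/3}$. Thus $\gamma_2$ (and similarly $\gamma_3$) is unbounded near every point of $\{y=0,\ t=0\}$.

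Likewise, integrating first in $x$ and then in $t$, the $(x,t)$-mass of $\nabla_y\gamma(\cdot,y,\cdot)$ is $\int_0^\infty\nabla_y\bigl((4\pi t)^{-d/2}\ce^{-\abs{y}^2/4t}\bigr)\dd{t}$, which has size $\abs{y}^{1-d}$. So $\gamma_4$ is singular across the whole hyperplane $\{y=0\}$, as the factor $\abs{y}^{-(d-1+\nu)}$ in Proposition~\ref{prop:pointwise estimate gamma4} reflects.

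Your proposed shear $x\mapsto x-ty$ for $K_4$ does not cure this. It turns the multiplier $(\abs{\tau}^6+\abs{\xi}^4)^{1/12}$ into a nice kernel in $(x,t)$. But $K_4$ is that kernel convolved in $(x,t)$ only with $\nabla_y\gamma$, and this partial convolution is exactly what creates the singular set. Subordination or the singular-integral formula can yield correct bounds, but they are the mixed bounds of Propositions~\ref{prop:pointwise estimate gamma2,3} and~\ref{prop:pointwise estimate gamma4}, not standard Calder\'on--Zygmund bounds.

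With the mixed bounds, the mean-value derivation of the H\"ormander condition breaks down, because $\partial_y$ and $\partial_t$ land on the singular factor. The bounds $\abs{\nabla_y\gamma_3}\lesssim\norm{z}^{-(3d+1)}(\abs{y}+\abs{t}^{1/2})^{-(d+2)}$ and $\abs{\nabla_y\gamma_4}\lesssim\abs{y}^{-(d+\nu)}\norm{z}^{-(3d+3-\nu)}$ are not locally integrable near $\{y=0,\ t=0\}$, resp.\ $\{y=0\}$. What is missing is a direct verification of the integral condition adapted to this structure, as done in Section~\ref{sec:proof of Lp estimate}:
\begin{itemize}
\item Where the singular factor is comparable to $\norm{z}$, the mean-value argument still works.
\item Elsewhere, increments in the $x$-direction are still handled by the mean-value theorem. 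Increments in the directions that hit the singular factor need a further split, e.g.\ $\abs{y}\ge2\abs{y'}$ versus $\abs{y}<2\abs{y'}$.
\item On the first piece, use $\abs{K(a)-K(b)}\le(\abs{K(a)}+\abs{K(b)})^{1-\sigma}\abs{K(a)-K(b)}^{\sigma}$ with $\sigma<1$ to regain integrability.
\item On the thin second piece, bound the two terms separately and gain from its small measure, producing factors such as $\abs{y'}^{1-\nu}/\norm{z'}^{1-\nu}\lesssim1$.
\end{itemize}

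Your Steps 1 and 3 otherwise match the paper. Step 1 is Bouchut's $L^2$ theory; note that testing with $-\laplacian_y u$ alone leaves the cross term $\agb{\nabla_x u,\nabla_y u}$, so the commutator gain must come first. Step 3 is vector-valued Calder\'on--Zygmund theory in $y$ with $L^1_{x,t}$ bounds on kernel differences. That step does survive the mixed bounds, because the region $\abs{y-y'}\ge c\abs{y'-y''}$ stays away from the singular set $y=y'$.
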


	When $p=q=2$,
	the estimate \eqref{eq:mani result L} 
	for $\laplacian_y{u}$ and $\abs{\nabla_x}^{2/3}u$
	is proved by Bouchut~\cite{Bouchut2002}
	as the a priori estimates.
	The proof relies on H\"{o}rmander's commutator
	\begin{equation}\label{eq:commutator}
		\nabla_x
		=[\nabla_y,\partial_t+y\cdot\nabla_x]
		=\nabla_y(\partial_t+y\cdot\nabla_x)
		-(\partial_t+y\cdot\nabla_x)\nabla_y
	\end{equation}
	and the energy method based on the integration by parts.
	Moreover, its method is also valid for the solution of $\calA{u}=f$ because
	\begin{equation}
		\nabla_x
		=[\nabla_y,y\cdot\nabla_x]
		=\nabla_y(y\cdot\nabla_x)-(y\cdot\nabla_x)\nabla_y
	\end{equation}
	holds.

	The $L^p$-estimate of $\laplacian_y{u}$
	for general hypoelliptic degenerate Ornstein-Uhlenbeck operators
	is proved by Bramanti, Cupini, Lanconelli and Priola~\cite{Bramanti2009}.
	However, since they take the case $\tr{B}\ne 0$ into consideration,
	they are forced to take $L^p$-norms
	only on the strip $S=\bbR^N\times [-1,1]$,
	rather than on $\bbR^N\times\bbR$.
	We extend the result of \cite{Bramanti2009} to
	the time-global case in exchange for specialization.

	It is a classical result
	proved in \cite{Rothschild1976HypoellipticDO}
	that if $\calA{u}=f$ and $f$ belongs to
	the Sobolev spaces $W^{\alpha,p}$ with $p\in (1,\infty)$ and $\alpha \ge 0$,
	then $u$ is in $W\sub{loc}^{\alpha+2/3,p}$.
	This result is very general because it is independent of
	the domain of the functions.
	When $\laplacian_y$ in $L$ is replaced
	by the fractional Laplacian
	$\laplacian_y^{\alpha/2}=-(-\laplacian_y)^{\alpha/2}$
	with $\alpha\in(0,2)$,
	the maximal regularity on both $x$ and $y$
	is proved by Chen and Zhang~\cite{Chen2018} and
	Huang, Menozzi and Priola~\cite{Huang2019162}.
	In \cite{Chen2018} the proof is based on
	the Fefferman-Stein type estimate
	that leads to the $L^{\infty}$-$\mathrm{BMO}$ boundedness.
	This approach is used also in \cite{Chen2019}
	to prove the maximal regularity for
	the Kolmogorov-type hypoelliptic operator
	with time-dependent coefficients.
	The proof in \cite{Huang2019162}
	is based on the H\"{o}rmander condition to derive the weak (1,1) estimate
	and similar to ours.
	However, our approach is based on the pointwise estimates for the 
	fundamental solution in a more explicit form than in \cite{Huang2019162}
	particularly in the estimate of $\abs{\nabla_x}^{2/3}\Gamma$;
	see \eqref{al:pointwise estimate} below.
	As for the optimal smoothing estimates in the H\"{o}lder spaces,
	the Shauder estimates are proved
	by Da Prato and Lunardi~\cite{DaPrato1995}
	in the non-degenerate case and
	by Lunardi~\cite{Lunardi1997}
	in the degenerate case.

	In any case, the previous studies introduced above
	\cite{Bramanti2009,Chen2019,Huang2019162,Rothschild1976HypoellipticDO}
	considered isotropic $L^p$-estimates.
	On the other hand,
	since the operator $\calA$ has an anisotropic structure
	with respect to $x$ and $y$,
	it is natural to study the anisotropic estimates for solutions
	to $\calA{u}=f$.
	For example, Dong and Yastrzhembskiy~\cite{Dong2022KFP}
	have considered anisotropic norms.
	We expect that the anisotropic $L^p$-estimates are useful 
	in the study of some non-linear problem 
	such as the triple deck equations arising from the boundary
	layer theory in the fluid mechanics, where the equations 
	contain $\calA$ as the principal linear term.
	
	We remark that 
	our result does not follow from \cite{Dong2022KFP} immediately
	because the estimates in \cite{Dong2022KFP} for the case we consider
	assumes $u$ and its derivative belong to $L^q_{y}L^p_{x,t}$,
	but the solution of $Lu=f, \ f\in L^q_{y}L^p_{x,t}$ 
	defined by the fundamental solution does not belong 
	to $L^q_{y}L^p_{x,t}$ in general.
	In addition,
	we show the estimate containing the fractional transport operator,
	i.e.,
	\begin{equation}
		\norm{\fractrans\nabla_yu}_{L^q_{y}L^p_{x,t}}
		\le C \norm{f}_{L^q_{y}L^p_{x,t}},
	\end{equation}
	which is not obtained in the literature even in the isotropic norms
	and is one of the novelty of this paper.
	We expect that this estimate is useful for the 
	study of the trace regularity of the solutions 
	and for the analysis of the Neumann boundary problem 
	of $Lu=f$ in the half space.

\subsection{Strategy of the proof}

	Our main goal is to establish the maximal regularity estimates
	in the anisotropic $L^p$-spaces for the stationary problem
	$\calA{u}=f$.
	However, it is not easy to treat the stationary problem directly
	except for the special case $L^2(\bbR^{2d})$,
	where the elementary energy method can be applied as shown
	in Bouchut~\cite{Bouchut2002}.
	We first consider the operator $L=\partial_t-\calA$.
	The fundamental solution $\Gamma$ of $L$ is
	constructed in \cite[p.\,148]{Hormander1967}.
	The key step is to derive the pointwise estimates 
	of $\Gamma$ and its derivatives as follows:
	if $l+m+n=0$ or $1$ and $\theta\in(0,1), \nu>0$, then
	\begin{align}\label{al:pointwise estimate}
		\begin{split}
		&\abs{\partial_{x_i}^l\partial_{y_j}^m \partial_t^n
			\laplacian_y\Gamma(x,y,t; x',y',t')}\\
		&\qquad  \le
		\frac{C}
		{(\abs{x-x'-(t-t')y'}^{1/3}+\abs{y-y'}+\abs{t-t'}^{1/2})^{4d+2+3l+m+2n}},\\
		&\abs{\partial_{x_i}^l\partial_{y_j}^m \partial_t^n
			\abs{\nabla_x}^{\theta}\Gamma(x,y,t; x',y',t')}\\
		&\qquad  \le
		\frac{C}{(\abs{x-x'-(t-t')y'}^{1/3}+\abs{y-y'}+\abs{t-t'}^{1/2})
		^{3d+3\theta+3l}
			(\abs{y-y'}+\abs{t-t'}^{1/2})^{d+m+2n}
		},\\
		&\abs{\partial_{x_i}^l\partial_{y_j}^m \partial_t^n
			\fractrans\Gamma(x,y,t; x',y',t')}\\
		&\qquad  \le
		\frac{C}{(\abs{x-x'-(t-t')y'}^{1/3}+\abs{y-y'}+\abs{t-t'}^{1/2})
		^{3d+3+3l+2n-\nu}\abs{y-y'}^{d-1+m-\nu}
		},
		\end{split}
	\end{align}
	which give us the H\"{o}rmander condition for the singular integral kernel
	defined by the fundamental solution, and thus,
	the theory of generalized Calder\'{o}n-Zygmund operator can be applied
	to obtain the estimate
	in the isotropic space $L^p(\bbR^{2d+1}), 1<p<\infty$;
	See Proposition \ref{prop:pointwise estimte gamma}, 
	\ref{prop:pointwise estimate gamma2,3} and 
	\ref{prop:pointwise estimate gamma4}
	in Section \ref{sec:Pointwise estimates for the fundamental solution}
	for details.
	The pointwise estimates \eqref{al:pointwise estimate} have
	its own interest. In particular, the estimate of
	$\abs{\nabla_x}^{2/3}\Gamma$ has a different form from the
	one of $\laplacian_y\Gamma$ due to the non-locality and
	the singularity in the Fourier side coming from the fractional operator
	$\abs{\nabla_x}^{2/3}$.
	This situation also differs from $\nabla_x\Gamma$, 
	because it has the estimate 
	\begin{align}\MoveEqLeft
		\abs{\partial_{x_i}^l\partial_{y_j}^m \partial_t^n
			\nabla_x\Gamma(x,y,t; x',y',t')}\\
		&\qquad  \le
		\frac{C}
		{(\abs{x-x'-(t-t')y'}^{1/3}+\abs{y-y'}+\abs{t-t'}^{1/2})^{4d+3+3l+m+2n}}
	\end{align}
	where $l+m+n\le 1$, 
	which can be shown in the same way as in the proof of
	Proposition \ref{prop:pointwise estimte gamma}.
	Clarifying this point as in
	\eqref{al:pointwise estimate} is one of the novelty of this paper,
	compared with the known works cited above, and 
	the explicit bounds \eqref{al:pointwise estimate} are
	useful in the study of the equations $Lu=f$ and $\calA{u}=f$.
	For example, in order to obtain the estimates in the anisotropic spaces,
	we need to apply the theory 
	of the vector-valued singular integral operators.
	The pointwise estimates \eqref{al:pointwise estimate} play
	a crucial role in this paper to derive the estimates 
	for the vector-valued kernel.

	We return to estimation for the solution of
	the stationary problem $\calA{u}=f$.
	If we assumed that the solution $u$ has compact support in advance,
	the simple argument in \cite{Bramanti2009} would be available.
	Indeed, let $\chi\colon\bbR\to\bbR$ be a smooth cut-off function
	with compact support,
	and define $\chi_R$ as $\chi_R(t)=\chi(t/R)$ for $R>0$.
	Let $u$ be the solution of $\calA{u}=f$ with compact support.
	Applying the estimate for the solution of $Lu=f$ to $u\chi_R$, we have
	\begin{equation}
		\norm{\laplacian_y{u}\chi_R}_{L^p}
		\le C\norm{L(u\chi_R)}_{L^p}
		\le C\prn{\norm{f\chi_R}_{L^p}+\norm{u\chi_R'}_{L^p}}.
	\end{equation}
	Dividing the both side by $R^{1/p}$ and taking $R\to\infty$
	yield the desired result.
	However,
	the above argument works only when $u$ is assumed to be
	compactly supported.
	Therefore, we need to apply another argument to show
	Theorem \ref{thm:main result A},
	where $u$ is not necessary compactly supported.
	Our approach is based on time-averaging.
	We construct a solution of $Lu_R=f\chi_R$ for $R>0$
	by the fundamental solution
	and then take the time-average of $u_R$ in the time-interval $[-R,R]$,
	written $U_R$.
	This $U_R$
	has anisotropic uniform estimates and
	converges the solution of $-\calA{u}=f$.

	The rest of the paper is organized as follows.
	Section \ref{sec:preliminaries} is devoted to preliminaries.
	We recall the Lie group and norm associated to $L$.
	We devote Section 
	\ref{sec:Pointwise estimates for the fundamental solution} to estimation 
	of the integral kernels.
	In Section \ref{sec:proof of Lp estimate},
	we prove Theorem \ref{thm:main result L} for the case of $p=q$
	by studying properties of the fundamental solution.
	Theorem \ref{thm:main result L} of the general case is
	proved in Section \ref{sec:proof of LyLxy estimate}.
	Finally, we show
	Theorem \ref{thm:main result A}
	in Section \ref{sec:proof for stationary solutions}.

	We adopt the usual convention ``$\lesssim$''.
	Namely, we write $A\lesssim B$ if there exists a positive constant $C$
	such that $A\le CB$
	and write $A\sim B$ if both $A\lesssim B$ and $B\lesssim A$ hold.

\section{Some known and preliminary results}
	\label{sec:preliminaries}
\subsection{Group and norm}

	Let us consider the operator $L$.
	It is left-invariant with respect to the associated translations:
	\begin{equation}
		(x',y',t')\circ (x,y,t)
		=(x+x'+ty',y+y',t+t').
	\end{equation}
	The operation is the group 
	introduced in \cite{LanconelliPolidoro1994}
	and
	\begin{equation}
		(x',y',t')\inv\circ(x,y,t)=(x-x'-(t-t')y',y-y',t-t').
	\end{equation}

	Furthermore, set $\delta(\lambda)z=(\lambda^3x,\lambda y,\lambda^2 t)$
	for $\lambda>0$.
	Then $L(u\circ\delta(\lambda))=\lambda^2(Lu)\circ \delta(\lambda)$.
	According to the scaling,
	we define the ``norm''
	\begin{equation}
		\norm{z}=\abs{x}^{1/3}+\abs{y}+\abs{t}^{1/2}
	\end{equation}
	for $z=(x,y,t)\in \bbR^{2d+1}$.
	By Young's inequality\footnote{
		In this paper, 
		we use the variables $x,y,t$ for the components of
		$z \in\bbR^d\times\bbR^d\times\bbR$ without any mention
		and use $z'=(x',y',t'), \zeta=(\xi,\eta,\tau)$ etc.
		in the similar way.},
	\begin{align}
		\norm{(z')\inv}
		& =\norm{(-x'+t'y',-y',-t')}
		 \le \abs{x'}^{1/3}+\frac{2}{3}\abs{t'}^{1/2}+\frac{1}{3}\abs{y'}
		+\abs{y'}+\abs{t'}^{1/2}\\
		& \le \frac{5}{3}\norm{z'}.
	\end{align}
	Similarly,
	\begin{equation}
		\norm{z'\circ z}
		=\norm{(x+x'+ty',y+y',t+t')}
		\le \frac{5}{3}\prn{\norm{z}+\norm{z'}}.
	\end{equation}
	We denote the constant $5/3$ by $c_0$.

	Define a quasi-symmetric quasi-distance $d$ to be
	\begin{equation}
		d(z,z')
		 =\norm{(z')\inv\circ z}
		 =\abs{x-x'-(t-t')y'}^{1/3}+\abs{y-y'}+\abs{t-t'}^{1/2}.
	\end{equation}
	The quasi-symmetric quasi-distance $d$ enjoys the properties
	\begin{gather}
		d(z',z)\le \frac{5}{3}d(z,z') ,\qquad
		d(z,z')\le \frac{5}{3}(d(z,z'')+d(z'',z')).
	\end{gather}

	We denote by $B(z',\delta)$ the ``ball'' induced by $d$
	with center $z'$ and radius $\delta$,
	that is $B(z',\delta)=\setdef{z\in \bbR^d}{d(z,z')<\delta}$.
	\begin{proposition}
		Each ball $B(z,\delta)$ is open to the Euclidean topology and
		the topology induced by $d$ coincides the Euclidean topology.
	\end{proposition}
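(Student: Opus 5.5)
The plan is to prove two things. First, that the map $z\mapsto d(z,z')$ is continuous in the Euclidean topology, which makes each ball open. Second, that every Euclidean ball around a point contains a $d$-ball around it; together with the first fact this shows the two topologies coincide.

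\emph{Openness.} Fix $z'$. The map
\[
z\mapsto d(z,z')=\abs{x-x'-(t-t')y'}^{1/3}+\abs{y-y'}+\abs{t-t'}^{1/2}
\]
is a sum of continuous functions of $z$, namely compositions of affine maps with $\abs{\cdot}^{1/3}$, $\abs{\cdot}$ and $\abs{\cdot}^{1/2}$. Hence $B(z',\delta)$ is the preimage of the open set $(-\infty,\delta)$ under a continuous map, and so it is Euclidean open.

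\emph{Comparing the topologies.} The $d$-topology is the one whose open sets $U$ satisfy: for every $z'\in U$ there is $\delta>0$ with $B(z',\delta)\subset U$. Since each $d$-ball is Euclidean open, every $d$-open set is a union of Euclidean open sets, hence Euclidean open.

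For the converse, take a Euclidean open set $U$ and $z'\in U$, and choose $\varepsilon\in(0,1)$ with $\{\abs{z-z'}<\varepsilon\}\subset U$. Let $z\in B(z',\delta)$. From the definition of $d$ we get
\[
\abs{y-y'}<\delta,\qquad \abs{t-t'}<\delta^2,\qquad \abs{x-x'-(t-t')y'}<\delta^3 .
\]
Therefore
\[
\abs{x-x'}\le \delta^3+\delta^2\abs{y'}.
\]
Choosing $\delta$ small in terms of $\varepsilon$ and $\abs{y'}$ makes $\abs{z-z'}<\varepsilon$, so $B(z',\delta)\subset U$. Hence $U$ is $d$-open.

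\emph{Remarks.} The quasi-symmetry $d(z',z)\le c_0\,d(z,z')$ is not needed here, since the balls are defined with the second slot as the center. The only mild point is that $\delta$ depends on $\abs{y'}$, which is harmless for a local statement.
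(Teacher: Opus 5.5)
Your proof is correct and follows the same route as the paper. Openness of $B(z',\delta)$ comes from continuity of $z\mapsto d(z,z')$, and the reverse inclusion comes from controlling $\abs{x-x'}$ by $\abs{x-x'-(t-t')y'}$ plus $\abs{t-t'}\abs{y'}$, with $\delta$ allowed to depend on $\abs{y'}$. Your coordinatewise bound $\abs{x-x'}\le\delta^3+\delta^2\abs{y'}$ is slightly cleaner than the paper's version, which passes through $\norm{z-z'}\ge\abs{z-z'}$ for $\norm{z-z'}<1$.
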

	\begin{proof}
		It is clear that the ball $B(z',\delta)$ is open to the Euclidean
		topology and therefore $B(z',\delta)$ contains
		a Euclidean ball with center $z'$
		because the mapping $z\mapsto d(z,z')$ is continuous.

		Take $\epsilon>0$ and $z'\in \bbR^{2d+1}$.
		There exists $\delta>0$ such that 
		$\norm{(z')\inv\circ z}<\delta$ implies 
		\begin{equation}
			\norm{(z')\inv\circ z}
			+\norm{(z')\inv\circ z}^{2/3}\abs{y'}
			<\min\crb{\epsilon, 1}.
		\end{equation}
		Then $\abs{z-z'}<\epsilon$ holds for $z\in B(z',\delta)$.
		Indeed, if $z\in B(z',\delta)$, then
		\begin{equation}
			\min\crb{\epsilon, 1}
			> \norm{(z')\inv\circ z}
				+\norm{(z')\inv\circ z}^{2/3}\abs{y'}
			\ge \norm{z-z'}\ge \abs{z-z'}
		\end{equation}
		because 
		\begin{equation}
			\abs{x-x'}^{1/3}\le \abs{x-x'-(t-t')y'}^{1/3}+\abs{(t-t')y'}^{1/3}.
		\end{equation}
	\end{proof}

	\begin{proposition}\label{thm:triangle ineq below}
		If $0<M<c_0^{-2}$, then
		$\norm{z'}\le M\norm{z}$ implies
		\begin{equation}
			\frac{1-c_0^2M}{c_0}\norm{z}
			\le \norm{z\circ z'}.
		\end{equation}
	\end{proposition}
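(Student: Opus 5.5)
We use only the two quasi-norm inequalities established just above,
\begin{equation}
	\norm{(w)\inv}\le c_0\norm{w},\qquad
	\norm{w'\circ w}\le c_0\prn{\norm{w}+\norm{w'}},
\end{equation}
valid for all $w,w'\in\bbR^{2d+1}$ with $c_0=5/3$. The idea is to recover $z$ from $z\circ z'$ by right-multiplying with $(z')\inv$, and then to absorb the small term $\norm{z'}$.

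First we check the group identity $z=(z\circ z')\circ (z')\inv$. This follows from associativity of the Lanconelli--Polidoro group law. It can also be verified directly: with $z'=(x',y',t')$ we have $(z')\inv=(-x'+t'y',-y',-t')$. Then
\begin{equation}
	z\circ z'=(x'+x+t'y,\,y'+y,\,t'+t),
\end{equation}
and composing this on the right with $(z')\inv$ gives
\begin{equation}
	\prn{-x'+t'y'+x'+x+t'y-t'(y'+y),\,y,\,t}=(x,y,t).
\end{equation}

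Next we apply the quasi-triangle inequality with $w=(z')\inv$ and $w'=z\circ z'$, followed by the inverse bound:
\begin{equation}
	\norm{z}\le c_0\prn{\norm{z\circ z'}+\norm{(z')\inv}}
	\le c_0\norm{z\circ z'}+c_0^2\norm{z'}.
\end{equation}

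Finally we use the hypothesis $\norm{z'}\le M\norm{z}$. This gives $\norm{z}\le c_0\norm{z\circ z'}+c_0^2M\norm{z}$. Since $c_0^2M<1$, the last term can be moved to the left, yielding $(1-c_0^2M)\norm{z}\le c_0\norm{z\circ z'}$. Dividing by $c_0$ gives exactly the claimed inequality.

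There is no real obstacle in this argument. The only point needing care is the order of the factors in the group product, since the law is noncommutative. One must cancel $z'$ on the right, and the quasi-triangle inequality for the product $w'\circ w$ holds for either order, so this causes no difficulty.
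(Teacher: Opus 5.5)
Your proposal is correct and follows the paper's own proof: write $z=(z\circ z')\circ(z')\inv$, apply the quasi-triangle inequality and the inverse bound $\norm{(z')\inv}\le c_0\norm{z'}$, then absorb $c_0^2M\norm{z}$ into the left-hand side using $c_0^2M<1$. Your direct check of the group identity and of the order of the factors is accurate. It simply spells out what the paper states in one line.
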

	\begin{proof}
		Use the triangle inequality and assumption to
		$z=(z\circ z')\circ (z')\inv$.
	\end{proof}
	\begin{corollary}\label{cor:triangle ineq for qd}
		There exist two constants $c_1,c_2>0$ such that
		if $c_1\min\crb{\norm{z'},\norm{{z'}\inv}}\le \norm{z}$,
		then $c_2\norm{z}\le \min\crb{\norm{z\circ z'},\norm{z'\circ z}}$.
	\end{corollary}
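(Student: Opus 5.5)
The plan is to reduce the corollary to Proposition~\ref{thm:triangle ineq below}, applied twice: once with $z'$ for the product $z\circ z'$, and once with a mirrored version for $z'\circ z$. Since $\norm{(z')\inv}\le c_0\norm{z'}$ and symmetrically $\norm{z'}=\norm{((z')\inv)\inv}\le c_0\norm{(z')\inv}$, we have
\begin{equation}
	\norm{z'}\le c_0\min\crb{\norm{z'},\norm{(z')\inv}}.
\end{equation}
Hence, if we choose $c_1$ large, the hypothesis $c_1\min\crb{\norm{z'},\norm{(z')\inv}}\le\norm{z}$ gives $\norm{z'}\le (c_0/c_1)\norm{z}$.

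First, fix $M=1/(2c_0^2)$ and take $c_1=c_0/M=2c_0^3$. Then $\norm{z'}\le M\norm{z}$, and Proposition~\ref{thm:triangle ineq below} yields
\begin{equation}
	\norm{z\circ z'}\ge \frac{1-c_0^2M}{c_0}\norm{z}=\frac{1}{2c_0}\norm{z}.
\end{equation}

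Second, for $z'\circ z$ I would run the same argument as in the proof of Proposition~\ref{thm:triangle ineq below}, but writing $z=(z')\inv\circ(z'\circ z)$. The quasi-triangle inequality $\norm{a\circ b}\le c_0(\norm{a}+\norm{b})$ gives
\begin{equation}
	\norm{z}\le c_0\prn{\norm{(z')\inv}+\norm{z'\circ z}}\le c_0\prn{c_0\norm{z'}+\norm{z'\circ z}}\le c_0^2M\norm{z}+c_0\norm{z'\circ z}.
\end{equation}
Rearranging, $\norm{z'\circ z}\ge \frac{1-c_0^2M}{c_0}\norm{z}=\frac{1}{2c_0}\norm{z}$, with the same $M$.

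Taking $c_2=1/(2c_0)$ then gives the claim. The only delicate point is the direction of the group product: the proposition handles $z\circ z'$ directly, while $z'\circ z$ needs the mirrored decomposition together with the inverse bound $\norm{(z')\inv}\le c_0\norm{z'}$. This costs one extra factor of $c_0$, which the choice of $M$ absorbs. Otherwise the argument is routine bookkeeping of constants.
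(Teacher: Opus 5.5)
Your proof is correct. It is essentially the derivation the paper intends: the corollary is stated there without proof, as a consequence of Proposition~\ref{thm:triangle ineq below}. The bound $\norm{z'}\le c_0\min\crb{\norm{z'},\norm{(z')\inv}}$ reduces the hypothesis to $\norm{z'}\le M\norm{z}$, and the decomposition $z=(z')\inv\circ(z'\circ z)$ handles the second product. One small correction to your closing remark: the mirrored estimate costs no extra factor of $c_0$. The proposition's own proof, via $z=(z\circ z')\circ(z')\inv$, also uses $\norm{(z')\inv}\le c_0\norm{z'}$, which is why both products give the identical constant $(1-c_0^2M)/c_0$.
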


	\begin{proposition}
		For some constant $c>0$ we have
		$\abs{B(z,\delta)}=c\delta^{4d+2}$,
		which gives the doubling condition.
		Here, $\abs{B(z,\delta)}$ is the Lebesgue measure of 
		the ball $B(z,\delta)$.
	\end{proposition}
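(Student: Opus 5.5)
The proof rests on two facts: the ball $B(z,\delta)$ is a left translate of a dilate of the unit ball, and both operations have explicit Jacobians. First, writing $z=(x,y,t)$, the definition of $d$ gives
\begin{equation}
	B(z,\delta)=\setdef{w}{\norm{z\inv\circ w}<\delta}
	=z\circ B(0,\delta),
\end{equation}
where $z\circ E=\setdef{z\circ w}{w\in E}$. Here I use $z\circ(z\inv\circ w)=w$, which holds because $\circ$ is a group law with $(x',y',t')\inv=(-x'+t'y',-y',-t')$.

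Next I show that left translation $w\mapsto z\circ w$ preserves Lebesgue measure. For fixed $z=(x,y,t)$ the map is
\begin{equation}
	(x',y',t')\mapsto (x'+x+t'y,\;y'+y,\;t'+t).
\end{equation}
It is affine in $(x',y',t')$. Its linear part is block upper triangular with identity blocks on the diagonal, the only off-diagonal block being the column $y$ in the $x'$-rows coming from $t'$. Hence its Jacobian determinant equals $1$. It is also a bijection, with inverse given by left translation by $z\inv$. So $\abs{z\circ E}=\abs{E}$ for every measurable $E$, and $\abs{B(z,\delta)}=\abs{B(0,\delta)}$.

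For the dilation, I use that $\norm{\delta(\lambda)w}=\lambda\norm{w}$, since
\begin{equation}
	\abs{\lambda^3x}^{1/3}+\abs{\lambda y}+\abs{\lambda^2t}^{1/2}
	=\lambda\bigl(\abs{x}^{1/3}+\abs{y}+\abs{t}^{1/2}\bigr).
\end{equation}
It follows that $B(0,\delta)=\delta(\delta)B(0,1)$. The linear map $\delta(\delta)$ is diagonal, with entries $\delta^3$ repeated $d$ times, $\delta$ repeated $d$ times, and $\delta^2$ once, so its determinant is $\delta^{3d+d+2}=\delta^{4d+2}$. Therefore
\begin{equation}
	\abs{B(z,\delta)}=\delta^{4d+2}\abs{B(0,1)}.
\end{equation}

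It remains to check that $c=\abs{B(0,1)}$ lies in $(0,\infty)$. The ball $B(0,1)$ contains a Euclidean neighbourhood of $0$, either by the previous proposition or directly from continuity of $\norm{\cdot}$, so $c>0$. It is also bounded, since it lies in $\setdef{w}{\abs{x}<1,\ \abs{y}<1,\ \abs{t}<1}$, so $c<\infty$. The doubling condition follows at once:
\begin{equation}
	\abs{B(z,2\delta)}=2^{4d+2}\abs{B(z,\delta)}.
\end{equation}

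The only step needing any care is the unit Jacobian of the left translation. That is where the skew term $t'y$ could in principle matter, but its block is strictly off-diagonal, so it does not affect the determinant.
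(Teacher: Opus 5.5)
Your proof is correct and is exactly the ``straightforward computation'' the paper leaves unwritten. The left translation $w\mapsto z\circ w$ is affine with unit Jacobian, and the dilation $\delta(\lambda)$ has Jacobian $\lambda^{4d+2}$, which gives $|B(z,\delta)|=\delta^{4d+2}|B(0,1)|$ with $0<|B(0,1)|<\infty$.
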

	\begin{proof}
		It is proved by straightforward computation.
	\end{proof}

	Note that the Lebesgue measure is invariant under inversion;
	namely, since $z\inv=(-x+ty,-y,-t)$,
	we have 
	\begin{equation}
		\int_{\bbR^{2d+1}}f(z\inv)\dd{z}=
		\int_{\bbR^{2d+1}}f(z)\dd{z}.
	\end{equation}

\subsection{Fundamental solution}

	The solution of $\partial_t{u}-\calA{u}=f$ is 
	formally given by
	\begin{equation}\label{eq:solution of L defined by fundamental solution}
		u(z)
		= (Tf)(z)
		\coloneqq \int_{-\infty}^t \int_{\bbR^{2d}}
		f(z')\Gamma(z,z')\dd{z'},
	\end{equation}
	where the fundamental solution $\Gamma$ is defined as
	\begin{align}
		\gamma(x,y,t) & =
		\begin{dcases}
			\prn*{\frac{3}{4\pi^2t^4}}^{d/2}
			\exp\prn*{-\frac{1}{t^3}\prn{3\abs{x}^2-3x\cdot ty+\abs{ty}^2}}
			& (t>0)\\
			\quad \ \,  0 & (t\le 0)
		\end{dcases}
		,\\
		\Gamma(z,z')& =\gamma((z')\inv\circ z).
	\end{align}
	The function $\gamma$ is represented by the Fourier transform. Indeed,
	\begin{equation}
		\gamma(x,y,t)=\fourier\inv\prn{\ce^{-F(\xi,\eta,t)} }
		=\frac{1}{(2\pi)^{2d}}
		\int_{\bbR^d\times\bbR^d} \ce^{-F(\xi,\eta,t)}
		\ce^{\iu(\xi\cdot x+\eta\cdot y)}\dd{\xi}\dd{\eta},
	\end{equation}
	where
	\begin{align}
		F(\xi,\eta,t)
		=\frac{1}{3}
		\prn[\big]{t^3\abs{\xi}^2+3t^2(\xi\cdot\eta)+3t\abs{\eta}^2}.
	\end{align}
	Observe that
	\begin{equation}
		F(\xi,\eta,t)\gtrsim t^3\abs{\xi}^2+t\abs{\eta}^2
	\end{equation}
	holds, which will be frequently used in this paper.

	To prove Theorem \ref{thm:main result L} let us study
	the operators
	\begin{align}
		T_1f&=\laplacian_y Tf,\\
		T_2f&=\abs{\nabla_x}^{2/3} Tf,\\
		T_3f&=\abs{\nabla_x}^{1/3}\nabla_yTf,\\
		T_4f
		&=(\abs{\partial_t+y\cdot\nabla_x}^{6}+\abs{\nabla_x}^4)^{1/12}
		\nabla_yTf.
	\end{align}
	Set
	\begin{gather}
		\gamma_1(z)=\laplacian_y\gamma(z), \qquad
		\gamma_2(z)=\abs{\nabla_x}^{2/3}\gamma(z),\qquad
		\gamma_3(z)=\abs{\nabla_x}^{1/3}\nabla_y\gamma(z),\\
		\gamma_4(z)
		=(\abs{\partial_t+y\cdot\nabla_x}^{6}+\abs{\nabla_x}^4)^{1/12}
		\nabla_y\gamma(z),\\
		\Gamma_i(z,z')=\gamma_i((z')\inv \circ z).
	\end{gather}

\section{Pointwise estimates for the fundamental solution}
	\label{sec:Pointwise estimates for the fundamental solution}
	We can easily check pointwise estimates for 
	$\gamma$ and $\gamma_1$ because
	we know their exact expressions.
	\begin{proposition}\label{prop:pointwise estimte gamma}
		The following estimates hold:
		\begin{equation}\label{eq:poitwise estimate gamma}
			\abs{\partial_{x_i}^l\partial_{y_j}^m \partial_t^n\gamma(z)}
			\lesssim \frac{1}{\norm{z}^{4d+3l+m+2n}}.
		\end{equation}
		Here, $l,m,n$ are non-negative integers.
		In particular,
		\begin{equation}
			\abs{\gamma(z)}\lesssim \frac{1}{\norm{z}^{4d}},\qquad
			\qquad \abs{\nabla_y\gamma(z)}\lesssim \frac{1}{\norm{z}^{4d+1}}.
		\end{equation}
	\end{proposition}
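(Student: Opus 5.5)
The natural route is the homogeneity of $\gamma$ under the dilations $\delta(\lambda)$, combined with a bound on the unit ``sphere'' $\norm{z}=1$.

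\emph{Step 1: scaling.} From the explicit formula, $\gamma(\delta(\lambda)z)=\lambda^{-4d}\gamma(z)$ for $\lambda>0$. Indeed, the exponent $t^{-3}(3\abs{x}^2-3x\cdot ty+\abs{ty}^2)$ is invariant under $(x,y,t)\mapsto(\lambda^3x,\lambda y,\lambda^2t)$, and the prefactor $t^{-2d}$ scales as $\lambda^{-4d}$. Differentiating this identity gives
\begin{equation}
	(\partial_{x_i}^l\partial_{y_j}^m\partial_t^n\gamma)(\delta(\lambda)z)
	=\lambda^{-4d-3l-m-2n}\,(\partial_{x_i}^l\partial_{y_j}^m\partial_t^n\gamma)(z).
\end{equation}
Given $z\ne0$, set $\lambda=\norm{z}$ and $w=\delta(\lambda)^{-1}z$, so that $\norm{w}=1$. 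The claim then reduces to showing that every derivative $\partial^\alpha\gamma$ is bounded on the compact set $S=\{\norm{w}=1\}$.

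\emph{Step 2: boundedness on $S$.} On $S\cap\{t\le 0\}$ nothing needs proving once we know $\gamma$ is smooth on $\bbR^{2d+1}\setminus\{0\}$ with all derivatives vanishing for $t\le0$. Away from $t=0$, smoothness is clear from the formula. The delicate part is $S\cap\{t=0\}$, where $\abs{x}^{1/3}+\abs{y}=1$, so $(x,y)\ne0$. Here I would use the quadratic form bound
\begin{equation}
	3\abs{x}^2-3x\cdot ty+\abs{ty}^2\ge c\,(\abs{x}^2+t^2\abs{y}^2).
\end{equation}
This holds because the matrix $\begin{pmatrix}3&-3/2\\-3/2&1\end{pmatrix}$ is positive definite.

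\emph{Step 3: the main obstacle.} Any derivative of $\gamma$ for $t>0$ has the form $P(x,y,t,t^{-1})e^{-Q/t^3}$, where $P$ is a polynomial and $Q=3\abs{x}^2-3x\cdot ty+\abs{ty}^2$. Near $t=0$ on $S$ there are two cases.
\begin{itemize}
	\item If $\abs{x}$ stays bounded below, then $Q/t^3\gtrsim t^{-3}$, so the exponential beats every negative power of $t$.
	\item If $x\to0$, then $\abs{y}\to1$ and $Q\gtrsim t^2\abs{y}^2$, so $Q/t^3\gtrsim t^{-1}$, which again dominates any power $t^{-k}$.
\end{itemize}
In both cases the derivatives tend to $0$ as $t\downarrow0$, locally uniformly in $(x,y)\ne0$. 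This gives $C^\infty$ matching with the zero extension and continuity of each $\partial^\alpha\gamma$ on a neighborhood of $S$. Compactness of $S$ then yields the bound, and Step 1 gives the estimate. The particular cases stated in the proposition are $(l,m,n)=(0,0,0)$ and $(0,1,0)$.
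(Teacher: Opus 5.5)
Your proof is correct, but it is organized differently from the paper's. The paper argues by explicit computation. For $(l,m,n)=(0,0,0)$ it uses $s^k/k!\le e^{s}$ together with $Q\gtrsim\abs{x}^2+\abs{ty}^2$ to bound $\abs{x}^{4d/3}\abs{\gamma}$, $\abs{y}^{4d}\abs{\gamma}$ and $\abs{t}^{2d}\abs{\gamma}$ separately. After dividing by $t^{2d}$, these weights become powers of $\abs{x}^2/t^3$, of $\abs{y}^2/t$, or $1$, and each is absorbed by $e^{-Q/t^3}$. The derivatives are dismissed with ``in the similar way'', which in practice means expanding $\partial_{x_i}^l\partial_{y_j}^m\partial_t^n\gamma$ into terms $x^a y^b t^{-c}e^{-Q/t^3}$ and repeating the bound term by term.

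You instead use the exact homogeneity $\gamma\circ\delta(\lambda)=\lambda^{-4d}\gamma$ to reduce everything to a supremum over the compact set $\setdef{w}{\norm{w}=1}$. The only analytic input is then the qualitative fact that every derivative of $\gamma$ tends to $0$ as $t\downarrow 0$ away from the origin. You extract this from the same quadratic-form bound: $Q/t^3\gtrsim t^{-3}$ near $x_0\neq 0$, and $Q/t^3\gtrsim t^{-1}$ near $\abs{y_0}=1$. What this buys is a single uniform argument for all $(l,m,n)$ in place of the paper's term-by-term ``similar way''. The cost is an inexplicit constant coming from compactness rather than from a formula.

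Your route also isolates what is special about $\gamma$ (and hence $\gamma_1$). The kernel $\gamma_2$ is just as homogeneous, of degree $-(4d+2)$, but your compactness step fails for it because it is unbounded near $(x_0,0,0)$ with $x_0\neq 0$. Indeed $\hat{\gamma}(\xi,0,t)=c\,t^{-d/2}e^{-t^3\abs{\xi}^2/12}$, so $\abs{\gamma_2(x,0,t)}$ behaves like $t^{-d/2}\abs{x}^{-d-2/3}$ as $t\downarrow 0$. This is why the paper has to go through the Fourier-side Lemma~\ref{lem:decay of s-derivative} and settle for the mixed bounds with the extra factor $(\abs{y}+\abs{t}^{1/2})^{-d}$.
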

	\begin{proof}
		We give a proof only for the case $(l,m,n)=(0,0,0)$.
		Other estimates can be proved in the similar way.
		From the Taylor expansion of $\ce^{x}$,
		we have $x^k/k!\le \ce^{k}$ for $x\ge 0$ and thus
		\begin{align}
			\abs{x}^{4d/3}\abs{\gamma(z)}
			& \lesssim \prn*{\frac{\abs{x}^2}{t^3}}^{2d/3}
			\exp\prn*{-\frac{1}{t^3}(3\abs{x}^2-3x\cdot ty+\abs{ty}^2)}
			\lesssim 1,\\
			\abs{y}^{4d}\abs{\gamma(z)}
			& \lesssim \prn*{\frac{\abs{ty}^2}{t^3}}^{2d}
			\exp\prn*{-\frac{1}{t^3}(3\abs{x}^2-3x\cdot ty+\abs{ty}^2)}
			\lesssim 1,\\
			\abs{t}^{2d}\abs{\gamma(z)}
			& \lesssim
			\exp\prn*{-\frac{1}{t^3}(3\abs{x}^2-3x\cdot ty+\abs{ty}^2)}
			\lesssim 1.
		\end{align}
		Hence $\norm{z}^{4d}\abs{\gamma(z)}$ is bounded from above.
	\end{proof}
\subsection{Estimates for \texorpdfstring{$\gamma_2$}{gamma2} and \texorpdfstring{$\gamma_3$}{gamma3}}

	\begin{lemma}\label{lem:decay of s-derivative}
		Define the set $V_c$ as
		\begin{equation}
			V_c=\operatorname{span}\setdef*{
				\frac{\xi^{\alpha}\eta^{\beta}}
				{t^{(c-3\abs{\alpha}-\abs{\beta})/2}}
				\ce^{-F(\xi,\eta,t)}
			}{\text{$\alpha,\beta$: multiindices}}
		\end{equation}
		for an integer $c$.
		If $c$ is non-negative and $f\in V_c$, then
		\begin{equation}\label{eq:decay of s-derivative}
			\abs*{\int \xi^{\gamma} \abs{\xi}^{\theta}
			f(\xi,\eta,t)\ce^{\iu(x\cdot\xi+y\cdot \eta)}
				\dd{\xi}\dd{\eta}}
			\lesssim \frac{1}{\norm{z}^{3d+3\abs{\gamma}+3\theta}
				(\abs{y}+\abs{t}^{1/2})^{d+c}}
		\end{equation}
		for any multiindex $\gamma$ and $\theta \in(0,1)$.
	\end{lemma}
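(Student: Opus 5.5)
By linearity it suffices to treat a single generator
\begin{equation}
f=\xi^{\alpha}\eta^{\beta}t^{-(c-3\abs{\alpha}-\abs{\beta})/2}\ce^{-F(\xi,\eta,t)} .
\end{equation}
Write $I(z)$ for the integral in \eqref{eq:decay of s-derivative}. The plan is to first use scaling to reduce to $t=1$. Substituting $\xi=t^{-3/2}\xi'$ and $\eta=t^{-1/2}\eta'$ uses the homogeneity $F(\xi,\eta,t)=F(\xi',\eta',1)$, and gives
\begin{equation}
I(x,y,t)=t^{-(4d+3\abs{\gamma}+3\theta+c)/2}\,I(t^{-3/2}x,t^{-1/2}y,1).
\end{equation}
The powers of $t$ coming from $\xi^\alpha\eta^\beta$ and from the prefactor combine to $t^{-c/2}$. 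The right-hand side of \eqref{eq:decay of s-derivative} has the same homogeneity under $\delta(\lambda)$, namely total degree $4d+3\abs{\gamma}+3\theta+c$. So it suffices to prove, for $t=1$,
\begin{equation}
\abs{J(x,y)}\lesssim \frac{1}{(1+\abs{x})^{d+\abs{\gamma}+\theta}},\qquad J(x,y)=\int \xi^{\gamma+\alpha}\abs{\xi}^\theta\eta^\beta\ce^{-F(\xi,\eta,1)}\ce^{\iu(x\cdot\xi+y\cdot\eta)}\dd\xi\dd\eta .
\end{equation}
With $t=1$ the right-hand side of \eqref{eq:decay of s-derivative} is comparable to $(\abs{x}^{1/3}+\abs{y}+1)^{-3d-3\abs\gamma-3\theta}(\abs{y}+1)^{-d-c}$. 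If $\abs{x}^{1/3}\lesssim\abs{y}+1$, this is bounded below by $(1+\abs{y})^{-M}$ for a fixed $M=4d+3\abs{\gamma}+3\theta+c$. If $\abs{x}^{1/3}\gtrsim \abs{y}+1$, it is comparable to $\abs{x}^{-d-\abs\gamma-\theta}(1+\abs{y})^{-d-c}$. So I need the joint decay
\begin{equation}
\abs{J}\lesssim (1+\abs{x})^{-d-\abs\gamma-\theta}(1+\abs{y})^{-N}\qquad\text{for every }N .
\end{equation}

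Decay in $y$ is the easy part. I integrate by parts in $\eta$, since the symbol is smooth and Schwartz in $\eta$. Each $\partial_\eta$ falls on a polynomial or on $\ce^{-F}$, and only produces factors polynomial in $(\xi,\eta)$. Because $F\gtrsim\abs\xi^2+\abs\eta^2$, every resulting term is again of the form $\text{poly}(\xi,\eta)\abs\xi^\theta\ce^{-F}$. Hence it suffices to prove the $x$-decay $(1+\abs{x})^{-d-\abs\gamma-\theta}$ uniformly in $y$ for all symbols of the form
\begin{equation}
g(\xi,\eta)=\xi^{\gamma+\alpha'}\abs\xi^\theta\eta^{\beta'}\ce^{-F(\xi,\eta,1)} .
\end{equation}

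The main obstacle is the $x$-decay, because the symbol is singular at $\xi=0$. Its homogeneity there is $\abs{\xi}^{\abs{\gamma}+\theta}$, at least, and this is exactly what limits the decay to order $d+\abs\gamma+\theta$. Boundedness for $\abs{x}\le1$ is trivial. For $\abs{x}\ge1$ I would take a smooth cutoff $\chi(\abs{x}\xi)$ and split $g=g\chi+g(1-\chi)$.

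On the low part $\abs\xi\lesssim\abs{x}^{-1}$, I estimate directly. There $\abs{g}\lesssim\abs\xi^{\abs\gamma+\theta}$ times a Gaussian in $\eta$, and integrating gives $\abs{x}^{-d-\abs\gamma-\theta}$.

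On the high part I integrate by parts $k>d+\abs{\gamma}+\theta$ times in $\xi$ along the direction of $x$. I use the standard bound
\begin{equation}
\abs{\partial_\xi^k(\xi^{\gamma}\abs{\xi}^{\theta}\cdots)}\lesssim\abs\xi^{\abs\gamma+\theta-k}(1+\abs{\xi}+\abs{\eta})^{K}\ce^{-c(\abs\xi^2+\abs\eta^2)} ,
\end{equation}
where derivatives falling on the cutoff give comparable terms supported where $\abs\xi\sim\abs{x}^{-1}$. This yields
\begin{equation}
\abs{x}^{-k}\int_{\abs\xi\gtrsim\abs{x}^{-1}}\abs\xi^{\abs\gamma+\theta-k}\dd\xi\sim\abs{x}^{-d-\abs\gamma-\theta} ,
\end{equation}
with the $\eta$-integral and the large-$\xi$ tail controlled by the Gaussian. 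This is the standard Littlewood–Paley/homogeneous-symbol argument, and it also covers the case where $\alpha'$ adds extra powers of $\xi$, since those only improve matters near $0$.

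Combining the $y$- and $x$-decay gives the bound for $J$. Undoing the scaling then gives \eqref{eq:decay of s-derivative}.
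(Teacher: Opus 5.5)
Your proof is correct and follows essentially the same route as the paper: decay in $y$ comes from integrating by parts in $\eta$, and decay in $x$ comes from cutting off at $\abs{\xi}\sim\abs{x}^{-1}$, estimating the low-frequency piece directly and integrating by parts more than $d+\abs{\gamma}+\theta$ times in $\xi$ on the high-frequency piece. The only real difference is organizational: you first reduce to $t=1$ using the exact homogeneity $F(\xi,\eta,t)=F(t^{3/2}\xi,t^{1/2}\eta,1)$, whereas the paper keeps $t$ general and tracks the scaling through the graded classes $V_c$ (for example, $\partial_\eta^{d+c}f\in V_{-d}$ gives $t$-uniform $L^1_\eta$ bounds), and your reduction makes the final case split $\abs{x}^{1/3}\gtrless\abs{y}+\abs{t}^{1/2}$ more transparent while not actually needing $c\ge 0$.
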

	\begin{proof}
		Note that 
		$\nabla_\xi f\in V_{c-3}$, $\nabla_\eta f\in V_{c-1}$ and 
		$\partial_t f\in V_{c-2}$ if $f\in V_c$.
		Moreover, we have the estimate
		\begin{equation}\label{eq:decay for Phi}
			\int_{\bbR^{2d}}
			\abs*{\xi^{\gamma} \abs{\xi}^{\theta} f(\xi,\eta,t)}\dd{\xi}\dd{\eta}
			\lesssim \frac{1}{t^{(4d+3\abs{\gamma}+3\theta+c)/2}}
		\end{equation}
		for $f\in V_c$.
		Take $f\in V_c$ and define $\Phi$ as 
		\begin{equation}
			\Phi(x,y,t)
			=\int \xi^{\gamma} \abs{\xi}^{\theta} 
				f(\xi,\eta,t)\ce^{\iu(x\cdot\xi+y\cdot \eta)}
			\dd{\xi}\dd{\eta}.
		\end{equation}
		Applying \eqref{eq:decay for Phi}
		for $\partial_{\eta_i}^{4d+2+c}f, t^{(4d+2+c)/2}f\in V_{-(4d+2)}$,
		we obtain 
		\begin{align}
			(\abs{y}+\abs{t}^{1/2})^{4d+2}\abs{\Phi(x,y,t)}
			\lesssim 1.
		\end{align}
		
		Let us find the the decay rate of $\Phi$ for $\abs{x}$.
		Take a $C^{\infty}$ function $\chi\colon \bbR^d\to \bbR$ 
		supported in the ball with center the origin and radius 2
		such that $\chi(\xi)=1$ for $\abs{\xi}\le 1$ and
		set $\phi_R(\xi)=\chi(\xi/R)$ for $R>0$.
		We decompose $\Phi$ using $1=\phi_R+(1-\phi_R)$.
		We have the estimate
		\begin{align}\MoveEqLeft
			\abs*{y_j^{d+c}\int_{\bbR^d}\int_{\bbR^d}
				\xi^{\gamma}\abs{\xi}^{\theta} f(\xi,\eta,t)\phi_R(\xi)
				\ce^{\iu(x\cdot\xi+y\cdot \eta)}\dd{\xi}\dd{\eta}}\\
			&\lesssim 
			\int_{\abs{\xi}\le 2R}\abs{\xi}^{\abs{\gamma}+\theta}
				\int_{\bbR^d}
				\abs*{\pdv[d+c]{f}{\eta_j}(\xi,\eta,t)}
				\dd{\eta}\dd{\xi}
			\lesssim 
				R^{d+\abs{\gamma}+\theta} 
				\norm*{\pdv[d+c]{f}{\eta_j}}_{L_{\xi,t}^{\infty} L_{\eta}^1}
		\end{align}
		because $\partial_{\eta_j}^{d+c}f\in V_{-d}$.
		Let $n$ be a non-negative integer. By the Leibniz rule,
		\begin{align}\MoveEqLeft 
			\pdv[n]{}{\xi_i}
			\prn*{\xi^{\gamma}\abs{\xi}^{\theta} \pdv[d+c]{f}{\eta_j}(\xi,\eta,t)
			(1-\phi_R(\xi))}\\
			& =\sum_{k_1+k_2+k_3=n}
			\frac{n!}{k_1! k_2! k_3!}
			\pdv[k_1]{\xi^{\gamma}\abs{\xi}^{\theta}}{\xi_i}
			\cdot
			\frac{\partial^{k_2+d+c}f}
			{\partial{\xi_i}^{k_2}\, \partial^{d+c}{\eta_j}}
			(\xi,\eta,t)
			\cdot \pdv[k_3]{}{\xi_i}(1-\phi_R(\xi)).
		\end{align}
		We observe
		\begin{align}
			\abs*{\pdv[k_1]{\xi^{\gamma}\abs{\xi}^{\theta}}{\xi_i}}
			&\lesssim \abs{\xi}^{\theta+\abs{\gamma}-k_1},
			\\
			\abs*{\int_{\bbR^{d}}\frac{\partial^{k_2+d+c}f}
			{\partial{\xi_i}^{k_2}\, \partial^m{\eta_j}}(\xi,\eta,t)\dd{\eta}}
			&\lesssim \abs{\xi}^{-k_2},
			\\
			\abs*{\pdv[k_3]{}{\xi_i}(1-\phi_R(\xi))}
			&\le\frac{1}{R^{k_3}} 
				\max_{\xi}\abs*{\pdv[k_3]{}{\xi_i}(1-\chi(\xi))}
			\lesssim \abs{\xi}^{-k_3};
		\end{align}
		hence if $n>d+s$, then
		\begin{align}\label{eq:estimate for Phi large xi}
			\MoveEqLeft
			\abs*{x_i^ny_j^{d+c}\int_{\bbR^d}\int_{\bbR^d}
				\xi^{\gamma}\abs{\xi}^{\theta} f(\xi,\eta,t)(1-\phi_R(\xi))
				\ce^{\iu(x\cdot\xi+y\cdot \eta)}\dd{\xi}\dd{\eta}}\\
			&\lesssim
			\int_{\abs{\xi}\ge R}
			\abs{\xi}^{\abs{\gamma}+\theta-n}\dd{\xi}
			\sim \frac{R^{d+\abs{\gamma}+\theta}}{R^n}.
		\end{align}
		Summing \eqref{eq:estimate for Phi large xi} over $i=1,\dotsc,d$
		yields
		\begin{equation}
			\abs*{y_j^{d+c}\int_{\bbR^d}\int_{\bbR^d}
				\abs{\xi}^{\theta} f(\xi,\eta,t)(1-\phi_R(\xi))
				\ce^{\iu(x\cdot\xi+y\cdot \eta)}\dd{\xi}\dd{\eta}}
			\lesssim \frac{R^{d+\abs{\gamma}+\theta}}{(\abs{x}R)^n}.
		\end{equation}
		Consequently, 
		\begin{equation}
			\abs{y_j^{d+c}\Phi(x,y,t)}\lesssim 
				R^{d+\abs{\gamma}+\theta}
				+\frac{R^{d+\abs{\gamma}+\theta}}{(\abs{x}R)^n}.
		\end{equation}
		Taking $R=1/\abs{x}$
		and summing this over $j=1,\dotsc,d$
		gives 
		\begin{equation}
			\abs{\Phi(x,y,t)}
			\lesssim \frac{1}{\abs{x}^{d+\abs{\gamma}+\theta}\abs{y}^{d+c}}.
		\end{equation}
		In the same manner we can see that 
		$\abs{\Phi(x,y,t)}
			\lesssim 1/\abs{x}^{d+\abs{\gamma}+\theta}\abs{t}^{(d+c)/2}$.
	\end{proof}
	\begin{proposition}\label{prop:pointwise estimate gamma2,3}
		If $l+m+n=0$ or $1$, then
		the following estimates hold:
		\begin{align}
			\abs{\partial_{x_i}^l\partial_{y_j}^m \partial_t^n\gamma_2(z)}
			\lesssim \frac{1}{\norm{z}^{3d+3l+2}
				(\abs{y}+\abs{t}^{1/2})^{d+m+2n}},\\
			\abs{\partial_{x_i}^l\partial_{y_j}^m \partial_t^n\gamma_3(z)}
			\lesssim \frac{1}{\norm{z}^{3d+3l+1}
				(\abs{y}+\abs{t}^{1/2})^{d+1+m+2n}}.
		\end{align}
	\end{proposition}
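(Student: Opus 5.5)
The plan is to write $\gamma_2$, $\gamma_3$ and their first derivatives as Fourier integrals of the form treated in Lemma \ref{lem:decay of s-derivative}, and then read off the bounds from \eqref{eq:decay of s-derivative}. Since $\gamma=\fourier\inv(\ce^{-F})$, we have, up to the constant $(2\pi)^{-2d}$,
\begin{equation}
	\gamma_2(z)=\int \abs{\xi}^{2/3}\ce^{-F(\xi,\eta,t)}\ce^{\iu(x\cdot\xi+y\cdot\eta)}\dd{\xi}\dd{\eta},\qquad
	\partial_{y_k}\abs{\nabla_x}^{1/3}\gamma(z)=\int \iu\eta_k\abs{\xi}^{1/3}\ce^{-F}\ce^{\iu(x\cdot\xi+y\cdot\eta)}\dd{\xi}\dd{\eta}.
\end{equation}
The function $\ce^{-F}$ belongs to $V_0$. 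The function $\eta_k\ce^{-F}=t^{-(1-1)/2}\eta_k\ce^{-F}$ belongs to $V_1$, taking $\beta=e_k$, $\alpha=0$, $c=1$.

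For $\gamma_2$ with $l=m=n=0$, apply the lemma with $\theta=2/3$, $\gamma=0$, $c=0$. This gives $\norm{z}^{-(3d+2)}(\abs{y}+\abs{t}^{1/2})^{-d}$. For $\gamma_3$, take $\theta=1/3$, $c=1$, which gives $\norm{z}^{-(3d+1)}(\abs{y}+\abs{t}^{1/2})^{-(d+1)}$.

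The derivatives are handled by noting how each one shifts the parameters in the lemma:
\begin{itemize}
	\item $\partial_{x_i}$ multiplies the symbol by $\iu\xi_i$, which raises $\abs{\gamma}$ by one and produces the extra factor $\norm{z}^{-3}$.
	\item $\partial_{y_j}$ multiplies by $\iu\eta_j$, which maps $V_c$ to $V_{c+1}$ and produces the extra factor $(\abs{y}+\abs{t}^{1/2})^{-1}$.
	\item $\partial_t$ acts only on $\ce^{-F}$, giving $-\partial_tF\,\ce^{-F}=-(t^2\abs{\xi}^2+2t\xi\cdot\eta+\abs{\eta}^2)\ce^{-F}$.
\end{itemize}
For the last item, each term lies in $V_2$. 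For example, $t^2\abs{\xi}^2=\xi^{2e_i}/t^{(2-6)/2}$ summed over $i$, so $\abs{\alpha}=2$ and $c=2$ fit. Similarly $t\,\xi_i\eta_i$ has $\abs{\alpha}=\abs{\beta}=1$ with exponent $(2-3-1)/2=-1$, and $\eta_i^2$ has exponent $0$. So $\partial_t$ maps $V_c$ into $V_{c+2}$, which is the factor $(\abs{y}+\abs{t}^{1/2})^{-2}$. In every case $c$ stays non-negative, so the lemma applies.

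The main obstacle is the lemma's output itself. It only gives decay $\norm{z}^{-(3d+\cdots)}$ in terms of the full norm $\norm{z}$ if it handles all regimes of $\norm{z}$. Reading the proof, the lemma shows three things:
\begin{itemize}
	\item $(\abs{y}+\abs{t}^{1/2})^{4d+2}\abs{\Phi}\lesssim1$. More precisely, the exponent is $4d+\abs{\gamma}\cdot3+3\theta+c$ after tracking the lemma's scaling.
	\item $\abs{\Phi}\lesssim \abs{x}^{-(d+\abs{\gamma}+\theta)}\abs{y}^{-(d+c)}$.
	\item The analogous bound with $\abs{t}^{(d+c)/2}$ in place of $\abs{y}^{d+c}$.
\end{itemize}
Combining these via $\min$ and the homogeneity $\abs{x}^{1/3}\lesssim\norm{z}$ gives the stated product form. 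I will take the lemma as stated.

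One point must be checked: $\theta$ lies in $(0,1)$ for both $2/3$ and $1/3$, as the lemma requires. There is also the question of the time variable, since $\gamma=0$ for $t\le0$. For $t\le 0$ the estimates are trivial. For $t>0$ the Fourier representation above is valid, so differentiating under the integral is justified by the Gaussian decay of $\ce^{-F}$, which follows from $F\gtrsim t^3\abs{\xi}^2+t\abs{\eta}^2$.
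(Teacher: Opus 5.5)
Your proposal is correct and takes the same approach as the paper, whose proof is the one line ``apply Lemma~\ref{lem:decay of s-derivative} with $\theta=2/3$ for $\gamma_2$ and $\theta=1/3$ for $\gamma_3$.'' Your parameter bookkeeping fills in what that line leaves implicit: $\ce^{-F}\in V_0$, $\eta_k\ce^{-F}\in V_1$, $\partial_{x_i}$ raises $\abs{\gamma}$ by one, $\partial_{y_j}$ maps $V_c$ to $V_{c+1}$, and $\partial_t$ maps $V_c$ to $V_{c+2}$ --- the lemma's proof writes $V_{c-2}$ for $\partial_t$, which is a slip, and your $V_{c+2}$ is the correct shift.
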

	\begin{proof}
		Apply Lemma \ref{lem:decay of s-derivative} to the derivatives of
		$\prn{\abs{\xi}^{\theta}\ce^{-F(\xi,\eta, t)}}^{\vee}$
		with $\theta=2/3$ for $\gamma_2$ and $\theta=1/3$ for $\gamma_3$.
	\end{proof}

\subsection{Estimates for \texorpdfstring{$\gamma_4$}{gamma4}}
	\label{subsec:estimates for gamma4}
	In this section,
	we show the pointwise estimate of $\gamma_4$.
	For a tuple of multiindices 
	$\alpha=(\alpha_1,\alpha_2,\alpha_3)$,
	we write
	\begin{equation}
		D^{\alpha}f
		=\partial_{x}^{\alpha_1}\partial_{y}^{\alpha_2}\partial_t^{\alpha_3}f,
		\qquad 
		\abs{\alpha}=3\abs{\alpha_1}+\abs{\alpha_2}+2\alpha_3.
	\end{equation}
	We are interested in the case of $\abs{\alpha}\ge 1$ 
	because our aim is the pointwise estimate of $\gamma_4$.
	We first investigate pointwise estimates 
	of the Fourier transform with respect to $(x,t)$ of $\gamma$.
	If a non-negative integer $n$ satisfies
	$d+\abs{\alpha}+12n-2>0$, then
	from Proposition \ref{prop:pointwise estimte gamma}
	\begin{align}
		\abs{(\abs{\tau+y\cdot \xi}^6+\abs{\xi}^4)^{n} \fourier 
			D^{\alpha}\gamma}
		&\sim \abs{\fourier (\prn{\partial_t+y\cdot \nabla_x}^{6n}
			+\prn{\laplacian_x}^{2n})
			D^{\alpha}\gamma}\\
		&\lesssim 
			\int\frac{1}{\norm{z}^{4d+\abs{\alpha}+12n}}\dd{x}\dd{t}\\
		&\lesssim 
			\frac{1}{\abs{y}^{d+\abs{\alpha}+12n-2}}.
	\end{align}
	Therefore, we have 	
	\begin{equation}\label{eq:estimate of gamma(x,t)-Fourier large mu}
		\abs{\fourier D^{\alpha}\gamma}
		\lesssim 
		\frac{1}{\abs{y}^{d+\abs{\alpha}+12\nu-2}
			(\abs{\tau+y\cdot \xi}^6+\abs{\xi}^4)^{\nu}}
	\end{equation}
	for any non-negative number $\nu$
	under the assumption of $d\ge 2$ and $\abs{\alpha}\ge 1$.
	However we need estimates of $\fourier D^{\alpha}\gamma$
	for $d=1$ and small $\nu$, which are not covered by 
	the computation above.
	We will see that \eqref{eq:estimate of gamma(x,t)-Fourier large mu}
	holds for strictly positive $\nu$ even if $d=1$.

	Since we already have \eqref{eq:estimate of gamma(x,t)-Fourier large mu}
	for large $\nu$, it suffices to show it for $\nu>0$ 
	arbitrarily small.
	Pick a cut-off function $\chi\in C\sub{c}^{\infty}(\bbR)$ 
	such that $\chi(t)=1$ for $\abs{t}\le 1$ and 
	$\chi(t)=0$ for $\abs{t}\ge 2$,
	and set $\phi_R(x,y,t)=\chi\prn[\big]{(\abs{x-ty}^4+\abs{t}^6)/R^{12}}$.
	We observe that
	\begin{align}
		\abs{(\partial_t+y\cdot\nabla_x)\phi_R(x,y,t)}
		&=\abs*{\chi'\prn*{\frac{\abs{x-ty}^4+\abs{t}^6}{R^{12}}}
			\frac{6t^5}{R^{12}}
		}\\
		&\lesssim \frac{1}{(\abs{x-ty}^4+\abs{t}^6)^{1/6}}
		\sim \frac{1}{\norm{(x-ty,0,t)}^{2}}.
	\end{align}
	Moreover, 
	\begin{equation}\label{eq:estimate phi_R}
	\begin{split}
		\abs{(\partial_t+y\cdot\nabla_x)^n\phi_R(x,y,t)}
		\lesssim \frac{1}{\norm{(x-ty,0,t)}^{2n}},\\
		\abs{\laplacian_x^n\phi_R(x,y,t)}
		\lesssim \frac{1}{\norm{(x-ty,0,t)}^{6n}}
	\end{split}
	\end{equation}
	hold for $n\ge 0$.
	Let $R>0, \abs{\alpha}\ge 1$ and $0<\nu<3d+2$.
	Then, 
	\begin{align}
		\abs{\fourier (\phi_R D^{\alpha}\gamma)}
		&\le \int_{\abs{x-ty}^4+\abs{t}^6\lesssim R^{12}}
			\frac{1}{\norm{z}^{4d+\abs{\alpha}}}\dd{x}\dd{t}\\
		&\le \int_{\abs{x}^4+\abs{t}^6\lesssim R^{12}}
			\frac{1}{\norm{(x+ty,t,y)}^{4d+\abs{\alpha}}}\dd{x}\dd{t}\\
		&\lesssim \int_{\abs{x}^4+\abs{t}^6\lesssim R^{12}}
			\frac{1}{\norm{z}^{4d+\abs{\alpha}}}\dd{x}\dd{t}\\
		&\le \frac{1}{\abs{y}^{d+\abs{\alpha}-2+\nu}}
			\int_{\abs{x}^4+\abs{t}^6\lesssim R^{12}} 
				\frac{1}{\norm{(x,0,t)}^{3d+2-\nu}}\dd{x}\dd{t}\\
		&\le \frac{R^{\nu}}{\abs{y}^{d+\abs{\alpha}-2+\nu}}.
		\label{al:estimate fourier D alpha gamma small area}
	\end{align}
	Here, we have used $\norm{z}\sim \norm{(x\pm ty,y,t)}$ in the 
	third line,
	which follows from Young's inequality.
	Fix $n>12/\nu$. We have
	\begin{align}\MoveEqLeft
		\abs[\big]{(\abs{\tau+y\cdot\xi}^6+\abs{\xi}^4)^n 
		\fourier \sqb{(1-\phi_R)D^{\alpha}\gamma}}\\
		&\sim \abs[\big]{\fourier\sqb{(\prn{\partial_t+y\cdot \nabla_x}^{6n}+
			\laplacian_x^{2n})(1-\phi_R)D^{\alpha}\gamma}}\\
		&\lesssim 
		\int_{\abs{x-ty}^4+\abs{t}^6\gtrsim R^{12}}
			\frac{1}{\norm{z}^{4d+\abs{\alpha}}
				\norm{(x-ty,0,t)}^{12n}
			}\dd{x}\dd{t}
			\qquad \text{by \eqref{eq:poitwise estimate gamma} and \eqref{eq:estimate phi_R}}\\
		&\lesssim 
			\int_{\abs{x}^4+\abs{t}^6\gtrsim R^{12}}
				\frac{1}{\norm{z}^{4d+\abs{\alpha}}\norm{(x,0,t)}^{12n}}
			\dd{x}\dd{t}
			\qquad  \text{by $\norm{(x+ty,y,t)}\sim \norm{z}$}\\
		&\lesssim 
			\frac{1}{\abs{y}^{d+\abs{\alpha}-2+\nu}}
			\int_{\abs{x}^4+\abs{t}^6\gtrsim R^{12}}
				\frac{1}{\norm{(x,0,t)}^{3d+2+12n-\nu}}\dd{x}\dd{t}\\
		&\lesssim 
			\frac{R^{\nu-12n}}{\abs{y}^{d+\abs{\alpha}-2+\nu}}.
		\label{al:estimate fourier D alpha gamma large area}
	\end{align}
	Letting $R=(\abs{\tau+y\cdot\xi}^6+\abs{\xi}^4)^{-1/12}$
	in \eqref{al:estimate fourier D alpha gamma small area}
	and \eqref{al:estimate fourier D alpha gamma large area}
	yields
	\begin{equation}
		\abs{\fourier D^{\alpha}\gamma}
		\lesssim \frac{1}{\abs{y}^{d+\abs{\alpha}-2+\nu}
			(\abs{\tau+y\cdot\xi}^6+\abs{\xi}^4)^{\nu/12}
		}.
	\end{equation}
	In conclusion, 
	we have \eqref{eq:estimate of gamma(x,t)-Fourier large mu}
	for any $\nu>0$ by interpolation.
	Especially,
	\begin{equation}\label{eq:estimate fourier gamma4}
		\abs{\fourier \nabla_y\gamma}
		\lesssim\frac{1}{\abs{y}^{d-1+\nu}
		(\abs{\tau+y\cdot\xi}^6+\abs{\xi}^4)^{\nu/12}
		}.
	\end{equation}
	By the similar computation, 
	we have for any $\nu>0$,
	\begin{align}\label{al:estimate gamma4 frequency}
		\abs{(\nabla_\xi-y\partial_\tau)^{\alpha} \fourier\nabla_y\gamma}
		&\sim 
		\abs[\big]{\fourier\sqb[\big]{
			(x-ty)^{\alpha}\nabla_y\gamma}}\\
		&\lesssim \frac{1}{\abs{y}^{d-1+\nu}
		(\abs{\tau+y\cdot\xi}^6+\abs{\xi}^4)^{(3\abs{\alpha}+\nu)/12}
		},\\
		\abs{\partial_\tau^n \fourier\nabla_y\gamma}
		&\sim \abs{\fourier \sqb{t^n \nabla_y\gamma}}\\
		&\lesssim \frac{1}{\abs{y}^{d-1+\nu}
			(\abs{\tau+y\cdot\xi}^6+\abs{\xi}^4)^{(2n+\nu)/12}
		}.
	\end{align}

	Let us show the pointwise estimate of $\gamma_4$.
	\begin{proposition}\label{prop:pointwise estimate gamma4}
		If $l+m+n=0$ or $1$, then
		\begin{equation}\label{eq:pointwise estimate gamma4}
			\abs{\partial_{x_i}^l\partial_{y_j}^m \partial_t^n\gamma_4(z)}
			\lesssim 
			\frac{1}{\abs{y}^{d-1+m+\nu}\norm{z}^{3d+3+3l+2n-\nu}}
		\end{equation}
	\end{proposition}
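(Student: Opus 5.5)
We write $\gamma_4$ via the partial Fourier transform in $(x,t)$ with $y$ frozen:
\[
\partial_{x_i}^l\partial_{t}^n\gamma_4(x,y,t)
=\frac{1}{(2\pi)^{d+1}}\int (\iu\xi_i)^l(\iu\tau)^n
(\abs{\tau+y\cdot\xi}^6+\abs{\xi}^4)^{1/12}\,
\fourier\nabla_y\gamma(\xi,y,\tau)\,
\ce^{\iu(x\cdot\xi+t\tau)}\dd{\xi}\dd{\tau}.
\]
The symbol $\abs{\tau+y\cdot\xi}$ suggests the change of variables $\sigma=\tau+y\cdot\xi$. In the new variables the phase becomes $(x-ty)\cdot\xi+t\sigma$, and the weight becomes $\rho(\xi,\sigma)=(\abs{\sigma}^6+\abs{\xi}^4)^{1/12}$, which is homogeneous of degree $1$ under the dilation $(\xi,\sigma)\mapsto(\lambda^3\xi,\lambda^2\sigma)$. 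This dilation is dual to the quasi-norm $\norm{(x-ty,0,t)}$. Under the change, the operators $\nabla_\xi-y\partial_\tau$ and $\partial_\tau$ become plain $\nabla_\xi$ and $\partial_\sigma$. The bounds \eqref{al:estimate gamma4 frequency} then say that $g:=\fourier\nabla_y\gamma$, viewed as a function of $(\xi,\sigma)$, satisfies symbol-type estimates
\[
\abs{\partial_\xi^{\alpha}\partial_\sigma^{k}g}\lesssim \abs{y}^{-(d-1+\nu)}\rho^{-\nu-3\abs{\alpha}-2k},
\]
with only the arbitrarily small gain $\rho^{-\nu}$ at high frequency. Mixed derivatives are handled by the same computation applied to $(x-ty)^\alpha t^k\nabla_y\gamma$.

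The main step is a Littlewood–Paley/cutoff argument identical in spirit to the proof of Lemma \ref{lem:decay of s-derivative}. Put $X=x-ty$ and $R=\norm{(X,0,t)}^{-1}$, and split with the anisotropic cutoff $\chi(\rho(\xi,\sigma)/R)$.

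For the low-frequency part we use no integration by parts. We bound it by
\[
\int_{\rho\le 2R}\rho^{1+3l+2n}\abs{y}^{-(d-1+\nu)}\rho^{-\nu}\dd{\xi}\dd{\sigma}\sim \abs{y}^{-(d-1+\nu)}R^{3d+2+1+3l+2n-\nu},
\]
using that the homogeneous dimension of $(\xi,\sigma)$-space is $3d+2$. The factor $\tau^n=(\sigma-y\cdot\xi)^n$ must be treated separately. For $n=1$ the term $y\cdot\xi$ costs $\abs{y}\rho^3$; I expect this to be absorbed by trading powers of $\abs{y}$ against $\norm{z}$, or handled directly through $\partial_t=(\partial_t+y\cdot\nabla_x)-y\cdot\nabla_x$.

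For the high-frequency part we integrate by parts $N$ times. If $\abs{X}^{1/3}$ dominates we use $\nabla_\xi$, each application gaining $\abs{X}^{-1}\rho^{-3}$; if $\abs{t}^{1/2}$ dominates we use $\partial_\sigma$, each gaining $\abs{t}^{-1}\rho^{-2}$. Derivatives falling on $\rho^{1+\dots}$ and on the cutoff obey the same homogeneity, so for $N$ large the integral converges and yields the same bound with $R=\norm{(X,0,t)}^{-1}$. Combining the two parts gives
\[
\abs{y}^{-(d-1+\nu)}\,\norm{(x-ty,0,t)}^{-(3d+3+3l+2n-\nu)}.
\]

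Two steps remain. First, when $\abs{y}\gtrsim\norm{(x-ty,0,t)}$ we have $\norm{z}\sim\abs{y}+\norm{(x-ty,0,t)}$ (as in the paper, $\norm{z}\sim\norm{(x\pm ty,y,t)}$), and the bound above is too weak. In that regime we redo the estimate using the large-$\nu'$ version \eqref{eq:estimate of gamma(x,t)-Fourier large mu} with $\nu'$ chosen so that the powers balance, or simply bound the whole integral by the low-frequency estimate with $R=\abs{y}^{-1}$. Together these give $\abs{y}^{-(d-1+\nu)}\norm{z}^{-(3d+3+3l+2n-\nu)}$. Second, the $y$-derivative ($m=1$) acts both on $g$, with a $y_j$-derivative costing $\abs{y}^{-1}$ by rerunning the estimates with $\nabla_y$ replaced by $\partial_{y_j}\nabla_y$, and on the symbol through $\tau+y\cdot\xi$. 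The latter is harmless: $\partial_{y_j}\rho\sim\xi_j\rho^{-5}\sigma^5$, which costs $\rho^{3}\lesssim$ one $x$-derivative and is compensated in the same bookkeeping, or it is avoided by differentiating after the change of variables, where it appears as $-t\partial_{X_j}$.

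I expect the main obstacle to be the careful bookkeeping of the regime $\abs{y}$ large versus small, together with the extra terms for $n=1$ and $m=1$ produced by the shear $\tau\mapsto\tau+y\cdot\xi$.
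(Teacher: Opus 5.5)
Your proposal is correct and is essentially the paper's proof: the paper likewise cuts off in $(\abs{\tau+y\cdot\xi}^6+\abs{\xi}^4)^{1/12}$, gets decay in $\norm{(x-ty,0,t)}$ by multiplying by $\abs{x-ty}^{4n}+\abs{t}^{6n}$ (your integration by parts in the sheared variable $\sigma=\tau+y\cdot\xi$), treats the regime $\abs{y}\gtrsim\norm{(x-ty,0,t)}$ by combining the small-$\nu$ low-frequency and large-$\nu'$ high-frequency bounds with $R=\abs{y}^{-1}$, uses $\partial_t=(\partial_t+y\cdot\nabla_x)-y\cdot\nabla_x$ for $n=1$, and splits $\nabla_y\gamma_4$ into a symbol-derivative term and a $\partial_{y_j}\nabla_y\gamma$ term for $m=1$. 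One small slip: $\partial_{y_j}\rho=\tfrac12\xi_j\sigma^5\rho^{-11}$ (not $\xi_j\sigma^5\rho^{-5}$), which has size $\lesssim\rho^2$, so that term costs only one extra factor $\norm{z}^{-1}$ (the paper's bound for $\tilde{\gamma}_{4,1}$), consistent with your conclusion that it is harmless.
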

	\begin{proof}
		Let 
		$\psi_R(\xi,y,\tau)
			=\chi\prn[\big]{(\abs{\tau+y\cdot\xi}^6+\abs{\xi}^4)/R^{12}}$.
		Applying \eqref{eq:estimate fourier gamma4}
		for $\nu>0$ small enough, we obtain
		\begin{align}\MoveEqLeft
			\abs[\big]{\fourier\inv
				\sqb[\big]
				{\psi_R(\xi,y,\tau)\, (\abs{\tau+y\cdot\xi}^6+\abs{\xi}^4)^{1/12}
				\fourier_{x,t}\nabla_y\gamma}}\\
			&\lesssim\int_{\abs{\tau+y\cdot\xi}^6+\abs{\xi}^4\lesssim R^{12}}
				(\abs{\tau+y\cdot\xi}^6+\abs{\xi}^4)^{1/12}
				\abs{\fourier_{x,t}\nabla_y\gamma}\dd{\xi}\dd{\tau}\\
			&\lesssim \int_{\abs{\tau+y\cdot\xi}^6+\abs{\xi}^4\lesssim R^{12}}
				\frac{1}{\abs{y}^{d+1-2+\nu}}
				\prn{\abs{\tau+y\cdot \xi}^{1/2}+\abs{\xi}^{1/3}}^{1-\nu} \dd{\xi}\dd{\tau}\\
			&\lesssim 
				\frac{R^{3d+3-\nu}}{\abs{y}^{d+1-2+\nu}}.
				\label{al:estimate fourier gamma4 small}
		\end{align}
		Applying \eqref{eq:estimate fourier gamma4}
		for $\nu'>0$ large enough gives
		\begin{align}\MoveEqLeft
			\abs[\big]{\fourier\inv \sqb[\big]{(1-\psi_R)
				 (\abs{\tau+y\cdot\xi}^6+\abs{\xi}^4)^{1/12}
				\fourier_{x,t}\nabla_y\gamma}}\\
			&\lesssim \int_{\abs{\tau+y\cdot\xi}^6+\abs{\xi}^4\gtrsim R^{12}}
				(\abs{\tau+y\cdot\xi}^6+\abs{\xi}^4)^{1/12}
				\abs{\fourier_{x,t}\nabla_y\gamma}\dd{\xi}\dd{\tau}\\
			&\lesssim \int_{\abs{\tau+y\cdot\xi}^6+\abs{\xi}^4\lesssim R^{12}}
				\frac{1}{\abs{y}^{d+1-2+\nu}}
				\prn{\abs{\tau+y\cdot \xi}^{1/2}+\abs{\xi}^{1/3}}^{1-\nu} \dd{\xi}\dd{\tau}\\
			&\lesssim
			\frac{R^{3d+3-\nu'}}{\abs{y}^{d-1+\nu'}}.
		\end{align}
		Setting $R=\abs{y}^{-1}$ yields 
		\begin{equation}\label{eq:estimate gamma4 y}
			\abs{\gamma_4}
			\lesssim \frac{1}{\abs{y}^{4d+2}}.
		\end{equation}
		Fix $\nu>0$. If $n$ is large enough, then
		\begin{align}\MoveEqLeft
			\abs[\big]{(\abs{x-ty}^{4n}+\abs{t}^{6n})\fourier\inv 
				\sqb[\big]{(1-\psi_R)
				 (\abs{\tau+y\cdot\xi}^6+\abs{\xi}^4)^{1/12}
				\fourier_{x,t}\nabla_y\gamma}}\\
			&\lesssim \int_{\abs{\tau+y\cdot\xi}^6+\abs{\xi}^4\gtrsim R^{12}}
			\frac{1}{\abs{y}^{d-1+\nu}(\abs{\tau+y\cdot \xi}^{1/2}+\abs{\xi}^{1/3})^{\nu+12n-1}}\dd{\xi}\dd{\tau}\\
			&\lesssim \frac{R^{3d+2+1-12n-\nu}}{\abs{y}^{d-1+\nu}}.
		\end{align}
		Letting 
		$R=(\abs{\tau+y\cdot \xi}^{1/2}+\abs{\xi}^{1/3})^{-1}$ 
		in this estimate and \eqref{al:estimate fourier gamma4 small}
		yields
		\begin{equation}\label{eq:estimate gamma4 x-ty, t}
			\abs{\gamma_4(z)}\lesssim 
			\frac{1}{\abs{y}^{d-1+\nu}\norm{(x-ty,0,t)}^{3d+3-\nu}}.
		\end{equation}
		The estimates 
		\eqref{eq:estimate gamma4 y} and 
		\eqref{eq:estimate gamma4 x-ty, t}
		together with $\norm{z}\sim \norm{(x-ty,y,t)}$
		yield \eqref{eq:pointwise estimate gamma4} with $l=m=n=0$.
		Similarly,
		\begin{align}
			\abs{\nabla_x\gamma_4}
			&\lesssim
			\frac{1}{\abs{y}^{d-1+\nu}\norm{z}^{3d+6-\nu}}, \\
			\abs{(\partial_t+y\cdot\nabla_x)\gamma_4}
			&\lesssim
			\frac{1}{\abs{y}^{d-1+\nu}\norm{z}^{3d+5-\nu}}
		\end{align}
		and thus
		\begin{equation}
			\abs{\partial_t\gamma_4}
			\le \abs{(\partial_t+y\cdot\nabla_x)\gamma_4}
				+\abs{y\nabla_x\gamma_4}
			\lesssim
			\frac{1}{\abs{y}^{d-1+\nu}\norm{z}^{3d+5-\nu}}
		\end{equation}
	
		The estimate for $\nabla_y\gamma_4$	is slightly different.
		Indeed, 
		\begin{align}
			\nabla_y\gamma_4(z)
			&=
			\fourier\inv_{\xi,\tau}
				\sqb[\big]{
				\prn{\nabla_y(\abs{\tau+y\cdot\xi}^6+\abs{\xi}^4)^{1/12}}\,
				\fourier_{x,t}\nabla_y\gamma}
				+
			\fourier\inv_{\xi,\tau}
				\sqb[\big]{
				 (\abs{\tau+y\cdot\xi}^6+\abs{\xi}^4)^{1/12}
				\fourier_{x,t}\nabla_y^2\gamma}\\
			&\eqqcolon \tilde{\gamma}_{4,1}(z)+\tilde{\gamma}_{4,2}(z)
		\end{align}
		and 
		\begin{equation}
			\abs{\tilde{\gamma}_{4,1}(z)} \lesssim 
			\frac{1}{\abs{y}^{d-1+\nu}\norm{(x,y,t)}^{3d+4-\nu}}
		\end{equation}
		holds for $\nu>0$, but we have just
		\begin{equation}
			\abs{\tilde{\gamma}_{4,2}(z)}
			\lesssim 
			\frac{1}{\abs{y}^{d+\nu}\norm{(x,y,t)}^{3d+3-\nu}}
		\end{equation}
		for $\nu\ge 0$
		because we shall repeat the computation above
		using \eqref{eq:estimate of gamma(x,t)-Fourier large mu}
		with $\abs{\alpha}=2$.
		Therefore, 
		\begin{equation}
			\abs{\nabla_y\gamma_4(z)}
			\lesssim 
			\frac{1}{\abs{y}^{d+\nu}\norm{(x,y,t)}^{3d+3-\nu}}.
		\end{equation}
	\end{proof}

\section{Estimates for solutions to non-stationary problem}
	\label{sec:proof of Lp estimate}
\subsection{\texorpdfstring{$L^{2}$}{L2}-estimates}

	To obtain $L^2$-estimate, 
	we use the following theorem by Bouchut~\cite{Bouchut2002}.

	\begin{theorem}
		\label{thm:L2 bounded by Bouchut}
		If $Lu=f$ holds with
		\begin{equation}
			u,f\in L^2(\bbR^d_x\times\bbR^d_y\times\bbR_t),\qquad
			\nabla_y{u}\in L^2(\bbR^d_x\times\bbR^d_y\times\bbR_t),
		\end{equation}
		then $\laplacian_yu$ and $\abs{\nabla_x}^{3/2}u$ belong to
		$L^2(\bbR^d_x\times\bbR^d_y\times\bbR_t)$ and
		\begin{equation}
			\norm{\laplacian_yu}_2+\norm{\abs{\nabla_x}^{3/2}u}_2
			+\norm{\abs{\nabla_x}^{1/3}\nabla_yu}_2
			+\norm{(\abs{\partial_t+y\cdot\nabla_x}^{6}+\abs{\nabla_x}^4)^{1/12}
			\nabla_yu}_2
			\lesssim \norm{f}_2
		\end{equation}
		holds.
	\end{theorem}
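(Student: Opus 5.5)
The plan is to work entirely on the Fourier side in $(x,t)$, keeping $y$ as a physical variable. This is Bouchut's energy method. I read the $\abs{\nabla_x}^{3/2}$ in the statement as the $2/3$ power consistent with the rest of the paper.

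Let $\hat u(\xi,y,\tau)=\fourier_{x,t}u$. The equation $Lu=f$ becomes, for a.e.\ $(\xi,\tau)$,
\begin{equation}
	\iu(\tau+y\cdot\xi)\hat u-\laplacian_y\hat u=\hat f .
\end{equation}
The hypotheses $u,\nabla_yu,f\in L^2$ make every pairing below meaningful, after a routine regularization in $y$ if needed.

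First, multiply by $\overline{\hat u}$, integrate in $y$ and take real parts. The transport term is purely imaginary, so
\begin{equation}
	\norm{\nabla_y u}_2^2=\operatorname{Re}\agb{f,u}.
\end{equation}
Taking imaginary parts instead controls $\int(\tau+y\cdot\xi)\abs{\hat u}^2$. Second, to bound $\laplacian_y u$, I would pair the equation with $-\laplacian_y\overline{\hat u}$. The cross term is $\operatorname{Re}\int \iu(\tau+y\cdot\xi)\hat u\,(-\laplacian_y\overline{\hat u})$. Integrating by parts once in $y$, this reduces to $\operatorname{Re}\int \iu\,\xi\hat u\cdot\nabla_y\overline{\hat u}$; this is the commutator \eqref{eq:commutator} appearing. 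Thus
\begin{equation}
	\norm{\laplacian_y u}_2^2\le \norm{f}_2\norm{\laplacian_yu}_2+\abs{\agb{\nabla_xu,\nabla_yu}},
\end{equation}
and everything reduces to controlling the commutator term by the $x$-regularity.

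Third, the key step, and the main obstacle, is the gain of $\abs{\xi}^{2/3}$. I would use Bouchut's averaging-lemma-type argument. Split $\hat u$ using a cutoff in $y$ at the scale $\lambda$ around the hyperplane $\{\tau+y\cdot\xi=0\}$, measured in the direction $\xi/\abs{\xi}$. Off the hyperplane, invert the transport symbol, which gives $\abs{\tau+y\cdot\xi}\gtrsim\lambda\abs{\xi}$. Near it, use that $\hat u$ varies in $y$ only through $\nabla_y\hat u$, so the contribution of that slab is bounded by $\lambda^2\norm{\nabla_y\hat u}$-type quantities, via the one-dimensional Poincaré/trace inequality across the slab. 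Optimizing $\lambda=\abs{\xi}^{-1/3}$ gives
\begin{equation}
	\abs{\xi}^{2/3}\norm{\hat u}_{L^2_y}\lesssim \norm{\hat f}_{L^2_y}+\norm{\laplacian_y\hat u}_{L^2_y},
\end{equation}
and similarly $\abs{\xi}^{1/3}\norm{\nabla_y\hat u}_{L^2_y}\lesssim$ the same right-hand side. Interpolation between $\abs{\xi}^{2/3}\hat u$ and $\laplacian_y\hat u$ then handles the commutator term above. Writing $\agb{\nabla_xu,\nabla_yu}=\agb{\abs{\nabla_x}^{2/3}u,\abs{\nabla_x}^{1/3}\nabla_yu}$ up to a Riesz factor and absorbing with Young's inequality closes the estimate for the first three terms. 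This requires some care: I would first prove the inequalities with the $\laplacian_y$ term on the right and absorb only at the end.

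Finally, for the fractional transport term, the same scaling argument applies with $\tau+y\cdot\xi$ itself. The symbol $(\abs{\tau+y\cdot\xi}^6+\abs{\xi}^4)^{1/12}$ is comparable to $\abs{\tau+y\cdot\xi}^{1/2}+\abs{\xi}^{1/3}$. The $\abs{\xi}^{1/3}$ part is already done. For the $\abs{\tau+y\cdot\xi}^{1/2}\nabla_y\hat u$ part, I would pair the differentiated equation $\iu(\tau+y\cdot\xi)\nabla_y\hat u+\iu\xi\hat u-\laplacian_y\nabla_y\hat u=\nabla_y\hat f$ with $\operatorname{sgn}(\tau+y\cdot\xi)\,\nabla_y\overline{\hat u}$. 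This bounds $\int\abs{\tau+y\cdot\xi}\abs{\nabla_y\hat u}^2$. The remaining terms are handled by moving $\nabla_y$ back onto $f$ and $u$ through integration by parts; these produce $\norm{f}\norm{\laplacian_yu}$ and $\abs{\xi}\,\abs{\hat u}\abs{\nabla_y\hat u}$, and the sign-function singularity costs only a boundary term controlled by the slab estimate of the third step. Summing in $(\xi,\tau)$ and using Plancherel gives the stated inequality.
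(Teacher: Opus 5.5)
\textbf{Gap in closing the first two estimates.} Your interpolation for $\abs{\nabla_x}^{1/3}\nabla_y u$ and your treatment of the transport term are in substance what the paper does. The paper, however, does not reprove the first two bounds; it quotes Bouchut's Theorem 1.5 for $\norm{\laplacian_y u}_2+\norm{\abs{\nabla_x}^{2/3}u}_2\lesssim\norm{f}_2$. Your self-contained replacement breaks at the final absorption. Write $D=\norm{\laplacian_y u}_2$, $X=\norm{\abs{\nabla_x}^{2/3}u}_2$, $F=\norm{f}_2$. Step 2 gives $D^2\le FD+X\norm{\abs{\nabla_x}^{1/3}\nabla_y u}_2$, and step 3 gives $X\le C(F+D)$ with a fixed constant $C$ that nothing in the slab argument makes small. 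If you then use the interpolation $\norm{\abs{\nabla_x}^{1/3}\nabla_y u}_2^2\le XD$, the commutator is bounded by $X^{3/2}D^{1/2}\le C^{3/2}(F+D)^{3/2}D^{1/2}$. This is of degree $2$ in $D$ with a non-small constant, so Young's inequality cannot absorb it, and the scheme is circular. In the paper the same interpolation is harmless because it is applied only after $D,X\lesssim F$ are known; in your scheme it is used to obtain them.

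The missing step is the energy identity from your step 1, used at each fixed $(\xi,\tau)$: $\norm{\nabla_y\hat u}_{L^2_y}^2=\operatorname{Re}\agb{\hat f,\hat u}_{L^2_y}$. Multiplying by $\abs{\xi}^{2/3}$ and integrating gives $\norm{\abs{\nabla_x}^{1/3}\nabla_y u}_2^2\lesssim FX$. The commutator is then $\lesssim X^{3/2}F^{1/2}\lesssim (F+D)^{3/2}F^{1/2}$, of degree $3/2<2$ in $D$, so $D\lesssim F$ and $X\lesssim F$ follow. Absorption also needs $D<\infty$ a priori, which is not a hypothesis, but a cutoff in $y$ at fixed $(\xi,\tau)$ supplies it.

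Equivalently, you can feed the same identity into the slab. The bound you actually need on the slab $S_\lambda$ is $\norm{\hat u}_{L^2(S_\lambda)}^2\le 4\lambda\norm{\hat u}_{L^2_y}\norm{\nabla_y\hat u}_{L^2_y}$. This is the trace inequality in the direction $\xi/\abs{\xi}$; a Poincar\'e inequality would leave the slab average uncontrolled. With $\lambda=\mu\abs{\xi}^{-1/3}$ it yields $\abs{\xi}^{2/3}\norm{\hat u}_{L^2_y}\lesssim\mu^{-1}(\norm{\hat f}_{L^2_y}+\norm{\laplacian_y\hat u}_{L^2_y})+\mu^{2}\norm{\hat f}_{L^2_y}$, so the coefficient of $D$ can be made as small as you like.

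\textbf{The sign function in the last step.} It is not disposed of the way you describe. The differentiated equation contains $\nabla_y\hat f$, which is only a distribution for $f\in L^2$. With the exact $\operatorname{sgn}(\tau+y\cdot\xi)$, moving $\nabla_y$ separately off $\nabla_y\hat f$ and off $\laplacian_y\nabla_y\hat u$ produces integrals over $\{\tau+y\cdot\xi=0\}$ of $\hat f\,\xi\cdot\nabla_y\overline{\hat u}$ and of $\laplacian_y\hat u\,\xi\cdot\nabla_y\overline{\hat u}$. These are traces of functions that are merely $L^2$, and no slab estimate for $\hat u$ controls them. What saves the computation is that they occur only in the combination $\hat f+\laplacian_y\hat u=\iu(\tau+y\cdot\xi)\hat u$, which vanishes on the hyperplane, so the boundary term is identically zero.

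Smoothing the sign over a slab of width $\abs{\xi}^{-1/3}$ also works. The cost is then $\abs{\xi}^{1/3}\norm{\nabla_y\hat u}(\norm{\hat f}+\norm{\laplacian_y\hat u})+\abs{\xi}^{2/3}\norm{\nabla_y\hat u}^2$, which is controlled by the $\abs{\nabla_x}^{1/3}\nabla_y u$ bound, not by the slab bound for $\hat u$.

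The paper's version is the clean form of this. Integrating by parts against the Lipschitz weight $\abs{\tau+y\cdot\xi}$ bounds $\int\abs{\tau+y\cdot\xi}\abs{\nabla_y\hat u}^2$ by $\abs{\agb{\hat u,\operatorname{sgn}(\tau+y\cdot\xi)\,\xi\cdot\nabla_y\hat u}}+\abs{\agb{\hat u,\abs{\tau+y\cdot\xi}\laplacian_y\hat u}}$. The last term is at most $\norm{(\tau+y\cdot\xi)\hat u}_2\norm{\laplacian_y\hat u}_2\lesssim\norm{f+\laplacian_y u}_2\norm{\laplacian_y u}_2$, and no boundary term ever appears.
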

	\begin{proof}
		See Theorem 1.5 in \cite{Bouchut2002} for the estimates 
		of $\laplacian_y{u}$ and $\abs{\nabla_x}^{2/3}u$.
		We obtain estimates of $\abs{\nabla_x}^{1/3}\nabla_yu$
		by interpolation, i.e., 
		\begin{align}
			\norm{\abs{\nabla_x}^{1/3}\nabla_yu}_2^2
			&\sim \norm{\abs{\xi}^{1/3}\eta\,\fourier_{x,y}{u}}_2^2
			= \sum_{i=1}^d
				\agb{\abs{\xi}^{1/3}\eta_i\,\fourier_{x,y}{u}, \abs{\xi}^{1/3}\eta_i\fourier_{x,y}{u}}\\
			&= \agb{\abs{\xi}^{2/3}\fourier_{x,y}{u}, \abs{\eta}^2\fourier_{x,y}{u}}
			\lesssim \norm{\abs{\nabla_x}^{2/3}u}_2\norm{\laplacian_y{u}}_2
			\lesssim \norm{f}_2^2.
		\end{align}
		Here, 
		$\agb{\dummydot, \dummydot}$ denotes 
		the inner product in $L^2(\bbR^{2d+1})$.

		Let $\hat{u}$ denote the Fourier transform of $u$ 
		with respect to $(x,t)$.
		It is sufficient to show the estimate 
		for $\abs{\partial_t+y\cdot\nabla_x}^{1/2}\nabla_yu$
		because of the triangle inequality in the frequency side.
		We have 
		\begin{align}
			\norm{\abs{\partial_t+y\cdot\nabla_x}^{1/2}\nabla_yu}_2^2
			&\sim \norm{\abs{\tau+y\cdot\xi}^{1/2}\nabla_yu}_2^2
			= \agb{\nabla_y\hat{u}, \abs{\tau+y\cdot\xi}\nabla_y\hat{u}}\\
			&\lesssim
				\sum_{i=1}^d 
				\abs{\agb{\hat{u},(\partial_{y_i}\abs{\tau+y\cdot\xi})\nabla_y\hat{u}}}
				+ \abs{\agb{\hat{u}, 
					\abs{\tau+y\cdot\xi}\laplacian_y\hat{u}}}.
				\label{al:L2 estimate integration by parts}
		\end{align}
		by integration by parts.
		Since 
		\begin{equation}
			\partial_{y_i}\abs{\tau+y\cdot\xi}=
			\begin{cases}
				\abs{\xi_i} &(\tau+y\cdot\xi>0)\\
				-\abs{\xi_i} &(\tau+y\cdot\xi<0)
			\end{cases},
		\end{equation}
		we have 
		\begin{equation}
			\abs{\agb{\hat{u},(\partial_{\xi_i}\abs{\tau+y\cdot\xi})\nabla_y\hat{u}}}
			\lesssim \norm{\abs{\xi_i}^{2/3}\hat{u}}_2
				\norm{\abs{\xi}^{1/3}\nabla_y\hat{u}}_2
			\lesssim \norm{\abs{\nabla_x}^{2/3}u}_2
				\norm{\abs{\nabla_x}^{1/3}\nabla_yu}_2
			\lesssim \norm{f}_2^2.
		\end{equation}

		We have to estimate the last term of 
		\eqref{al:L2 estimate integration by parts}.
		From the Cauchy-Schwartz inequality, 
		\begin{align}
			\abs{\agb{\hat{u}, 
					\abs{\tau+y\cdot\xi}\laplacian_y\hat{u}}}
			&=
			\abs*{\agb*{(\tau+y\cdot\xi)\hat{u},
				\frac{\tau+y\cdot\xi}{\abs{\tau+y\cdot\xi}}
				\laplacian_y{\hat{u}}}}
			\le \norm{(\tau+y\cdot\xi)\hat{u}}_2
				\norm{\laplacian_y\hat{u}}_2\\
			&\sim \norm{(\partial_t+y\cdot\nabla_x)u}_2
				\norm{\laplacian_y{u}}_2
			=\norm{\laplacian_y{u}+f}_2
				\norm{\laplacian_y{u}}_2
			\lesssim \norm{f}_2^2.
		\end{align}
	\end{proof}

	When $f$ is bounded and compactly supported, the conditions assumed in 
	Theorem \ref{thm:L2 bounded by Bouchut} 
	are verified as stated in the following proposition.
	\begin{proposition}\label{prop:Lp estimate for lower order term}
		If $f\in L^{\infty}(\bbR^d_x\times\bbR^d_y\times\bbR_t)$
		has compact support and
		$u$ is the solution defined by
		\eqref{eq:solution of L defined by fundamental solution},
		then $u,\nabla_y{u}\in L^p(\bbR^d_x\times\bbR^d_y\times\bbR_t)$ 
		for $p\in \lopen{1+1/2d,\infty}$.
		They especially belong to $L^2(\bbR^d_x\times\bbR^d_y\times\bbR_t)$ 
		in arbitrary dimensions.
	\end{proposition}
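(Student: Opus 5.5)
Since $u=Tf$ is a convolution on the group, I would treat it as a Young-type inequality for the homogeneous group, using the pointwise bounds of Proposition \ref{prop:pointwise estimte gamma} (in particular $\abs{\gamma(z)}\lesssim\norm{z}^{-4d}$ and $\abs{\nabla_y\gamma(z)}\lesssim\norm{z}^{-4d-1}$) together with the homogeneous dimension $Q=4d+2$ coming from $\abs{B(z,\delta)}=c\delta^{4d+2}$.

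Write $u(z)=\int f(z')\gamma((z')^{-1}\circ z)\dd{z'}$. Since $\Gamma(z,z')$ vanishes for $t\le t'$, only past times contribute. Then $\nabla_yu(z)=\int f(z')(\nabla_y\gamma)((z')^{-1}\circ z)\dd{z'}$, because the $y$-derivative of $(z')^{-1}\circ z=(x-x'-(t-t')y',y-y',t-t')$ falls only on the second slot. Set $K_0=\abs{\gamma}$ and $K_1=\abs{\nabla_y\gamma}$. The two bounds above give
\begin{equation}
	K_0\lesssim \min\crb{1,\norm{z}^{-4d}}\cdot(\text{local part } \norm{z}^{-4d}),\qquad K_1\lesssim\norm{z}^{-4d-1}.
\end{equation}
I split each kernel as $K\mathbf 1_{\norm{z}\le1}+K\mathbf 1_{\norm{z}>1}$. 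Integrating in polar-type coordinates with respect to the quasi-norm (the measure of $\{\norm z\sim 2^k\}$ is $\sim2^{k(4d+2)}$) gives two facts. First, $\norm{z}^{-a}\mathbf 1_{\norm z\le 1}\in L^1$ iff $a<4d+2$, which holds for $a=4d$ and $a=4d+1$. Second, $\norm{z}^{-a}\mathbf 1_{\norm z>1}\in L^r$ iff $ar>4d+2$. For $a=4d$ this means $r>1+1/(2d)$, and for $a=4d+1$ it means $r>(4d+2)/(4d+1)$, which is a weaker condition. The local parts are thus in $L^1$ and the tails are in $L^r$ for every $r\in\lopen{1+1/2d,\infty}$ (for $r=\infty$ the tails are simply bounded).

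Next I use Young's inequality on the group. Because the Lebesgue measure is left and right invariant and invariant under inversion (noted in Section \ref{sec:preliminaries}), the map $g\mapsto\int f(z')g((z')^{-1}\circ z)\dd{z'}$ satisfies $\norm{\,\cdot\,}_{p}\le\norm f_{q}\norm g_{r}$ with $1+1/p=1/q+1/r$. The change of variables $z''=(z')^{-1}\circ z$ has Jacobian $1$, since the map $z'\mapsto z$ is affine with unipotent linear part. For the local part I take $r=1$, $q=p$, using $f\in L^\infty$ with compact support, so $f\in L^p$ for all $p$. For the tail I take $q=1$, $r=p$, which requires $p>1+1/2d$. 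Summing the two pieces gives $u,\nabla_yu\in L^p$ for $p\in\lopen{1+1/2d,\infty}$. Since $1+1/(2d)\le 3/2<2$ for every $d\ge1$, the $L^2$ claim follows.

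The only delicate step I expect is justifying Young's inequality on this non-commutative group, namely measure preservation under $z'\mapsto (z')^{-1}\circ z$ for fixed $z$ and under $z\mapsto(z')^{-1}\circ z$ for fixed $z'$. Both are affine maps with determinant one, so Minkowski's integral inequality and Hölder's inequality carry over verbatim. A secondary point is checking that differentiation under the integral for $\nabla_yu$ is allowed. This holds because $\nabla_y\gamma$ is locally integrable by the previous step, so the derivative can be taken in the distributional sense and identified with the convolution.
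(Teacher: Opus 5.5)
Your argument is correct, but it splits differently from the paper. The paper splits the \emph{output} domain rather than the kernel. With $\supp f\subset B(0,L)$ and $R>c_1L$, on $B(0,R)$ it substitutes $z''=(z')^{-1}\circ z$. It observes that only $\norm{z''}\le c_0(c_0L+R)$ contributes, so the truncated kernel is in $L^1$ and Minkowski applies. This is exactly your local Young step, except that the truncation radius depends on the support instead of being $1$. On $B(0,R)^c$ the paper invokes Corollary \ref{cor:triangle ineq for qd} to get $\norm{(z')^{-1}\circ z}\ge c_2\norm{z}$ for $z'\in\supp f$. This amounts to the far-field bound $\abs{u(z)}\lesssim\norm{f}_{L^1}\norm{z}^{-\alpha}$, which lies in $L^p(B(0,R)^c)$ when $\alpha p>4d+2$.

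You replace this far-field comparison by left invariance of Lebesgue measure, applying Young with $q=1$, $r=p$ to the kernel tail. That removes the need for the quasi-triangle inequality. It also gives the support-independent bound $\norm{u}_p+\norm{\nabla_yu}_p\lesssim\norm{f}_p+\norm{f}_1$, valid for any $f\in L^1\cap L^p$. The paper's route uses compact support essentially, since its constants depend on $L$. In exchange it yields explicit decay of $u$ and $\nabla_yu$ away from the support.

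Two small slips. First, the display bounding $K_0$ by $\min\{1,\norm{z}^{-4d}\}$ is false near the origin: $\gamma(0,0,t)=(3/(4\pi^2))^{d/2}t^{-2d}$ is unbounded as $t\to0^+$. You only ever use $K_0\lesssim\norm{z}^{-4d}$, so nothing breaks. Second, for fixed $z$ the map $z'\mapsto(z')^{-1}\circ z$ is not affine, because it contains the term $t'y'$. Its Jacobian matrix is still block upper triangular with diagonal blocks $-I,-I,-1$, so it is measure preserving and your group Young inequality goes through.
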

	\begin{proof}
		When $\alpha=4d,4d+1$,
		\begin{equation}
			\frac{4d+2}{p}<\alpha< 4d+2
		\end{equation}
		holds for $p\in \lopen{1+1/2d,\infty}$.

		We use $c_1,c_2$ in Corollary \ref{cor:triangle ineq for qd}.
		Let $L>0$ be large enough
		so that $\supp{f}\subset B(0,L)$.
		Let $R> c_1 L$.
		We estimate the norm of $u$ by dividing 
		the domain of integration into 
		the ball $B((0,0,0),R)$ and its complement.
		For the former domain, changing the variable, we have
		\begin{align}
			\norm*{\int f(z')\frac{1}{\norm{(z')\inv\circ z}^{\alpha}}
				\dd{z'}}_{L^p(B(0,R))}
			&=\norm*{\int f(z\circ (z'')\inv)
				\frac{1}{\norm{z''}^{\alpha}}\dd{z''}}_{L^p(B(0,R))} \\
			& \le \int_{B(0,c_0(R+c_0L))}\frac{1}{\norm{z''}^{\alpha}}\dd{z''}
			\norm{f}_{L^p}<\infty
		\end{align}
		because $\alpha<4d+2$.
		Here we used the fact 
		if $z\circ (z'')\inv\in \supp{f}\subset B(0,L)$, then
		\begin{equation}
			\norm{z''}
			\le c_0(c_0\norm{z\circ (z'')\inv}+\norm{z})
			\le c_0(c_0L+R).
		\end{equation}
		Next, we consider the latter domain.
		If $\norm{z'}\le L$ and $\norm{z}>R$,
		then $c_1\norm{z'}\le\norm{z}$ and thus
		$c_2\norm{z}\le \norm{(z')\inv\circ z}$.
		Therefore,
		\begin{align}
			\norm*{\int f(z')\frac{1}{\norm{(z')\inv\circ z}^{\alpha}}
				\dd{z'}}_{L^p(B(0,R)^c)}
			&\le \int \abs{f(z')}
				\norm*{\frac{1}{\norm{(z')\inv\circ z}^{\alpha}}}
					_{L^p(B(0,R)^c)}
				\dd{z'}\\
			& \lesssim
			\norm*{\frac{1}{\norm{z}^{\alpha}}}_{L^p(B(0,R))^c}\norm{f}_{L^1}
			<\infty
		\end{align}
		holds from $\alpha>(4d+2)/p$.
	\end{proof}
	Consequently, we can apply Theorem \ref{thm:L2 bounded by Bouchut} 
	and obtain the $L^2$-boundedness for $T_1$ and $T_2$.

\subsection{\texorpdfstring{$L^{p}$}{Lp}-estimates}
	\label{subsec:proof of general Lp estimate}

	The quasi-distance $d$ we are considering satisfies
	the assumptions of Chapter 1 of \cite{Stein1993HarmonicAnalysis},
	and therefore, the following theorem holds 
	from Theorem 1 in Chapter 1 of \cite{Stein1993HarmonicAnalysis}.
	
	\begin{theorem}
		\label{thm:SI Lp}
		Let $p_0\in (1,\infty)$.
		Assume that a bounded linear operator
		$T\colon L^{p_0}(\bbR^{2d+1})\to L^{p_0}(\bbR^{2d+1})$
		has an integral kernel
		$K\colon\bbR^{2d+1}\times\bbR^{2d+1}\setminus \Delta\to \bbR$ 
		such that if
		$f\in L^{p_0}(\bbR^{2d+1})$ has compact support then
		\begin{equation}
			Tf(z)=\int_{\bbR^{2d+1}}K(z,z')f(z')\dd{\mu(z')},
			\qquad z\notin\supp{f}.
		\end{equation}
		If there is a constant $c>1$ 
		such that
		\begin{equation}\label{eq:Hormander condition for ball}
			\int_{\norm{(z')\inv\circ z}\ge c\norm{(z'')\inv\circ z'}}
			\abs{K(z,z')-K(z,z'')}\dd{\mu(z)}
		\end{equation}
		is bounded by some constant independent of $z'$ and $z''$,
		then $T$ is weak $(1,1)$ and strong $(p,p)$ when $1<p<p_0$.

		Furthermore, If in addition,
		\begin{equation}
			\int_{\norm{(z')\inv\circ z}\ge c\norm{(z'')\inv\circ z'}}
			\abs{K(z',z)-K(z'',z)}\dd{\mu(z)}
		\end{equation}
		is bounded by some constant independent of $z'$ and $z''$,
		then $T$ is strong $(p,p)$ for all $p\in (1,\infty)$.
	\end{theorem}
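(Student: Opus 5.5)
The statement is the Calder\'on--Zygmund theorem on the space of homogeneous type $(\bbR^{2d+1},d,\dd{z})$. The paper attributes it to Stein, and I would prove it by the classical route adapted to the quasi-distance $d$. The ingredients already available are the quasi-triangle and quasi-symmetry inequalities with constant $c_0$, the doubling property $\abs{B(z,\delta)}=c\delta^{4d+2}$, and the agreement of the $d$-topology with the Euclidean one. The last point guarantees that balls are measurable, that continuous compactly supported functions are dense, and that the Lebesgue differentiation theorem holds.

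The main step is the weak $(1,1)$ bound under the first H\"ormander condition \eqref{eq:Hormander condition for ball}. I would use a Vitali-type covering lemma for the balls $B(z,\delta)$, which needs only the quasi-triangle inequality and doubling, to obtain a Calder\'on--Zygmund decomposition at height $\alpha$. Given $f\in L^1\cap L^{p_0}$ with compact support, write $f=g+\sum_j b_j$. Here $\abs{g}\lesssim\alpha$ and $\norm{g}_1\le\norm{f}_1$. Each $b_j$ is supported in a ball $B_j=B(z_j,r_j)$, has mean zero, and satisfies $\norm{b_j}_1\lesssim\alpha\abs{B_j}$, with $\sum\abs{B_j}\lesssim\norm{f}_1/\alpha$. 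For the good part, Chebyshev and $L^{p_0}$-boundedness give $\abs{\{\abs{Tg}>\alpha\}}\lesssim\alpha^{-p_0}\norm{g}_{p_0}^{p_0}\lesssim\alpha^{-1}\norm{f}_1$. For the bad part, remove the enlarged balls $B_j^*=B(z_j,C r_j)$, whose total measure is controlled by doubling. Off $B_j^*$, the kernel representation and the mean-zero property give
\begin{equation}
Tb_j(z)=\int\prn{K(z,z')-K(z,z_j)}b_j(z')\dd{z'}.
\end{equation}
Choose $C$ depending on $c$ and $c_0$ so that $z\notin B_j^*$ and $z'\in B_j$ imply $\norm{(z')\inv\circ z}\ge c\norm{(z_j)\inv\circ z'}$; this uses quasi-symmetry to handle the order of the arguments in $d$. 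Then Fubini and the H\"ormander condition give $\int_{(B_j^*)^c}\abs{Tb_j}\lesssim\norm{b_j}_1$. Summing and applying Chebyshev finishes the weak $(1,1)$ bound. A small technical point is making sense of $T\sum b_j$: the sum converges in $L^{p_0}$ since $f$ is bounded after truncation, and one may assume $f$ is bounded with compact support by density.

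Marcinkiewicz interpolation between weak $(1,1)$ and strong $(p_0,p_0)$ then gives strong $(p,p)$ for $1<p<p_0$. For the second part, I would pass to the adjoint $T^*$. It is bounded on $L^{p_0'}$ and has kernel $K^*(z,z')=K(z',z)$ off the diagonal, which one checks by pairing against compactly supported functions with disjoint supports. The second hypothesis is exactly the first H\"ormander condition for $K^*$. Hence $T^*$ is bounded on $L^{r}$ for $1<r<p_0'$, and by duality $T$ is bounded on $L^p$ for $p_0<p<\infty$. Together with the first part and $p=p_0$, this gives all $p\in(1,\infty)$.

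The main obstacle I expect is the geometric step in the bad-part estimate: choosing $C$ so that the region of integration lies inside the set where the H\"ormander condition is assumed. This has to be done carefully because $d$ is only quasi-symmetric, so the constants $c_0$ from quasi-symmetry and from the quasi-triangle inequality both enter the choice of $C$.
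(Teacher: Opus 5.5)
Your proposal is correct and takes the same route as the paper. The paper does not prove this theorem itself: it notes that $d$ is a doubling quasi-distance and cites Theorem 1 of Chapter 1 of Stein. Stein's proof is exactly what you outline: a Calder\'on--Zygmund decomposition, the H\"ormander-condition bound off the enlarged balls, Marcinkiewicz interpolation, and duality through the transposed kernel $K(z',z)$.

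One minor remark: the geometric step needs only the quasi-triangle inequality. Indeed $d(z,z_j)\le c_0\prn{d(z,z')+d(z',z_j)}$ shows that $C=c_0(c+1)$ suffices, so quasi-symmetry enters only through the covering lemma.
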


	Because of the pointwise estimates of the derivatives of $\gamma_1$,
	it is easy to confirm the H\"{o}rmander condition
	\eqref{eq:Hormander condition for ball} for $T_1f=\laplacian_y Tf$
	as follows and so we can conclude the $L^p$-boundedness of $T_1$.

	\begin{proposition}\label{prop:difference estimate for Gamma1}
		We have
		\begin{align}
			\abs{\Gamma_1(z,z_1)-\Gamma_1(z,z_2)}
			& \lesssim
			\frac{\norm{z_2\inv\circ z_1}}{\norm{z_1\inv\circ z}^{4d+3}}, \\
			\abs{\Gamma_1(z_1,z)-\Gamma_1(z_2,z)}
			& \lesssim
			\frac{\norm{z_2\inv\circ z_1}}{\norm{z_1\inv\circ z}^{4d+3}}
		\end{align}
		if $c_1 \norm{z_2\inv\circ z_1}\le \norm{z_1\inv\circ z}$.
	\end{proposition}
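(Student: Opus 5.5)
The plan is to reduce both inequalities to a single mean-value estimate for $\gamma_1$ under a group translation by a small element. I will use the pointwise bounds of Proposition \ref{prop:pointwise estimte gamma} applied to $\gamma_1=\laplacian_y\gamma$. Since $\laplacian_y$ adds two to the homogeneity, these give, for $l+m+n=1$,
\begin{equation}
	\abs{\partial_{x_i}^l\partial_{y_j}^m\partial_t^n\gamma_1(z)}\lesssim\frac{1}{\norm{z}^{4d+2+3l+m+2n}}.
\end{equation}

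\textbf{First inequality.} Set $\zeta=z_1\inv\circ z$ and $w=z_2\inv\circ z_1$. Then $z_2\inv\circ z=w\circ\zeta$, so
\begin{equation}
	\Gamma_1(z,z_1)-\Gamma_1(z,z_2)=\gamma_1(\zeta)-\gamma_1(w\circ\zeta).
\end{equation}
By hypothesis $\norm{w}\le c_1\inv\norm{\zeta}$. Writing $\zeta=(x,y,t)$ and $w=(x_w,y_w,t_w)$, we have $w\circ\zeta=(x+x_w+ty_w,\,y+y_w,\,t+t_w)$. I will pass from $\zeta$ to $w\circ\zeta$ by three coordinate segments: first change $t$ by $t_w$, then $y$ by $y_w$, then $x$ by $x_w+ty_w$. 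On each segment I apply the mean value theorem together with the derivative bounds above.
\begin{itemize}
	\item The $t$-step costs $\abs{t_w}\,\norm{\cdot}^{-(4d+4)}$. Since $\abs{t_w}\le\norm{w}^2\le\norm{w}\norm{\zeta}$, this is at most $\norm{w}\norm{\zeta}^{-(4d+3)}$.
	\item The $y$-step costs $\abs{y_w}\,\norm{\cdot}^{-(4d+3)}\le\norm{w}\norm{\zeta}^{-(4d+3)}$.
	\item The $x$-step costs $\abs{x_w+ty_w}\,\norm{\cdot}^{-(4d+5)}$. Here $\abs{x_w}+\abs{t}\abs{y_w}\le\norm{w}^3+\norm{\zeta}^2\norm{w}\lesssim\norm{\zeta}^2\norm{w}$, which again gives $\norm{w}\norm{\zeta}^{-(4d+3)}$.
\end{itemize}

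\textbf{Main obstacle.} The delicate point is that these bounds require every intermediate point $\tilde\zeta$ on the path to satisfy $\norm{\tilde\zeta}\gtrsim\norm{\zeta}$. I will check this coordinatewise using the subadditivity of $s\mapsto s^{1/3}$ and $s\mapsto s^{1/2}$:
\begin{equation}
	\abs{x+s}^{1/3}\ge\abs{x}^{1/3}-\abs{s}^{1/3},\qquad \abs{s}^{1/3}\lesssim\norm{\zeta}^{2/3}\norm{w}^{1/3}\le c_1^{-1/3}\norm{\zeta},
\end{equation}
and similarly $\abs{t_w}^{1/2}\le\norm{w}$ and $\abs{y_w}\le\norm{w}$. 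Hence $\norm{\tilde\zeta}\ge\norm{\zeta}-Cc_1^{-1/3}\norm{\zeta}\ge\frac12\norm{\zeta}$, provided $c_1$ is large. Enlarging $c_1$ is harmless, because Corollary \ref{cor:triangle ineq for qd} remains valid for larger $c_1$. If the $c_1$ of that corollary is to be kept fixed, I would instead treat the range $c_1\norm{w}\le\norm{\zeta}\le C\norm{w}$ trivially. There each term is bounded by $\norm{\cdot}^{-(4d+2)}$, the arguments having norm $\gtrsim\norm{\zeta}$ by Corollary \ref{cor:triangle ineq for qd}, and $\norm{\zeta}^{-(4d+2)}\lesssim\norm{w}\norm{\zeta}^{-(4d+3)}$ in that range.

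\textbf{Second inequality.} Set $\zeta_1=z\inv\circ z_1$ and $w'=z_1\inv\circ z_2=w\inv$, so that $\norm{w'}\le c_0\norm{w}$. Then $z\inv\circ z_2=\zeta_1\circ w'$ and
\begin{equation}
	\Gamma_1(z_1,z)-\Gamma_1(z_2,z)=\gamma_1(\zeta_1)-\gamma_1(\zeta_1\circ w').
\end{equation}
By quasi-symmetry, $\norm{\zeta_1}\ge c_0\inv\norm{z_1\inv\circ z}$, so $\norm{w'}\lesssim c_1\inv\norm{\zeta_1}$. Now $\zeta_1\circ w'=(x_{w'}+x+t_{w'}y,\;y+y_{w'},\;t+t_{w'})$. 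The $x$-increment satisfies $\abs{x_{w'}}+\abs{t_{w'}}\abs{y}\le\norm{w'}^3+\norm{w'}^2\norm{\zeta_1}\lesssim\norm{\zeta_1}^2\norm{w'}$. The same three-step mean value argument, with the same lower bound on intermediate points, then gives $\norm{w}\,\norm{\zeta_1}^{-(4d+3)}\lesssim\norm{z_2\inv\circ z_1}/\norm{z_1\inv\circ z}^{4d+3}$.
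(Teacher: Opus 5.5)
Your proof is correct and is essentially the paper's argument: the same change of variables $\zeta=z_1\inv\circ z$, $w=z_2\inv\circ z_1$, the mean value theorem along a path from $\zeta$ to $w\circ\zeta$, and the homogeneous derivative bounds for $\gamma_1=\laplacian_y\gamma$. The differences are minor. The paper moves along the group translates $(\theta\xi',\theta\eta',\tau')\circ\zeta$ and $(0,0,\theta\tau')\circ\zeta$, so Corollary \ref{cor:triangle ineq for qd} bounds the intermediate points from below directly, whereas your coordinate segments need your subadditivity check and hence a large $c_1$; you also treat the right-translation inequality and the cross term $t y_w$ explicitly, both of which the paper's proof leaves implicit.
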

	\begin{proof}
		Change the variables as
		$\zeta=z_1\inv\circ z,\zeta'=z_2\inv\circ z_1$.
		We should prove that
		\begin{align}\label{al:difference estimate for gamma1}
			\abs{\gamma_1(\zeta)
				-\gamma_1(\zeta'\circ \zeta)}
			\lesssim
			\frac{\norm{{\zeta'}}}{\norm{\zeta}^{4d+3}}
		\end{align}
		when $c_1\norm{\zeta'}\le \norm{\zeta}$.
		Setting $\zeta'=(\xi',\eta',\tau')$
		and applying the mean value theorem,
		we can find $\theta_1,\theta_2\in (0,1)$ such that
		\begin{align}\label{al:stretched gamma1}
			\MoveEqLeft
			\gamma_1(\zeta'\circ \zeta)
			-\gamma_1(\zeta)\\
			& =\gamma_1((\xi',\eta',\tau')\circ(\xi,\eta,\tau))
			-\gamma_1((0,0,\tau')\circ(\xi,\eta,\tau))\\
			&\qquad +\gamma_1((0,0,\tau')\circ(\xi,\eta,\tau))
			-\gamma_1(\xi,\eta,\tau)\\
			& =(\xi'\cdot\nabla_x+\eta'\cdot\nabla_y)
			\gamma_1((\theta_1\xi',\theta_1\eta',\tau')\circ \zeta)
			+\tau'(\eta \cdot\nabla_x+\partial_t)
			\gamma_1((0,0,\theta_2\tau')\circ\zeta).
		\end{align}
		Since
		\begin{equation}
			\norm{(\theta_1\xi',\theta_1\eta',\tau')}
			\le \norm{\zeta'}\le c_1\inv \norm{\zeta},\qquad
			\norm{(0,0,\theta_2\tau')}
			\le \norm{\zeta'}
			\le c_1\inv \norm{\zeta}
		\end{equation}
		we get
		\begin{align}
			\norm{(\theta_1\xi',\theta_1\eta',\tau')\circ \zeta}
			& \ge c_2\norm{\zeta}, \\
			\norm{(0,0,\theta_2\tau')\circ\zeta}
			& \ge c_2\norm{\zeta}
		\end{align}
		by the definition of $c_1, c_2$.
		Therefore,
		\begin{align}
			\abs{\xi'\cdot \nabla_x\gamma_1
				((\theta_1\xi',\theta_2\eta',\tau')\circ \zeta)}
			& \lesssim \frac{\abs{\xi'}}{\norm{
			((\theta_1\xi',\theta_1\eta',\tau')\circ \zeta)}^{4d+5}}\\
			& \lesssim \frac{\norm{\zeta'}^3}{\norm{\zeta}^{4d+5}}
			\lesssim
			\frac{\norm{\zeta'}}{\norm{\zeta}^{4d+3}}
		\end{align}
		and the other terms of the right-hand side of
		\eqref{al:stretched gamma1} can be estimated similarly.
	\end{proof}

	It is more delicate to check the H\"{o}rmander condition
	for $T_if\ (i=2,3,4)$ than $T_1$
	because $\gamma_i\ (i=2,3,4)$ do not enjoy
	the same derivative estimates that $\gamma_1$ does.

	\begin{proposition}\label{prop: Hormander condition 2, 3}
		The inequality \eqref{eq:Hormander condition for ball}
		holds for $K=\Gamma_2, \Gamma_3$ and its adjoint as $c=c_1$.
	\end{proposition}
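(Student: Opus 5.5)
The plan is to follow the scheme of Proposition~\ref{prop:difference estimate for Gamma1}, but to keep track of the two different scales in the kernel bounds of Proposition~\ref{prop:pointwise estimate gamma2,3}. Set $\zeta=z_1\inv\circ z$ and $\zeta'=z_2\inv\circ z_1$, and write $r=c_1\inv\norm{\zeta}$-type thresholds via $r:=\norm{\zeta'}$. Since Lebesgue measure is invariant under left translation, the claim reduces to
\begin{equation}
	\int_{\norm{\zeta}\ge c_1 r}\abs{\gamma_i(\zeta'\circ\zeta)-\gamma_i(\zeta)}\dd{\zeta}\lesssim 1,\qquad i=2,3.
\end{equation}
For $\zeta=(\xi,\eta,\tau)$, put $\rho(\zeta)=\abs{\eta}+\abs{\tau}^{1/2}$. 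By Proposition~\ref{prop:pointwise estimate gamma2,3}, with $\theta=2/3$ for $\gamma_2$ and $\theta=1/3$ for $\gamma_3$, the kernel obeys
\begin{equation}
	\abs{\gamma_i}\lesssim \norm{\zeta}^{-(3d+3\theta)}\rho^{-d-(1-3\theta)\cdot 0}\cdots,
\end{equation}
that is, $\abs{\gamma_i}\lesssim\norm{\zeta}^{-(3d+3\theta)}\rho^{-(d+c)}$ with $3\theta+c=2$. Each $x$-derivative gains $\norm{\zeta}^{-3}$, each $y$-derivative gains $\rho^{-1}$, and a $t$-derivative gains $\rho^{-2}$. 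I will split the region $\{\norm{\zeta}\ge c_1 r\}$ into a good part $\{\rho(\zeta)\ge Cr\}$ and a bad part $\{\rho(\zeta)<Cr\}$, where $C$ is a large constant.

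\emph{Good part.} I use the same mean value decomposition \eqref{al:stretched gamma1}. This produces four terms:
\begin{itemize}
\item $\xi'\cdot\nabla_x\gamma_i$, of size $\lesssim r^3\norm{\cdot}^{-(3d+3\theta+3)}\rho^{-(d+c)}$;
\item $\eta'\cdot\nabla_y\gamma_i$, of size $\lesssim r\,\norm{\cdot}^{-(3d+3\theta)}\rho^{-(d+c+1)}$;
\item $\tau'\partial_t\gamma_i$, of size $\lesssim r^2\norm{\cdot}^{-(3d+3\theta)}\rho^{-(d+c+2)}$;
\item $\tau'\eta\cdot\nabla_x\gamma_i$, of size $\lesssim r^2\rho\,\norm{\cdot}^{-(3d+3\theta+3)}\rho^{-(d+c)}$.
\end{itemize}
Each is evaluated at an intermediate point. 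Corollary~\ref{cor:triangle ineq for qd} makes its norm comparable to $\norm{\zeta}$. Because the intermediate point differs from $\zeta$ in $y$ by at most $\abs{\eta'}\le r$ and in $t$ by at most $\abs{\tau'}\le r^2$, and $\rho(\zeta)\ge Cr$, its $\rho$ is also comparable to $\rho(\zeta)$; this is exactly why $C$ must be large. Integration then proceeds in two steps. First integrate in $\xi$, using $\norm{\zeta}\gtrsim\abs{\xi}^{1/3}+\rho$, which gives
\begin{equation}
	\int_{\bbR^d}(\abs{\xi}^{1/3}+\rho)^{-(3d+s)}\dd{\xi}\sim\rho^{-s}.
\end{equation}
Then integrate $\rho^{-(d+2+k)}$ over $\{\rho\ge Cr\}\subset\bbR^{d+1}_{\eta,\tau}$, a set with homogeneous dimension $d+2$, which gives $r^{-k}$. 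In each of the four terms the power $k$ matches the prefactor $r^k$ (namely $3,1,2,2$), so every term is $O(1)$.

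\emph{Bad part.} Here no cancellation is used, and the two terms are bounded separately. On this set $\norm{\zeta}\ge c_1r$ while $\rho<Cr$, which forces $\abs{\xi}\gtrsim r^3$. This gives
\begin{equation}
	\int\abs{\gamma_i(\zeta)}\lesssim\int_{\abs{\xi}\gtrsim r^3}\abs{\xi}^{-d-\theta}\dd{\xi}\int_{\rho<Cr}\rho^{-(d+c)}\dd{\eta}\dd{\tau}\sim r^{-3\theta}\,r^{2-c}=1.
\end{equation}
The singularity $\rho^{-(d+c)}$, with $c\le 1$, is integrable in homogeneous dimension $d+2$. For $\gamma_i(\zeta'\circ\zeta)$, substitute $w=\zeta'\circ\zeta$, which preserves measure. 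The image region satisfies $\norm{w}\gtrsim r$ by Corollary~\ref{cor:triangle ineq for qd} and $\rho(w)\le\rho(\zeta)+\abs{\eta'}+\abs{\tau'}^{1/2}\lesssim r$, so the same computation applies.

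\emph{Adjoint.} For the adjoint kernel, $\Gamma_i(z_1,z)-\Gamma_i(z_2,z)=\gamma_i(\zeta)-\gamma_i(\zeta\circ\zeta'')$ with $\zeta=z\inv\circ z_1$ and $\zeta''=z_1\inv\circ z_2$. The map $z\mapsto\zeta$ is measure preserving, by inversion and translation invariance. For right translation one has
\begin{equation}
	\zeta\circ\zeta''=(\xi+\xi''+\tau''\eta,\ \eta+\eta'',\ \tau+\tau'').
\end{equation}
The increment in $x$ is therefore $\xi''+\tau''\eta$. Its piece $\abs{\xi''}$ is handled as before. Its piece $\abs{\tau''\eta}\lesssim r^2\rho$ has the same form as the fourth good-part term above and is harmless on the good part. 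The bad part is treated identically to the direct kernel.

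\emph{Main obstacle.} The delicate step is guaranteeing that $\rho$ at the intermediate points stays comparable to $\rho(\zeta)$, that is, choosing the split constant $C$ correctly. Because the kernel blows up like $\rho^{-(d+c)}$ near $\{y=0,t=0\}$ rather than only at the origin, this region is where the argument for $\Gamma_1$ breaks down and must be replaced by the bad-part estimate above.
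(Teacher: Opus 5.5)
Your proof is correct in substance, but it is organised differently from the paper's. The paper treats only $\gamma_3$ and splits according to whether $\abs{x}^{1/3}$ or $\abs{y}+\abs{t}^{1/2}$ carries half of $\norm{z}$. It disposes of the second region exactly as for $\Gamma_1$. On the first region it telescopes the difference into separate $x$-, $y$- and $t$-increments. The $x$-increment goes by the mean value theorem. The other two are split into $\abs{y}\gtrless 2\abs{y'}$ and $\abs{t}\gtrless 2\abs{t'}$, with the interpolation $\abs{a-b}\le(\abs{a}+\abs{b})^{1-\sigma}\abs{a-b}^{\sigma}$ on the far side and crude size bounds on the near side.

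You instead make a single split at the scale $\rho(\zeta)\sim\norm{\zeta'}$. Away from the singular set $\{\eta=0,\ \tau=0\}$ the plain mean value bound suffices once you integrate in $\xi$ first. Near it you only need local integrability of $\rho^{-(d+c)}$ in homogeneous dimension $d+2$ together with the balance $3\theta+c=2$. This handles $\gamma_2$ and $\gamma_3$ at once, needs no interpolation exponent, and makes the scaling mechanism explicit. The paper's coordinatewise telescoping is heavier, but it is the template reused for $\gamma_4$, whose singular factor $\abs{y}^{-(d-1+\nu)}$ involves $y$ alone.

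Two points need repair.

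First, in the bad part you assert that $\norm{\zeta}\ge c_1r$ and $\rho<Cr$ force $\abs{\xi}\gtrsim r^3$. This needs $C<c_1$, which contradicts your insistence that $C$ be large, since $c_1$ is fixed by the statement. The analogous claim for $w=\zeta'\circ\zeta$ is worse: there you only know $\norm{w}\ge c_1c_2r$ and $\rho(w)<(C+1)r$, so it would need $c_1c_2>C+1$. Avoid the claim altogether. Use $\norm{\zeta}\ge\max\{c_1r,\abs{\xi}^{1/3}\}$, and correspondingly $\norm{w}\ge\max\{c_1c_2r,\abs{\xi_w}^{1/3}\}$ with $\xi_w$ the $x$-component of $w$. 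Then $\int_{\bbR^d}(r+\abs{\xi}^{1/3})^{-(3d+3\theta)}\dd{\xi}\sim r^{-3\theta}$ gives the bound. Note also that any $C>1$ works in the good part, since $\rho$ at the intermediate points is at least $\rho(\zeta)-(\abs{\eta'}+\abs{\tau'}^{1/2})\ge\rho(\zeta)-r$.

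Second, for the direct kernel $\zeta'\circ\zeta=(\xi+\xi'+\tau\eta',\eta+\eta',\tau+\tau')$. So the decomposition \eqref{al:stretched gamma1} you borrowed is really the right-translation formula. The $t$-step produces only $\tau'\partial_t\gamma_i$, while the first step contains the extra term $\tau\eta'\cdot\nabla_x\gamma_i$. This term is bounded by $r\rho^2\norm{\zeta}^{-(3d+3\theta+3)}\rho^{-(d+c)}$, i.e.\ by $r\rho^{-(d+3)}$ after the $\xi$-integration. It is therefore harmless, but it should appear in your list in place of $\tau'\eta\cdot\nabla_x\gamma_i$, which belongs to the adjoint case.
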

	\begin{proof}
		It is enough to consider $\gamma_3$
		because $\gamma_2$ enjoys a better estimate than $\gamma_3$.
		We decompose the domain of integration into two parts:
		$\abs{x}^{1/3}\ge \norm{z}/2$ and 
		$\abs{y}+\abs{t}^{1/2}\ge \norm{z}/2$.
		For the latter domain,
		there is nothing difficult
		because we can use the same estimate as
		\eqref{al:difference estimate for gamma1}
		in virtue of Proposition \ref{prop:pointwise estimate gamma2,3}.
		
		Let us consider the domain
		where $\abs{x}^{1/3}\ge \norm{z}/2$.
		By the triangle inequality, 
		\begin{align}
			\abs{\gamma_3(z'\circ z)-\gamma_3(z)}
			& \le \abs{\gamma_3((x',y',t')\circ z)
			-\gamma_3((0,y',t')\circ z)}\\
			& \quad +\abs{\gamma_3((0,y',t')\circ z)
			-\gamma_3((0,0,t')\circ z)} \\
			& \quad +\abs{\gamma_3((0,0,t')\circ z)-\gamma_3(z)}.
		\end{align}
		We remark that 
		$\abs{x+\theta ty'}\gtrsim \abs{x}$ holds,
		because $\abs{\theta ty'}\le \norm{z}^2\norm{z'}\le \norm{z}^3/c_1
		\le \abs{x}(2^3/c_1)$.
		Thus,
		\begin{align}\MoveEqLeft
			\abs{\gamma_3((x',y',t')\circ z)
				-\gamma_3((0,y',t')\circ z)}\\
			& =\int_0^1 \abs{x'}\abs{\nabla_x\gamma_3(\theta x',y',t')\circ z}
			\dd{\theta}\\
			& \lesssim
			\frac{\abs{x'}}{\norm{(x,y+y',t+t')}^{3d+1+3}
				(\abs{y+y'}+\abs{t+t'}^{1/2})^{d+1}}
		\end{align}
		and hence, 
		\begin{align}
			\MoveEqLeft
			\int_{\abs{x}^{1/3}\ge (c_1/2)\norm{z'}} 
				\abs{\gamma_3((x',y',t')\circ z)
				-\gamma_3((0,y',t')\circ z)}\dd{z} \\
			& \lesssim \int_{\abs{x}^{1/3}\ge (c_1/2)\norm{z'}} 
				\frac{\abs{x'}}{\norm{(x,y+y',t+t')}^{3d+4}
				(\abs{y+y'}+\abs{t+t'}^{1/2})^{d+1}}\dd{z} \\
			& =\int_{\abs{x}^{1/3}\ge (c_1/2)\norm{z'}}
				\frac{\abs{x'}}{\abs{x}^{d+1}}\dd{x}
				\int_{\bbR^{d+1}}
				\frac{1}{\prn{1+\abs{\eta}+\abs{\tau}^{1/2}}^{3d+4}
					(\abs{\eta}+\abs{\tau}^{1/2})^{d+1}}
				\dd{\eta}\dd{\tau}
				\qquad \prn*{
					\begin{aligned}
						y+y'&=\abs{x}^{1/3}\eta,\\ t+t'&=\abs{x}^{2/3}\tau
					\end{aligned}
				}
			\\
			& \lesssim 1.
		\end{align}
		Next, we consider the difference with respect to $y$.
		Since the bound for 
		$(t\nabla_x+\nabla_y)\gamma_3$ is controlled 
		by the worse decay of $\nabla_y\gamma_3$, we have
		\begin{align}\label{al: gamma3 diff y}
			\abs{\gamma_3((0,y',t')\circ z)-\gamma_3((0,0,t')\circ z)}
			&\lesssim 
			\int_0^1 \abs{y'}\abs{((t+t')\nabla_x+\nabla_y)\gamma_3
				((\theta x',y',t')\circ z)
			}\dd{\theta}\\
			&\lesssim 
				\int_0^1\frac{\abs{y'}}{\norm{(0,\theta y',t')\circ z}^{3d+1}
				(\abs{y+\theta y'}+\abs{t+t'}^{1/2})^{d+2}}\dd{\theta}.
		\end{align}
		Again, we decompose the domain of integration
		into two parts: $\abs{y}\ge 2\abs{y'}$ and $2\abs{y'}\ge \abs{y}$.
		Fix $\sigma\in (0,1)$.
		We integrate 
		\begin{equation}
			(\abs{\gamma_3((0,y',t')\circ z)}^{1-\sigma}
			+\abs{\gamma_3((0,0,t')\circ z)}^{1-\sigma})
			\abs{\gamma_3((0,y',t')\circ z)-\gamma_3((0,0,t')\circ z)}^{\sigma}
			.
		\end{equation}
		In the domain of $\abs{y}\ge 2\abs{y'}$,
		replacing $\abs{y+\theta y'}$ and $\abs{y+y'}$ with $\abs{y}$
		yields
		\begin{align}\MoveEqLeft
			\prn*{\frac{1}{\norm{(x,y,t+t')}^{3d+1}(\abs{y}+\abs{t+t'}^{1/2})^{d+1}}}^{1-\sigma}
			\prn*{\frac{\abs{y'}}{\norm{(x,y,t+t')}^{3d+1}
				(\abs{y}+\abs{t+t'}^{1/2})^{d+2}}}^{\sigma}\\
			&\lesssim \frac{\abs{y'}^{\sigma}}{
				\norm{(x,y,t+t')}^{3d+1}
				(\abs{y}+\abs{t+t'}^{1/2})^{d+1+\sigma}
			}.
		\end{align}
		Hence,
		\begin{align}\MoveEqLeft
			\int_{\abs{x}^{1/3}\ge \norm{z}/2\gtrsim \norm{z'}}
				\frac{\abs{y'}^{\sigma}}{
				\norm{(x,y,t+t')}^{3d+1}
				(\abs{y}+\abs{t+t'}^{1/2})^{d+1+\sigma}
			}\dd{z}\\
			&\lesssim 
			\int_{\abs{x}^{1/3}\gtrsim\norm{z'}}
				\frac{\abs{y'}^{\sigma}}{\abs{x}^{d+\sigma/3}}\dd{x}
				\int \frac{1}{(1+\abs{y}+\abs{t}^{1/2})^{3d+1}(\abs{y}+\abs{t}^{1/2})^{d+1+\sigma}}\dd{y}\dd{t}\\
			&\lesssim 1.
		\end{align}
		In the domain of $2\abs{y'}\ge \abs{y}$,
		we estimate the integral without the derivatives.
		Fix $0<\nu<1$.
		Using $3\abs{y'}\ge \abs{y+y'}$, which is derived from 
		$2\abs{y'}\ge \abs{y}$, we obtain
		\begin{align}\MoveEqLeft
			\int_{2\abs{y'}\ge \abs{y},
				\abs{x}^{1/3}\ge \norm{z}/2\gtrsim \norm{z'}}
				\abs{\gamma_3((0,y',t')\circ z)}\dd{z}\\
			&\lesssim 
			\int 
			\frac{1}{\norm{(x,y+y',t+t')}^{3d+1}(\abs{y+y'}+\abs{t+t'}^{1/2})^{d+1}}\dd{z}\\
			&\lesssim
			\int
			\frac{1}{\norm{(x,0,0)}^{3d+1-\nu}(\abs{y+y'}+\abs{t+t'}^{1/2})^{d+1+\nu}}\dd{z}\\
			&\lesssim 
			\int \frac{1}{(\abs{x}^{1/3})^{3d+1-\nu}\abs{y+y'}^{d-1+\nu}}
			\dd{x}\dd{y}\\
			&\lesssim\int \frac{\abs{y'}^{1-\nu}}{\abs{x}^{d+(1-\nu)/3}}\dd{x}
			\lesssim \frac{\abs{y'}^{1-\nu}}{\norm{z'}^{1-\nu}}
			\lesssim 1.
		\end{align}
		Similar arguments apply to the estimate of 
		$\gamma_3((0,0,t')\circ z)$.

		Finally, we estimate $\abs{\gamma_3((0,0,t')\circ t)-\gamma_3(z)}$.
		We decompose the domain of integration into
		two parts: $\abs{t}\ge2\abs{t'}$ and $2\abs{t'}\ge \abs{t}$.
		In estimation in the domain of $\abs{t}\ge 2\abs{t'}$, 
		we can replace 
		$\abs{t+t'}, \abs{t+\theta t'}$ by $\abs{t}$
		and thus we have
		\begin{align}
			\abs{\gamma_3((0,0,t')\circ t)-\gamma_3(z)}
			&\lesssim \int_0^1\abs{t'}\abs{\partial_t\gamma_3(x,y,t+\theta t')}\\
			&\lesssim \frac{\abs{t'}}{\norm{z}^{3d+1}(\abs{y}+\abs{t}^{1/2})^{d+3}}
		\end{align}
		and
		\begin{equation}
			\abs{\gamma_3((0,0,t')\circ t)}+\abs{\gamma_3(z)}
			\lesssim \frac{\abs{t'}}{\norm{z}^{3d+1}(\abs{y}+\abs{t}^{1/2})^{d+1}}.
		\end{equation}
		Hence,
		\begin{align}
			\abs{\gamma_3((0,0,t')\circ t)-\gamma_3(z)}
			&\lesssim 
			\prn*{\frac{\abs{t'}}{\norm{z}^{3d+1}(\abs{y}+\abs{t}^{1/2})^{d+1}}}^{1-\sigma}
			\prn*{\frac{\abs{t'}}{\norm{z}^{3d+1}(\abs{y}+\abs{t}^{1/2})^{d+3}}}^{\sigma}\\
			&=\frac{\abs{t'}^{\sigma}}{\norm{z}^{3d+1}(\abs{y}+\abs{t}^{1/2})^{d+1+2\sigma}}.
		\end{align}
		Fix $\sigma \in(0,1/2)$.
		Then,
		\begin{align}
			\int_{\abs{t}\ge 2\abs{t'},
				\abs{x}^{1/3}\ge \norm{z}/2\gtrsim\norm{z'}}
				\abs{\gamma_3((0,0,t')\circ t)-\gamma_3(z)}\dd{z}
			&\lesssim 
				\int \frac{\abs{t'}^{\sigma}}{\norm{z}^{3d+1}(\abs{y}+\abs{t}^{1/2})^{d+1+2\sigma}}\dd{z}\\
			&\lesssim \int \frac{\abs{t'}^{\sigma}}{(\abs{x}^{1/3}+\abs{t}^{1/2})^{3d+1}\abs{t}^{1/2+\sigma}}
				\dd{t}\dd{x}\\
			&\lesssim \int \frac{\abs{t'}^{1/2}}{\abs{x}^{d+1/3}}\dd{x}
			\lesssim \frac{\abs{t'}^{1/2}}{\norm{z'}}\lesssim 1.
		\end{align}
		In the domain of $2\abs{t'}\ge \abs{t}$, 
		we estimate the integrals of 
		$\abs{\gamma_3((0,0,t')\circ t)}$ and 
		$\abs{\gamma_3(z)}$ respectively.
		Thanks to the fact $\abs{t+t'}\le 3\abs{t'}$, 
		\begin{align}
			\int_{2\abs{t'}\ge \abs{t},
				\abs{x}^{1/3}\ge \norm{z}/2\gtrsim\norm{z'}}
			\abs{\gamma_3((0,0,t')\circ t)}\dd{z}
			&\lesssim \int\frac{1}{\norm{z}^{3d+1}(\abs{y}+\abs{t+t'}^{1/2})^{d+1}}
			\dd{z}\\
			&\lesssim 
			\int \frac{1}{\abs{x}^{d+1/3}\abs{t+t'}^{1/2}}\dd{t}\dd{x}\\
			&\lesssim \int \frac{\abs{t'}^{1/2}}{\abs{x}^{d+1/3}}\dd{x}
			\lesssim 1
		\end{align}
		holds.

		An analogous computation shows the inequality 
		\eqref{eq:Hormander condition for ball} holds 
		for the adjoints of $\Gamma_2$ and $\Gamma_3$.
	\end{proof}

	\begin{proposition}\label{prop: Hormander condition 4}
		The inequality \eqref{eq:Hormander condition for ball}
		holds for $K=\Gamma_4$ and its adjoint as $c=c_1$.
	\end{proposition}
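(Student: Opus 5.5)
We follow the scheme of Proposition~\ref{prop: Hormander condition 2, 3}, now feeding in the pointwise bounds of Proposition~\ref{prop:pointwise estimate gamma4}. As before, we pass to the variables $\zeta=z_1\inv\circ z$ and $\zeta'=z_2\inv\circ z_1$, using the invariance of Lebesgue measure under translations and inversion. The task is then to show
\begin{equation}
	\int_{\norm{z}\ge c_1\norm{z'}}\abs{\gamma_4(z'\circ z)-\gamma_4(z)}\dd{z}\lesssim 1
\end{equation}
uniformly in $z'$, together with the analogous bound for the adjoint kernel. We split the difference into three pieces, exactly as in the proof of Proposition~\ref{prop: Hormander condition 2, 3}:
\begin{itemize}
	\item an $x$-increment $\gamma_4((x',y',t')\circ z)-\gamma_4((0,y',t')\circ z)$;
	\item a $y$-increment $\gamma_4((0,y',t')\circ z)-\gamma_4((0,0,t')\circ z)$;
	\item a $t$-increment $\gamma_4((0,0,t')\circ z)-\gamma_4(z)$.
\end{itemize}
Each piece is handled by the mean value theorem, using the derivative bounds \eqref{eq:pointwise estimate gamma4} with a fixed small $\nu\in(0,1)$. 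Since $\norm{(\theta x',\theta y',\theta t')\circ z}\sim\norm{z}$ on the region $\norm{z}\ge c_1\norm{z'}$, the factor $\norm{\cdot}$ may be replaced by $\norm{z}$ throughout.

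\textbf{The $x$-increment.} The mean value theorem gives a bound by
\begin{equation}
	\frac{\abs{x'}}{\abs{y+y'}^{d-1+\nu}\norm{z}^{3d+6-\nu}}.
\end{equation}
The factor $\abs{y+y'}^{-(d-1+\nu)}$ is locally integrable in $y$ because $\nu<1$. We integrate over $\norm{z}\gtrsim\norm{z'}$ by scaling $y+y'=\rho\eta$ and $t+t'=\rho^2\tau$, where $\rho=\norm{z}$. The integral then reduces to
\begin{equation}
	\abs{x'}\int_{\rho\gtrsim\norm{z'}}\rho^{-4}\dd{\rho}\lesssim\frac{\abs{x'}}{\norm{z'}^3}\le 1.
\end{equation}

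\textbf{The $t$-increment.} Here we use the operator $\partial_t+(y+y')\cdot\nabla_x$, for which Proposition~\ref{prop:pointwise estimate gamma4} gives the bound $\abs{y}^{-(d-1+\nu)}\norm{z}^{-(3d+5-\nu)}$. The mean value theorem then yields
\begin{equation}
	\frac{\abs{t'}}{\abs{y}^{d-1+\nu}\norm{z}^{3d+5-\nu}},
\end{equation}
and the same scaling argument bounds its integral by $\abs{t'}/\norm{z'}^2\le1$. The composition law produces a cross term $t'y'\cdot\nabla_x$ inside $z'\circ z$. This term is already accounted for in the $x$-increment, since $\abs{t'y'}\lesssim\norm{z'}^3$.

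\textbf{The $y$-increment: main obstacle.} This is where the difficulty lies. We have $\nabla_y\gamma_4\lesssim\abs{y}^{-(d+\nu)}\norm{z}^{-(3d+3-\nu)}$, and the factor $\abs{y}^{-(d+\nu)}$ is \emph{not} locally integrable in $y$. We argue as for $\gamma_3$ and split into the regions $\abs{y}\ge2\abs{y'}$ and $\abs{y}<2\abs{y'}$.

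On $\abs{y}\ge2\abs{y'}$ we interpolate. We write the difference as
\begin{equation}
	\prn{\abs{\gamma_4(\cdot)}+\abs{\gamma_4(\cdot)}}^{1-\sigma}\abs{\text{difference}}^{\sigma}
\end{equation}
with $\sigma$ small, chosen so that $\nu+\sigma<1$. This gives the bound
\begin{equation}
	\frac{\abs{y'}^{\sigma}}{\abs{y}^{d-1+\nu+\sigma}\norm{z}^{3d+3-\nu}}.
\end{equation}
This is integrable near $y=0$, and after scaling its integral is $\lesssim\abs{y'}^\sigma\norm{z'}^{-\sigma}\le1$. The term $(t+t')\nabla_x\gamma_4$ that appears from the group law is harmless: it carries the stronger decay $\norm{z}^{-(3d+6-\nu)}$ in place of the extra $\abs{y}^{-1}$.

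On $\abs{y}<2\abs{y'}$ we estimate the two terms separately, using only the bound $\abs{\gamma_4}\lesssim\abs{y}^{-(d-1+\nu)}\norm{z}^{-(3d+3-\nu)}$. Integrating in $y$ over a ball of radius $\sim\abs{y'}$ gives a factor $\abs{y'}^{1-\nu}$. In the remaining variables $(x,t)$, the homogeneity of $\norm{(x,0,t)}^{-(3d+3-\nu)}$ on the region $\norm{(x,0,t)}\gtrsim\norm{z'}$ gives $\norm{z'}^{-(1-\nu)}$. The product $\abs{y'}^{1-\nu}\norm{z'}^{-(1-\nu)}$ is $\lesssim1$. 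The only care needed is to pick a single $\nu$ that is admissible in Proposition~\ref{prop:pointwise estimate gamma4} and makes every exponent in this bookkeeping work.

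\textbf{The adjoint condition.} Here one varies the first argument of the kernel, i.e.\ one considers $\gamma_4(\zeta\circ\zeta')-\gamma_4(\zeta)$. The increments now enter through the right-invariant vector fields, namely $\nabla_x$, $\nabla_y-t\nabla_x$ and $\partial_t$. These obey the same pointwise bounds, using $\abs{t}\lesssim\norm{z}^2$, so the same three-piece argument applies verbatim.
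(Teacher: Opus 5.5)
Your argument is correct and is essentially the paper's proof. Both telescope along group increments and apply the mean value theorem with the bounds of Proposition~\ref{prop:pointwise estimate gamma4}. For the $y$-increment, both split into $\abs{y}\ge 2\abs{y'}$, where the derivative bound is used, and $\abs{y}<2\abs{y'}$, where size bounds give $(\abs{y'}/\norm{z'})^{1-\nu}$.

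The only real difference is on $\abs{y}\ge 2\abs{y'}$. There you reuse the $\sigma$-interpolation from the $\gamma_3$ proof, which lets you work on all of $\norm{z}\ge c_1\norm{z'}$. The paper instead restricts to $\abs{x}^{1/3}+\abs{t}^{1/2}\ge\norm{z}/2$ and integrates $\abs{y}^{-d-\nu}$ directly over $\abs{y}\ge 2\abs{y'}$.

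There are two slips, neither of which affects the estimates:
\begin{itemize}
\item The cross term in $z'\circ z$ is $ty'$, not $t'y'$, and your $y$-increment already carries it through its $\nabla_x$ term. So your $t$-increment is just a pure $\partial_t$ difference.
\item In the adjoint case, $\gamma_4(\zeta\circ w)-\gamma_4(\zeta)$ is a right translation. It is generated by the left-invariant fields $\nabla_x$, $\nabla_y$, $\partial_t+y\cdot\nabla_x$, not by $\nabla_y-t\nabla_x$ and $\partial_t$. Proposition~\ref{prop:pointwise estimate gamma4} and its proof bound these fields in exactly the same way, so the argument goes through unchanged.
\end{itemize}
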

	\begin{proof}
		We estimate 
		the integral of $\abs{\gamma_4(z'\circ z)-\gamma_4(z)}$
		over the domain of $\norm{z'}\le c_1\norm{z}$ 
		with Proposition \ref{prop:pointwise estimate gamma4}.
		We focus on the domain of $\abs{x}^{1/3}+\abs{t}^{1/2}\ge \norm{z}/2$.
		Fix $\nu\in (0,1)$ small enough.
		We have 
		\begin{align}
			\abs*{\gamma_4((x',0,t')\circ z)-\gamma_4((0,0,t')\circ z)}
			&\lesssim \frac{\abs{x'}}{\norm{z}^{3d+6-\nu}\abs{y}^{d-1+\nu}},\\
			\abs*{\gamma_4((0,0,t')\circ z)-\gamma_4(z)}
			&\lesssim \frac{\abs{t'}}{\norm{z}^{3d+5-\nu}\abs{y}^{d-1+\nu}}.
		\end{align}
		Integrate them with respect to $y$ first. Then we see 
		\begin{align}
			\int_{\abs{x}^{1/3}+\abs{t}^{1/2}\ge \norm{z}/2\gtrsim \norm{z'}}
			\abs*{\gamma_4((x',0,t')\circ z)-\gamma_4((0,0,t')\circ z)}\dd{z}
			&\lesssim 1,\\
			\int_{\abs{x}^{1/3}+\abs{t}^{1/2}\ge \norm{z}/2\ge \norm{z'}/2c_1}
			\abs*{\gamma_4((0,0,t')\circ z)-\gamma_4((0,0,t')\circ z)}\dd{z}
			&\lesssim 1.
		\end{align}

		Let us estimate $\abs*{\gamma_4(z'\circ z)-\gamma_4((x',0,t')\circ z)}$.
		The idea of computation is shared with 
		Proposition \ref{prop: Hormander condition 2, 3}:
		we decompose the domain into 
		the part of $\abs{y}\ge 2\abs{y'}$ and the other.
		Suppose that $\abs{y}\ge 2\abs{y'}$.
		Then 
		$\abs{y+\theta y'}\ge \abs{y}-\abs{y'}\ge \abs{y}/2$
		for $\theta\in[0,1]$.
		Hence,
		\begin{align}
			\abs*{\gamma_4(z'\circ z)-\gamma_4((x',0,t')\circ z)}
			&\lesssim 
			\int_0^1 
			\frac{\abs{y'}}{\norm{z}^{3d+3-\nu}\abs{y+\theta y'}^{d+\nu}}\dd{\theta}\\
			&\lesssim 
			\frac{\abs{y'}}{\norm{z}^{3d+3-\nu}\abs{y}^{d+\nu}},
		\end{align}
		which is integrable with respect to $y$,
		and therefore
		\begin{align}\MoveEqLeft
			\int_{\abs{y}\ge 2\abs{y'},\abs{x}^{1/3}+\abs{t}^{1/2}\ge \norm{z}\gtrsim \norm{z'}}
			\abs*{\gamma_4(z'\circ z)-\gamma_4((x',0,t')\circ z)}\dd{z}\\
			&\le \int \frac{\abs{y'}}{\norm{(x,0,t)}^{3d+3-\nu}\abs{y}^{d+\nu}}\dd{z}\\
			&\lesssim \int \frac{\abs{y'}^{1-\nu}} 
				{(\abs{x}^{1/3}+\abs{t}^{1/2})^{3d+3-\nu}}\dd{x}\dd{t}\\
			&\lesssim \frac{\abs{y'}^{1-\nu}}{\norm{z'}^{1-\nu}}
			\lesssim 1.
		\end{align}
		Suppose that $2\abs{y'}\ge \abs{y}$,
		By the triangle inequality,
		\begin{align}
			\abs*{\gamma_4(z'\circ z)-\gamma_4((x',0,t')\circ z)}
			&\lesssim 
				\frac{1}{\norm{(x,0,t)}^{3d+3-\nu}\abs{y+y'}^{d-1+\nu}}
					+\frac{1}{\norm{(x,0,t)}^{3d+3-\nu}\abs{y}^{d-1+\nu}}.
		\end{align}
		From $\abs{y+y'}\le \abs{y}+\abs{y'}\le 3\abs{y'}$,
		we have 
		\begin{align}\MoveEqLeft
			\int_{2\abs{y'}\ge \abs{y},
			\abs{x}^{1/3}+\abs{t}^{1/2}\ge \norm{z}\gtrsim \norm{z'}}
			\abs*{\gamma_4(z'\circ z)-\gamma_4((x',0,t')\circ z)}
			\dd{z}\\
			&\lesssim 
				\int_{\abs{x}^{1/3}+\abs{t}^{1/2}\gtrsim \norm{z'}}
				\frac{\abs{y'}^{1-\nu}}{(\abs{x}^{1/3}+\abs{t}^{1/2})^{3d+3-\nu}}
				\dd{x}\dd{t}\\
			&\lesssim 
				\frac{\abs{y'}^{1-\nu}}{\norm{z'}^{1-\nu}}
			\lesssim 1.
		\end{align}
		An analogous computation shows the inequality 
		\eqref{eq:Hormander condition for ball} holds 
		for the adjoint of $\Gamma_4$.
	\end{proof}
\subsection{Anisotropic estimates}
	\label{sec:proof of LyLxy estimate}

	To prove anisotropic estimates, we need the theory of
	singular integrals for vector-valued functions.
	The following theorem is Theorem 5.17 in \cite{duo2001fourier},
	whose proof is sketched there.
	Let $\bddlin(A,B)$ denote the set of bounded linear operators 
	from $A$ into $B$ for Banach spaces $A$ and $B$.
	\begin{theorem}
		\label{thm:SI vector-valued}
		Let $A, B$ be separable Banach spaces and $p_0\in (1,\infty)$.
		Assume that a bounded linear operator
		$T\colon L^{p_0}(\bbR^d; A)\to L^{p_0}(\bbR^d; B)$
		has an integral kernel $K$ 
		that is defined on $\bbR^d\times\bbR^d\setminus\Delta$
		and takes values in $\bddlin(L^{p_0}(\bbR^d; A),L^{p_0}(\bbR^d; B))$ 
		such that if $f\in L^{p_0}(\bbR^d; A)$ has compact support, then
		\begin{equation}
			Tf(x)=\int_{\bbR^d}K(x,y)f(y)\dd{\mu(y)},
			\qquad x\notin\supp{f}
		\end{equation}
		in the mean of Bochner integral.
		If there is a constant $c>1$ such that
		\begin{equation}
			\int_{\abs{x-y}\ge c \abs{y'-y}}
			\norm{K(x,y)-K(x,y')}_{\bddlin(A,B)}\dd{\mu(x)}
		\end{equation}
		is bounded by some constant independent of $y$ and $y'$,
		then $T$ is weak $(1,1)$ and strong $(p,p)$ when $1<p<p_0$.

		Furthermore, If in addition,
		\begin{equation}
			\int_{\abs{x-y}\ge c \abs{y'-y}}
			\norm{K(y,x)-K(y',x)}_{\bddlin(A,B)}\dd{\mu(x)},
		\end{equation}
		is bounded by some constant independent of $y$ and $y'$,
		then $T$ is strong $(p,p)$ for all $p\in (1,\infty)$.
	\end{theorem}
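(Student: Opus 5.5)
The plan is to follow the scalar Calder\'on--Zygmund argument, with $\abs{\dummydot}$ replaced by $\norm{\dummydot}_A$ on the input side and $\norm{\dummydot}_B$ on the output side. The only structural facts needed are these.
\begin{itemize}
\item Bochner integrals of compactly supported $A$-valued functions behave linearly: mean values, and the integral of $K(x,y)$ against such functions.
\item $x\mapsto\norm{Tf(x)}_B$ is sublinear in $f$, so Marcinkiewicz interpolation applies to it.
\end{itemize}

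\textbf{Step 1: weak $(1,1)$.} Take $f\in L^1\cap L^{p_0}(\bbR^d;A)$ with compact support and fix $\lambda>0$. Perform the Calder\'on--Zygmund decomposition of the scalar function $\norm{f}_A$ at height $\lambda$. This gives disjoint dyadic cubes $Q_j$ with $\sum\abs{Q_j}\le\lambda\inv\norm{f}_{L^1(A)}$ and $\norm{f}_A\le C\lambda$ off $\bigcup Q_j$.

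Set $g=f$ off $\bigcup Q_j$ and $g=\frac{1}{\abs{Q_j}}\int_{Q_j}f$ (a Bochner mean, with $\norm{\dummydot}_A\le 2^d\lambda$) on $Q_j$. Put $b=\sum_j b_j$ with $b_j=(f-g)\mathbf 1_{Q_j}$, so each $b_j$ has vanishing $A$-valued mean.

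For the good part, $\norm{g}_{L^{p_0}(A)}^{p_0}\lesssim\lambda^{p_0-1}\norm{f}_{L^1(A)}$. Boundedness of $T$ on $L^{p_0}$ and Chebyshev then give the desired bound for $\abs{\{\norm{Tg}_B>\lambda/2\}}$.

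For the bad part, remove $\bigcup Q_j^*$, where $Q_j^*$ is the dilate of $Q_j$ by a factor depending on $c$. For $x\notin Q_j^*$, with $y_j$ the centre of $Q_j$, the cancellation of $b_j$ gives
\begin{equation}
Tb_j(x)=\int_{Q_j}\prn{K(x,y)-K(x,y_j)}b_j(y)\dd{\mu(y)}.
\end{equation}
Hence
\begin{equation}
\int_{(Q_j^*)^c}\norm{Tb_j}_B
\le\int_{Q_j}\norm{b_j(y)}_A\int_{\abs{x-y_j}\ge c\abs{y-y_j}}\norm{K(x,y)-K(x,y_j)}_{\bddlin(A,B)}\dd{\mu(x)}\dd{\mu(y)}
\lesssim\norm{b_j}_{L^1(A)}.
\end{equation}
Summing over $j$ gives the weak $(1,1)$ bound. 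One technical point: $Tb=\sum_j Tb_j$ must hold a.e. off $\bigcup Q_j^*$. This follows from $L^{p_0}$-convergence of the partial sums of $b$ (each $b_j\in L^{p_0}$ and the sum converges in $L^{p_0}$ on bounded sets), together with passing to a subsequence. Marcinkiewicz interpolation between weak $(1,1)$ and strong $(p_0,p_0)$ applied to $\norm{Tf}_B$, followed by density, gives strong $(p,p)$ for $1<p<p_0$.

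\textbf{Step 2: $p>p_0$ under the second condition.} The classical route is duality: $T^*$ maps $L^{p_0'}(B^*)$ to $L^{p_0'}(A^*)$ with kernel $K(y,x)^*$, and the adjoint condition is precisely the first condition for this kernel. The catch is that $B^*$ need not be separable and $L^{p}(B)^*$ need not equal $L^{p'}(B^*)$. I expect this to be the main obstacle.

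To avoid it, I would instead prove the sharp-function bound
\begin{equation}
M^{\#}\prn{\norm{Tf}_B}(x)\lesssim \prn{M(\norm{f}_A^{r})(x)}^{1/r},\qquad r=p_0.
\end{equation}
Fix a cube $Q\ni x$ with centre $x_Q$, and split $f=f\mathbf 1_{Q^*}+f\mathbf 1_{(Q^*)^c}$.
\begin{itemize}
\item The local piece is controlled by the $L^{p_0}$-boundedness of $T$ and H\"older's inequality.
\item For the far piece, compare $T(f\mathbf 1_{(Q^*)^c})(z)$ with the constant vector $T(f\mathbf 1_{(Q^*)^c})(x_Q)$ for $z\in Q$. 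The difference is $\int_{(Q^*)^c}(K(z,w)-K(x_Q,w))f(w)\dd{\mu(w)}$. Decompose $(Q^*)^c$ into dyadic annuli. On each annulus use H\"older's inequality together with the second condition, applied in its $L^1$ form on the annulus, which is exactly what the hypothesis provides after averaging.
\end{itemize}
The difficulty is that the second condition is an $L^1$ (H\"ormander) condition, whereas the annulus estimate naively wants $L^{p_0'}$ control of the kernel difference. If this step does not close, I would fall back to duality after all. Since $\norm{Tf}_{L^p(B)}=\sup\int\agb{Tf,g}$ over simple $B^*$-valued $g$ with $\norm{g}_{L^{p'}(B^*)}\le1$, and $\agb{Tf,g}=\agb{f,T^*g}$ only needs to be tested on simple $g$, the failure of $L^p(B)^*=L^{p'}(B^*)$ should not matter. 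Concretely, I would apply Step 1 to $T^*$ restricted to simple functions (separability is used only to make the Bochner integrals measurable) to get $T^*\colon L^{p'}(B^*)\to L^{p'}(A^*)$ for $p'<p_0'$, and then dualize back to obtain strong $(p,p)$ for all $p\in(p_0,\infty)$. Combined with Step 1, this gives strong $(p,p)$ for all $p\in(1,\infty)$.
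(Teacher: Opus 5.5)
Your Step 1 is the standard Calder\'on--Zygmund argument with absolute values replaced by norms, and it is fine. The paper gives no proof of its own; it only cites Theorem 5.17 of Duoandikoetxea. The standard argument behind that result is your Step 1 together with duality for $p>p_0$, which is your fallback.

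Your primary plan for Step 2 does fail, and not for a bookkeeping reason. The far-field term is controlled only by
\[
\int_{(Q^*)^c}\norm{K(z,w)-K(x_Q,w)}_{\bddlin(A,B)}\norm{f(w)}_A\dd{w},
\]
and an $L^1$ bound on the kernel difference pairs only with $\norm{f}_{L^\infty(A)}$. A bound by $(M(\norm{f}_A^{p_0}))^{1/p_0}$ would need an $L^{p_0'}$-H\"ormander condition on dyadic annuli, which is strictly stronger than what is assumed.

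The sharp-function idea does close if you take $r=\infty$. Put $M^{\#}_B h(x)=\sup_{Q\ni x}\abs{Q}^{-1}\int_Q\norm{h-h_Q}_B$. It is sublinear in $h$ and satisfies $M^{\#}(\norm{h}_B)\le 2M^{\#}_B h\le 4M(\norm{h}_B)$. Your local/far splitting, with the second hypothesis used at $y=x_Q$, $y'=z$, gives $M^{\#}_B(Tf)\lesssim\norm{f}_{L^\infty(A)}$. Now interpolate $f\mapsto M^{\#}_B(Tf)$ between $L^{p_0}$ and $L^\infty$, and apply Fefferman--Stein to $\norm{Tf}_B$, which is a priori in $L^{p_0}$. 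This yields strong $(p,p)$ for $p_0<p<\infty$. It uses no dual spaces and only the kernel representation of $T$ itself.

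If you keep duality instead, locate the obstacle correctly. The $B$ side is harmless, as you say, since simple $B^*$-valued functions norm $L^p(B)$. The real issue is on the $A$ side. For simple $g$, $T'g$ is a priori only an element of $L^{p_0}(A)^*$. That space coincides with $L^{p_0'}(A^*)$ only when $A^*$ has the Radon--Nikod\'ym property, so your claim that $T^*$ maps $L^{p'}(B^*)$ into $L^{p'}(A^*)$ is not justified as stated.

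Separability of $A$ is what rescues this: it lets you represent $T'g$ by a weak$^*$-measurable $A^*$-valued function with $\norm{T'g(\dummydot)}_{A^*}\in L^{p_0'}$. Step 1 must then be run for the sublinear map $g\mapsto\norm{T'g(\dummydot)}_{A^*}$, using weak$^*$ integrals against $K(y,x)^*$. Identifying that kernel off the support also requires a Fubini interchange that the hypotheses leave tacit.

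With these adjustments your fallback is the standard proof. In the paper's application, $A=B=L^p_{x,t}$ with $1<p<\infty$ is reflexive, so none of these subtleties arise.
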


	Let $1<p<\infty$ and $i=1,2$.
	Setting
	\begin{align}
		(\tilde{T_i}(y,y')f)(x,t)
		=\int_{\bbR^{d+1}}K_i(x,y,t,x',y',t')f(x',t')\dd{x'}\dd{t'},
	\end{align}
	we can write
	\begin{equation}
		T_if(z)=\int_{\bbR^d}
		(\tilde{T_i}(y,y')f(\dummydot,y',\dummydot))(x,t)\dd{y'}.
	\end{equation}
	Since its $L^{p}$-boundedness is already proved,
	it is enough to check the standard estimate, i.e.,
	\begin{equation}
		\int_{\abs{y-y'}\ge c_1\abs{y'-y''}}
		\norm{\tilde{T_i}(y,y')-\tilde{T_i}(y,y'')}_{L^p_{x,t}\to L^p_{x,t}}
		\dd{y}<C
	\end{equation}
	for some constant $C$ independent of $y'$ and $y''$.
	When $\abs{y-y'}\ge c_1\abs{y'-y''}$,
	the inequality 
	$\norm{(z')\inv \circ z}\ge c_1\norm{(x',y'',t')\inv \circ z'}$
	holds and thus
	\begin{equation}
		\abs{\Gamma_1(z,z')-\Gamma_1(z,(x',y'',t'))}
		\lesssim \frac{\abs{y-y''}}{\norm{(z')\inv\circ z}^{4d+3}}.
	\end{equation}
	The Shur test is useful now.
	Integrating
	\begin{equation}
		\norm*{\frac{\abs{y-y''}}{\norm{(z')\inv\circ z}^{4d+3}}}_{L_x^1}
		\lesssim \frac{\abs{y-y'}}{
			(\abs{y'-y''}+\abs{t-t'}^{1/2})^{3(d/3+1)}
		}
	\end{equation}
	with respect to $t$, we get
	\begin{equation}\label{eq:difference of Gamma1 anisotropic}
		\norm*{\Gamma_1(z,z')-\Gamma_1(z,(x',y'',t'))}_{L_{x,t}^1}
		\lesssim \frac{\abs{y'-y''}}{\abs{y-y'}^{d+1}}.
	\end{equation}
	The estimate of adjoint version is shown by similar calculation,
	i.e.,
	\begin{equation}\label{eq:difference of adjoint Gamma1 anisotropic}
		\norm*{\Gamma_1(z,z')-\Gamma_1(z,(x',y'',t'))}_{L_{x',t'}^1}
		\lesssim \frac{\abs{y'-y''}}{\abs{y-y'}^{d+1}}.
	\end{equation}
	The estimates \eqref{eq:difference of Gamma1 anisotropic} and 
	\eqref{eq:difference of adjoint Gamma1 anisotropic} yield that
	\begin{equation}
		\norm{\tilde{T_1}(y,y')-\tilde{T_1}(y,y'')}_{L^p_{x,t}\to L^p_{x,t}}
		\lesssim \frac{\abs{y'-y''}}{\abs{y-y'}^{d+1}}
	\end{equation}
	for $p\in \sqb{1,\infty}$
	where $\abs{y-y'}\ge c_1\abs{y'-y''}$.
	The adjoint of $\tilde{T_1}$ defined as
	$\tilde{T_1}^*(y,y')=\tilde{T_1}(y',y)$ also enjoys the same estimate.

	Let us prove the same estimate for $T_3$, which implies 
	the estimate for the better operator $T_2$.
	\begin{align}\MoveEqLeft
		\abs{\Gamma_3(z,z')-\Gamma_3(z,(x',y'',t'))}\\
		& \le \abs{y'-y''}
		\int_0^1
		\abs{t-t'}\abs{\nabla_x\gamma_3(
		(x',y'+\theta(y''-y'),t')\inv \circ z)}
			+\abs{\nabla_y\gamma_3((x',y'+\theta(y''-y'),t')\inv \circ z)}
			\dd{\theta}\\
		& \lesssim
		\int_0^1
		\frac{\abs{y'-y''}}{
		\norm{(x',y'+\theta(y''-y'),t')\inv \circ z}^{3d+1}}
		\frac{1}{(\abs{y-(y'+\theta(y''-y'))}+\abs{t-t'}^{1/2})^{d+2}
		}\dd{\theta}\\
		& \lesssim
		\frac{\abs{y'-y''}}{(\abs{y-y'}+\abs{t-t'}^{1/2})^{d+2}}
		\int_0^1\frac{1}{
		\norm{(x-x'-(t-t')(y'+\theta(y''-y')),y-y',t-t')}^{3d+1}
		}\dd{\theta}
	\end{align}
	and thus by Fubini's theorem,
	\begin{align}\MoveEqLeft
		\int_{\bbR^{d+1}}\abs{\Gamma_3(z,z')-\Gamma_3(z,(x',y'',t'))}
		\dd{x}\dd{t} \\
		&\lesssim 
			\int_{\bbR}
			\frac{\abs{y'-y''}}{(\abs{y-y'}+\abs{t-t'}^{1/2})^{d+2}}
			\int_0^1\int_{\bbR^d}\frac{1}{
			\norm{(x,y-y',t-t')}^{3d+1}
			}\dd{x}\dd{\theta}\dd{t}
		\\
		&\qquad(x=(\abs{y-y'}+\abs{t-t'}^{1/2})^{3}\xi)\\
		& \lesssim \int_{\bbR^d}\frac{1}{(\abs{\xi}+1)^{d+1/3}}\dd{\xi}
		\int_{\bbR}
			\frac{\abs{y'-y''}}{(\abs{y-y'}+\abs{t-t'}^{1/2})^{d+3}}\dd{t} \\
		&\lesssim\frac{\abs{y'-y''}}{\abs{y-y'}^{d+1}}
		\int_{\bbR^d}\frac{1}{(\abs{\tau}^{1/2}+1)^{d+3}}\dd{\tau}
		\qquad (t-t'=\abs{y-y'}^2\tau).
	\end{align}
	The estimates for $T_2^*$ and $T_3^*$ 
	we need are shown in the similar way.

	Finally we consider the estimate for $T_4$.
	We have 
	\begin{align}\MoveEqLeft
		\abs{\Gamma_4(z,z')-\Gamma_4(z,(x',y'',t'))} \\
		 & \lesssim
		\abs{y'-y''}\int_0^1
		\abs{t-t'}\abs{\nabla_x\gamma_4((x',y'+\theta(y''-y'),t')\inv \circ z)}
		\\
		&\qquad 
		+\abs{\nabla_y\gamma_4((x',y'+\theta(y''-y'),t')\inv \circ z)}
		\dd{\theta}\\
		&\lesssim\abs{y'-y''}\int_0^1
		 \frac{1}{\norm{(x',y'+\theta(y''-y'),t')\inv \circ z}^{3d+3-\nu}}
		\cdot \frac{1}{\abs{y-(y'+\theta(y''-y'))}^{d+\nu}}
		\dd{\theta}
	\end{align}
	for any $\nu>0$. Let $\nu$ be small enough.
	The difference
	$\abs{\Gamma_4(z,z')-\Gamma_4(z,(x',y'',t'))}$
	is integrable with respect to $(x, t)$, and 
	\begin{equation}
		\norm{\Gamma_4(z,z')-\Gamma_4(z,(x',y'',t'))}_{L^1_{x,t}}
		\lesssim \int_0^1
			\frac{\abs{y'-y''}}{\abs{y-(y'+\theta(y''-y'))}^{d+1}}\dd{\theta}
		\lesssim \frac{\abs{y'-y''}}{\abs{y-y'}^{d+1}}
	\end{equation}
	holds if $\abs{y-y'}\ge 2\abs{y''-y'}$.
	Similar computation yields the estimate for $T_4^*$,
	and consequently we have Theorem \ref{thm:main result L}.

\section{Estimates for solutions to stationary problem}
	\label{sec:proof for stationary solutions}

	In this section, we shall prove the main result of this paper
	for the stationary problem.
	Take a function $f\in C_{\mathrm{c}}^{\infty}(\bbR^{2d})$ and
	Let $\chi\colon\bbR\to \bbR$ be a smooth cut-off function
	with compact support
	such that $\chi(t)=1$ if $\abs{t}\le 2$.
	Define $\chi_R$ as $\chi_R(t)=\chi(t/R)$ for $R>0$.
	Let $u_R$ denote the solution of $Lu_R=f(x,y)\chi_R(t)$
	defined by
	\begin{equation}
		u_R(x,y,t)
		=\int_{\bbR^{2d+1}}
		f(x',y')\chi_R(t')\gamma((z')\inv\circ z)\dd{z'}
	\end{equation}
	and write
	\begin{equation}
		u_{\infty}(x,y)
		=\int_{\bbR^{2d+1}}f(x',y')\gamma((z')\inv\circ z)\dd{z'}.
	\end{equation}
	Then $u_{\infty}$ is approximated by the $R$-average
	\begin{equation}
		U_R(x,y)=\frac{1}{2R}\int_{-R}^{R}
		u_R(x,y,t)\dd{t}
	\end{equation}
	in $L^\infty(\bbR^{2d})$. Indeed, 
	\begin{align}\MoveEqLeft
		\sup_{\abs{t}\le R}
		\abs{u_R(x,y,t)-u_{\infty}(x,y)}\\
		& \lesssim \sup_{\abs{t}\le R}
		\int \frac{1}{\norm{(x,y,t)\circ (x',y',t')\inv}^{4d}}
		\abs{f(x',y')}\prn{1-\chi_R(t')}\dd{z'}\\
		& \lesssim \norm{f}_{L^1}\sup_{\abs{t}\le R}
		\int_{\abs{t-t'}\ge R}\frac{1}{\abs{t-t'}^{2d}}\dd{t'}
		\lesssim\frac{\norm{f}_{L^1}}{R^{2d-1}}
		\label{al:u_infty estimate}
	\end{align}
	and hence
	\begin{equation}
		\abs{U_R(x,y)-u_{\infty}(x,y)}
		\le \frac{1}{2R}\int_{-R}^R \abs{u_R(x,y,t)-u_{\infty}(x,y)}\dd{t}
		\lesssim \frac{\norm{f}_{L^1}}{R^{2d-1}}.
	\end{equation}
	Since $u_R$ belongs to $L^{\infty}(\bbR^{2d+1})$ as 
	we proved in Proposition \ref{prop:Lp estimate for lower order term},
	the estimate \eqref{al:u_infty estimate} 
	also gives $u_{\infty}\in L^{\infty}(\bbR^{2d})$.

	Next, we prove the estimate 
	$\norm{\laplacian_{y}u_{\infty}}_{L_y^{q}L_{x}^{p}}
		\lesssim \norm{f}_{L_y^{q}L_{x}^{p}}$.
	Observe that 
	\begin{equation}
		\laplacian_yU_R=\frac{1}{2R}\int_{-R}^R \laplacian_yu_R \dd{t}.
	\end{equation}
	Indeed, for any test function $\phi\in C\sub{c}^{\infty}(\bbR^{2d})$, 
	\begin{align}
		\agb{\laplacian_yU_R,\phi}
		&=\agb{U_R,\laplacian_y\phi}
		=\frac{1}{2R}\int_{-R}^R \agb{u_R,\laplacian_y\phi} \dd{t}\\
		&=\frac{1}{2R}\int_{-R}^R \agb{\laplacian_y u_R,\phi}\dd{t}
		=\agb*{\frac{1}{2R}\int_{-R}^R\laplacian_y u_R\dd{t},\phi}
	\end{align}
	due to the fact $u_R,\laplacian_yu_R\in L^2(\bbR^{2d+1})$ and 
	Fubini's theorem.
	Again, take a test function $\phi\in C\sub{c}^{\infty}(\bbR^{2d})$.
	We have
	\begin{equation}
		\agb{\laplacian_yu_{\infty},\phi}
		=\agb{u_{\infty},\laplacian_y\phi}
		=\lim_{R\to\infty}\agb{U_R,\laplacian_y\phi}
		=\lim_{R\to\infty}\agb{\laplacian_yU_R,\phi}
	\end{equation}
	but H\"{o}lder's inequality yields
	\begin{equation}
		\norm{\laplacian_yU_R}_{L_y^{q}L_{x}^{p}}
		\lesssim \frac{1}{R^{1/p}} \norm{\laplacian_yu_R}_{L_y^{q}L_{x,t}^{p}}
		\lesssim \frac{1}{R^{1/p}} \norm{f\chi_R}_{L_y^{q}L_{x,t}^{p}}
		\lesssim \norm{f}_{L_y^{q}L_{x}^{p}}
	\end{equation}
	and therefore
	$\norm{\laplacian_{y}u_{\infty}}_{L_y^{q}L_{x}^{p}}
		\lesssim \norm{f}_{L_y^{q}L_{x}^{p}}$.
	Similarly we obtain the estimate
	\begin{equation}
		\norm{\abs{\nabla_x}^{2/3}u_{\infty}}_{L_y^{q}L_{x}^{p}}
		+\norm{\abs{\nabla_x}^{1/3}\nabla_yu_{\infty}}_{L_y^{q}L_{x}^{p}}
		\lesssim \norm{f}_{L_y^{q}L_{x}^{p}}.
	\end{equation}

	Finally, we claim that $u_{\infty}$ is a weak solution of $-\calA{u}=f$.
	The approximating function $U_R$ satisfies
	\begin{align}
		\laplacian_yU_R-y\cdot \nabla_x{U_R}
		& =\frac{1}{2R}\int_{-R}^R \prn{\partial_tu_R-f\chi_R}\dd{t} \\
		& =\frac{u_R(\dummydot,\dummydot,R)-u_R(\dummydot,\dummydot,-R)}{2R}-f.
	\end{align}
	Since $\abs{u_R(\dummydot,\dummydot,\pm R)}$ is	uniformly bounded,
	taking the limit yields that $-\calA{u_{\infty}}=f$ holds in the weak sense.
	
	\bigskip
	\noindent
	\textbf{Acknowledgement}:
	The author expresses my deepest gratitude to my academic advisor,
	Professor \relax{Yasunori Maekawa}, whose expert guidance and patience were
	critical in the development of this work.
	His unwavering dedication to academic excellence
	inspired me throughout.

	\bigskip
	This research did not receive any specific grant from funding agencies 
	in the public, commercial, or not-for-profit sectors.

\bibliographystyle{abbrv}
\bibliography{bib2025a}
\end{document}